\newtheorem{theorem}{Theorem}
\newtheorem{corollary}[theorem]{Corollary}
\newtheorem{lemma}[theorem]{Lemma}
\newtheorem{proposition}[theorem]{Proposition}
\theoremstyle{definition}
\newtheorem{definition}[theorem]{Definition}
\theoremstyle{remark}
\numberwithin{theorem}{section}
\numberwithin{figure}{section}
\numberwithin{equation}{section}
\begin{document}

\title{Multiple Ising interfaces in annulus and $2N$-sided radial SLE}
\bigskip{}
\author[1]{Yu Feng\thanks{yufeng\_proba@163.com}}
\author[1]{Hao Wu\thanks{hao.wu.proba@gmail.com. Supported by Beijing Natural Science Foundation (JQ20001).}}
\author[1]{Lu Yang\thanks{luyang\_proba@126.com}}
\affil[1]{Tsinghua University, China}

\date{}

%
%

\global\long\def\dist{\mathrm{dist}}
\global\long\def\SLE{\mathrm{SLE}}
\global\long\def\PPrSLE3{\PP_{\mathrm{rSLE(3)}}}
\global\long\def\PPcSLE3{\PP_{\mathrm{cSLE(3)}}}
\global\long\def\PPIsing{\PP_{\mathrm{Ising}}}
\global\long\def\diam{\mathrm{diam}}
\global\long\def\free{\mathrm{free}}
\global\long\def\edge#1#2{\langle #1,#2 \rangle}
\global\long\def\ns{\mathrm{ns}}
\global\long\def\total{\mathrm{Total}}
\global\long\def\Ising{\mathrm{Ising}}
\global\long\def\thetaf{\vartheta}
\global\long\def\eps{\epsilon}
\global\long\def\doublemap{\tilde{\omega}}
\global\long\def\doubledom{\tilde{\Omega}}
\global\long\def\mQ{\mathbb{Q}_{\mathrm{m}}}
\global\long\def\mP{\mathbb{P}_{\mathrm{m}}}
\global\long\def\mE{\mathbb{E}_{\mathrm{m}}}
\global\long\def\conf{\mathrm{Conf}}
\global\long\def\good{\mathrm{E}}
\global\long\def\totalH{\mathcal{Z}}
\global\long\def\totalD{\mathcal{G}_{\mathrm{Total}}}
\global\long\def\U{\mathbb{U}}
\global\long\def\partiIsing{\mathcal{H}}
\global\long\def\confdis{\mathrm{d}^{(a,2N)}_{\mathrm{conf}}}
\global\long\def\bcint{\mathrm{B}_{\mathrm{int}}}
\global\long\def\bcintone{\mathrm{B}}
\global\long\def\PIsing{\mathbb{P}}
\global\long\def\EIsing{\mathbb{E}}
\global\long\def\Z{\mathbb{Z}}
\global\long\def\T{\mathbb{T}}
\global\long\def\HH{\mathbb{H}}
\global\long\def\LA{\mathcal{A}}
\global\long\def\LB{\mathcal{B}}
\global\long\def\LC{\mathcal{C}}
\global\long\def\LD{\mathcal{D}}
\global\long\def\LF{\mathcal{F}}
\global\long\def\LK{\mathcal{K}}
\global\long\def\LE{\mathcal{E}}
\global\long\def\LG{\mathcal{G}}
\global\long\def\LI{\mathcal{I}}
\global\long\def\LJ{\mathcal{J}}
\global\long\def\LL{\mathcal{L}}
\global\long\def\LM{\mathcal{M}}
\global\long\def\LN{\mathcal{N}}
\global\long\def\LQ{\mathcal{Q}}
\global\long\def\LR{\mathcal{R}}
\global\long\def\LT{\mathcal{T}}
\global\long\def\LS{\mathcal{S}}
\global\long\def\LU{\mathcal{U}}
\global\long\def\LV{\mathcal{V}}
\global\long\def\LW{\mathcal{W}}
\global\long\def\LX{\mathcal{X}}
\global\long\def\LY{\mathcal{Y}}
\global\long\def\PartF{\mathcal{Z}}
\global\long\def\LH{\mathcal{H}}
\global\long\def\R{\mathbb{R}}
\global\long\def\C{\mathbb{C}}
\global\long\def\N{\mathbb{N}}
\global\long\def\Z{\mathbb{Z}}
\global\long\def\E{\mathbb{E}}
\global\long\def\PP{\mathbb{P}}
\global\long\def\QQ{\mathbb{Q}}
\global\long\def\A{\mathbb{A}}
\global\long\def\one{\mathbb{1}}
\global\long\def\chamber{\mathfrak{X}}
\global\long\def\metric{\mathrm{Dist}}
\global\long\def\cond{\,|\,}
\global\long\def\la{\langle}
\global\long\def\ra{\rangle}
\global\long\def\outb{\partial_{\mathrm{o}}}
\global\long\def\intb{\partial_{\mathrm{i}}}
\global\long\def\Riem{\mathbb{C}_{\infty}}
\global\long\def\Im{\operatorname{Im}}
\global\long\def\Re{\operatorname{Re}}
\global\long\def\Pfree{\mathbb{P}_{\mathrm{Free}}}
\global\long\def\Efree{\mathbb{E}_{\mathrm{Free}}}

\global\long\def\ud{\mathrm{d}}

\global\long\def\ii{\mathfrak{i}}
\global\long\def\rr{\mathfrak{r}}
\global\long\def\cc{\mathfrak{c}}

\newcommand{\armexp}{A_{2N}}
\newcommand{\cvgexp}{B_{2N}}

\newcommand{\unitD}{\mathbb{D}}
\global\long\def\LP{\mathrm{LP}}

\global\long\def\bs{\boldsymbol}
\maketitle
\vspace{-1cm}
\begin{center}
\begin{minipage}{0.95\textwidth}
\abstract{We consider critical planar Ising model in annulus with alternating boundary conditions on the outer boundary and free boundary conditions in the inner boundary. As the size of the inner hole goes to zero, the event that all interfaces get close to the inner hole before they meet each other is a rare event. We prove that the law of the collection of the interfaces conditional on this rare event converges in total variation distance to the so-called $2N$-sided radial SLE$_3$, introduced by~\cite{HealeyLawlerNSidedRadialSLE}. The proof relies crucially on an estimate for multiple chordal SLE. Suppose $(\gamma_1, \ldots, \gamma_N)$ is chordal $N$-SLE$_{\kappa}$ with $\kappa\in (0,4]$ in the unit disc, and we consider the probability that all $N$ curves get close to the origin. We prove that the limit $\lim_{r\to 0+}r^{-A_{2N}}\PP[\dist(0,\gamma_j)<r, 1\le j\le N]$ exists, where $A_{2N}$ is the so-called $2N$-arm exponents and $\dist$ is Euclidean distance. We call the limit Green's function for chordal $N$-SLE$_{\kappa}$. This estimate is a generalization of previous conclusions with $N=1$ and $N=2$ proved in~\cite{LawlerRezaeiNaturalParameterization, LawlerRezaeiMinkowskiContent} and~\cite{ZhanGreen2SLE} respectively.}

\noindent\textbf{Keywords:} Ising model, $2N$-sided radial SLE, Green's function for multiple SLE \\ 
\noindent\textbf{MSC:} 60J67, 82B20
\end{minipage}
\end{center}

\tableofcontents

\newpage
\section{Introduction}
\label{sec::intro}
The Ising model is one of the most studied lattice model in statistical physics. 
Due to recent celebrated work of Chelkak and Smirnov~\cite{ChelkakSmirnovIsing}, it is proved that, at the critical temperature, the planar Ising interface is conformally invariant in the scaling limit. 
Based on the conformal invariance, one is able to identify the scaling limit of Ising interface in simply connected domains as Schramm Loewner evolution $\SLE_3$~\cite{CDCHKSConvergenceIsingSLE}. 
In this article, we consider critical planar Ising model in annulus.

In~\cite{IzyurovIsingMultiplyConnectedDomains}, Izyurov systematically studied the scaling limit of Ising interface in finitely connected domains. 
In our article, we study the Ising interface in annulus. Our setup is similar to that of~\cite{IzyurovIsingMultiplyConnectedDomains}, but we focus on a different aspect of the Ising interfaces. We  denote standard annulus as follows: 
\begin{align*}
\A_p:=\{z\in \C:e^{-p}<|z|<1\},\quad \text{for }p>0. 
\end{align*}
Suppose $x_1, \ldots, x_{2N}$ are distinct boundary points lying on the unit circle $\partial\unitD$ in counterclockwise order. Suppose $(\LA^{\delta}; x_1^{\delta}, \ldots, x_{2N}^{\delta})$ is an approximation of $(\A_p; x_1, \ldots, x_{2N})$ and we consider critical Ising model in $\LA^{\delta}$ with the following boundary conditions: $\oplus$ on the boundary arc $(x_{2j-1}^{\delta}x_{2j}^{\delta})$ and $\ominus$ on the boundary arc $(x_{2j}^{\delta}x_{2j+1}^{\delta})$ for $j\in\{1, \ldots, N\}$ and free in the inner hole, see precise definition in Proposition~\ref{prop::joint_Ising_annulus}. We consider the Ising interfaces $\eta_1^{\delta}, \ldots, \eta_{2N}^{\delta}$ starting from $x_1^{\delta}, \ldots, x_{2N}^{\delta}$ respectively. 
Using techniques from~\cite{IzyurovIsingMultiplyConnectedDomains} and~\cite{CHI22}, one is able to derive that the joint distribution of $(\eta_1^{\delta}, \ldots, \eta_{2N}^{\delta})$ has a scaling limit. Suppose $(\eta_1, \ldots, \eta_{2N})$ is the scaling limit, these interfaces may meet each other or hit the inner hole, see Figure~\ref{fig::IsingAnnulus}. We are interested in the case when all $2N$ interfaces hit the inner hole. As $p\to \infty$, the event that all $2N$ interfaces hit the inner hole before they meet each other is a  rare event and has small probability. We will derive the asymptotic of the probability of this rare event as $p\to \infty$, see~\eqref{eqn::IsingAsy}. Then, we consider the law of $(\eta_1, \ldots, \eta_{2N})$ conditional on this rare event, our main conclusion is that, this conditional law converges to the so-called $2N$-sided radial $\SLE_3$ introduced by~\cite{HealeyLawlerNSidedRadialSLE} as $p\to\infty$, see Theorem~\ref{thm::Ising_annulus_radialSLE}. 

\begin{figure}[ht!]
\begin{center}
\begin{subfigure}[b]{0.13\textwidth}
\begin{center}
\includegraphics[width=\textwidth]{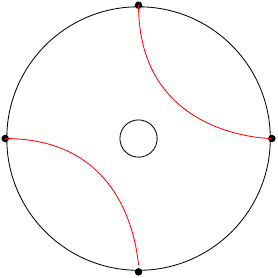}
\end{center}
\caption{}
\end{subfigure}
\begin{subfigure}[b]{0.13\textwidth}
\begin{center}
\includegraphics[width=\textwidth]{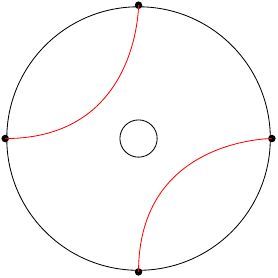}
\end{center}
\caption{}
\end{subfigure}
\begin{subfigure}[b]{0.13\textwidth}
\begin{center}
\includegraphics[width=\textwidth]{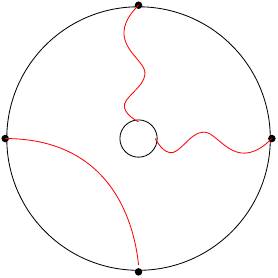}
\end{center}
\caption{}
\end{subfigure}
\begin{subfigure}[b]{0.13\textwidth}
\begin{center}
\includegraphics[width=\textwidth]{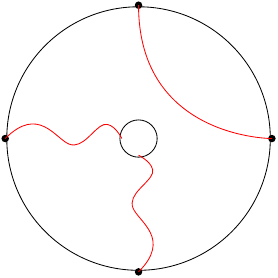}
\end{center}
\caption{}
\end{subfigure}
\begin{subfigure}[b]{0.13\textwidth}
\begin{center}
\includegraphics[width=\textwidth]{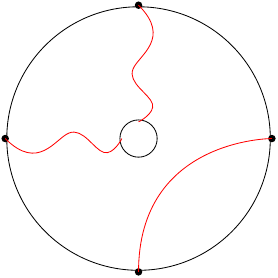}
\end{center}
\caption{}
\end{subfigure}
\begin{subfigure}[b]{0.13\textwidth}
\begin{center}
\includegraphics[width=\textwidth]{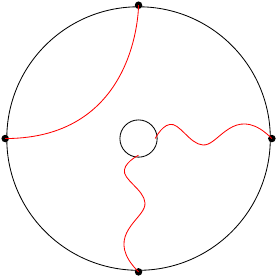}
\end{center}
\caption{}
\end{subfigure}
\begin{subfigure}[b]{0.13\textwidth}
\begin{center}
\includegraphics[width=\textwidth]{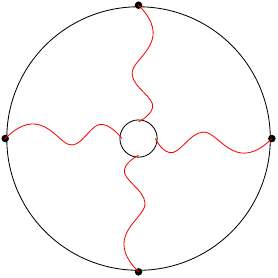}
\end{center}
\caption{}
\end{subfigure}
\end{center}
\caption{\label{fig::IsingAnnulus}
Consider critical Ising interfaces in annular polygon $(\mathbb{A}_p; x_1, x_2, x_3, x_4)$ with boundary conditions: $\oplus$ on $(x_1x_2)\cup(x_3x_4)$ and $\ominus$ on $(x_2x_3)\cup(x_4x_1)$ and free on the inner hole. There are four interfaces starting from $x_1, x_2, x_3, x_4$ respectively. 
There are seven possibilities for the connectivity of interfaces: in~(a) and~(b), four interfaces meet each other; in~(c)-(f), two interfaces meet each other and the other two interfaces connect the marked points to the inner hole; in~(g), all four interfaces hit the inner hole before they meet each other. In this article, we focus on the scenario in (g) and show that, conditional on this rare event, the law of the collection of the four interfaces converges to 4-sided radial $\SLE_3$.}
\end{figure}

\subsection{Convergence of Ising interfaces to radial SLE} 
\label{subsec::Ising_intro}

We say that a bounded planar domain $\mathcal{A}$ is an \textit{annular domain} if the following 3 conditions are satisfied: (1): the complement of $\mathcal{A}$ in the Riemann sphere $\C\cup\{\infty\}$ has two connected components and both of them contain more than one point; (2): we denote by $D_{\mathrm{i}}$ the bounded connected component of $\C\setminus \mathcal{A}$; the boundary of $D_{\mathrm{i}}$ is a smooth curve and we denote $\intb:=\partial D_{\mathrm{i}}$ and call it the \textit{inner boundary} of $\mathcal{A}$; (3): the boundary of the simply connected domain $\mathcal{A}\cup D_{\mathrm{i}}$ is locally connected, and we denote $\outb:=\partial(\mathcal{A}\cup D_{\mathrm{i}})$ and call it the \textit{outer boundary} of $\mathcal{A}$. 
For $N\ge 1$, we say $(\mathcal{A};x_1,\ldots,x_{2N})$ is an \textit{annular polygon} if $\mathcal{A}$ is an annular domain and $x_1,\ldots x_{2N}$ are distinct  points lying on $\outb$ in counterclockwise order. We denote by $(x_1x_2)$ the boundary arc from $x_1$ to $x_2$ in counterclockwise order along $\outb$.
We usually write $x_{2N+1}=x_1$ and $x_0=x_{2N}$ by convention.
In this article, we consider Ising model in annular domains.

\paragraph*{Ising model.}
For definiteness, we consider subgraphs of the square lattice $\Z^2$, which is the graph with vertex set $V(\Z^2):=\{ z = (m, n) \colon m, n\in \Z\}$ and edge set $E(\Z^2)$ given by edges between 
those vertices whose Euclidean distance equals one (called neighbors). 
This is our primal lattice. Its dual lattice is denoted by $(\Z^2)^{\bullet}$. 
For a subgraph $G \subset \Z^2$ (resp.~of $(\Z^2)^{\bullet}$), we define its boundary to be the following set of vertices:
$\partial G = \{ z \in V(G) \, \colon \, \exists \; w \not\in V(G) \text{ such that }\edge{z}{w}\in E(\Z^2)\}$.

Suppose that $G=(V(G),E(G))$ is a finite subgraph of $\mathbb{Z}^2$.  
The \textit{Ising model} on $G$ is a random assignment $\sigma=(\sigma_v)_{v\in V(G)}\in \{\ominus,\oplus\}^{V(G)}$ of spins. The boundary condition $\tau$ is specified by three disjoint subsets $\{\oplus\}$, $\{\ominus\}$ and $\{\mathrm{free}\}$ of $\partial G$, each containing an arbitrary number of arcs.
With boundary condition $\tau$,  and inverse-temperature $\beta>0$, the probability measure of the Ising model is given by
\[\mu_{\beta,G}^{\tau}[\sigma]=\frac{\exp(\beta \sum_{v\sim w}\sigma_v\sigma_w)}{Z_{\beta,G}^{\tau}},\quad Z_{\beta,G}^{\tau}:=\sum_{\sigma}\exp(\beta \sum_{v\sim w}\sigma_v\sigma_w) \]
where the sum in the exponential is taken over the set of edges $\edge{v}{w}\in E(G)$, except for those edges that belong to the $\{\mathrm{free}\}$ arcs in $\tau$, and where the spins at vertices of $\{\oplus\}$ and $\{\ominus\}$ arcs are assumed to be non-random and are equal to $\oplus$ and $\ominus$, respectively.
In this article, we focus on Ising model with critical inverse-temperature
$\beta=\beta_c:=\frac{1}{2}\log(1+\sqrt{2})$. 

When we add the subscript or superscript $\delta$, we mean that subgraphs of the lattices $\Z^2, (\Z^2)^{\bullet}$ have been scaled by $\delta > 0$. We consider the models in the scaling limit $\delta \to 0$. 

\paragraph*{Boundary conditions.}  
For a given dual graph $\mathcal{A}^{\delta, \bullet} \subset (\delta \Z^2)^{\bullet}$,
let $\mathcal{A}^{\delta}\subset\delta\Z^2$ be the graph on the primal lattice corresponding to $\mathcal{A}^{\delta, \bullet}$.  
By a {(discrete) annular polygon} we either refer to a finite doubly connected graph $\mathcal{A}^{\delta, \bullet}$ endowed with given distinct points $x_1^{\delta, \bullet}, \ldots, x_{2N}^{\delta, \bullet}$ on the outer boundary of $\mathcal{A}^{\delta,\bullet}$  in counterclockwise order,
or to the corresponding primal graph  
$(\mathcal{A}^{\delta}; x_1^{\delta}, \ldots, x_{2N}^{\delta})$ 
with given boundary points $x_1^{\delta}, \ldots, x_{2N}^{\delta}$ on the outer boundary of $\mathcal{A}^{\delta}$ in counterclockwise order, see more detail in the  beginning of Section~\ref{sec::Ising_model}. We denote by $\intb^{\delta}$ the inner boundary of $\mathcal{A}^{\delta,\bullet}$. 
Consider the Ising model on a dual annular polygon $(\mathcal{A}^{\delta,\bullet};x_{1}^{\delta,\bullet},\ldots,x_{2N}^{\delta,\bullet})$ with the following boundary conditions: 
	\begin{align} \label{eqn::bounda_condi}
	\oplus\text{ on }\cup_{j=1}^N(x_{2j-1}^{\delta,\bullet}x_{2j}^{\delta,\bullet}),\quad \ominus\text{ on }\cup_{j=1}^{N}(x_{2j}^{\delta,\bullet} x_{2j+1}^{\delta,\bullet}),\quad \text{and}\quad \text{$\free$ on } \intb^{\delta}.
	\end{align}
For each $j\in \{1,2\ldots, 2N\}$, let $\eta_j^\delta$ be the interface starting from $x_j^\delta$. The interface $\eta_j^\delta$ may terminate at some marked points or terminate in the free boundary component $\intb^\delta$.

\paragraph*{Scaling limits.} 
	To specify in which sense the convergence as $\delta \to 0$ should take place, we need a notion of convergence of  polygons. In contrast to the commonly used Carath\'{e}odory convergence of planar sets, we need a slightly stronger notion termed close-Carath\'{e}odory convergence, see the beginning of  Section~\ref{sec::Ising_model} for precise definition.
	Roughly speaking, the usual Carath\'{e}odory convergence allows wild behavior of the boundary approximations, 
	while in order to obtain precompactness of the random interfaces, a slightly stronger convergence which guarantees good approximations around the marked boundary points is required.

	We also need a topology for the interfaces. 
	We denote by $X$ the set of continuous mappings from $[0,1]$ to $\C$ modulo reparameterization, i.e., planar oriented curves. We endow $X$ with metric
	\begin{align} \label{eq::curve_metric_1} 
		\metric(\eta, \tilde{\eta}):=
		\inf_{\psi, \tilde{\psi}} \sup_{t\in[0,1]} |\eta(\psi(t))-\tilde{\eta}(\tilde{\psi}(t))| ,
	\end{align}
	where  the  infimum  is   taken  over  all  increasing  homeomorphisms $\psi, \tilde{\psi} \colon [0,1]\to [0,1]$. 
	Then,  the metric space $(X, \metric)$ is
	complete and separable (see e.g.,~\cite[Section~3.3.1]{KarrilaMultipleSLELocalGlobal}). For $N\geq 1$, we endow $X^{2N}$ with the metric
	\begin{equation}\label{eq::curve_metric_2} 
		\metric\left(\left(\eta_j\right)_{1\leq j\leq 2N}, \left(\tilde{\eta}_j\right)_{1\leq j\leq 2N}\right):=\sup_{1\leq j\leq 2N}\metric\left(\eta_j,\tilde{\eta}_j\right).
	\end{equation}
	Then the metric space $(X^{2N},\metric)$ is also complete and separable.

\paragraph*{Radial Loewner chain.}
To describe scaling limits of interfaces, we recall the notions of radial Loewner chain following the convention in~\cite{HealeyLawlerNSidedRadialSLE}. In this article, we only consider radial Loewner chain which is either a simple curve or a finite union of disjoint simple curves.

Fix $a>0$. Fix $\theta\in [0,\pi)$. 
Suppose $\eta: [0, T]\to \overline{\unitD}$ is a continuous simple curve such that $\eta(0)=z=\exp(2\ii \theta)$ and $\eta(0,T)\subset\unitD \setminus \{0\}$. Let $D_t$ be the connected component of $\unitD\setminus\eta([0,t])$ containing the origin. Let $g_t: D_t\to\unitD$ be the unique conformal map with $g_t(0)=0$ and $g_t'(0)>0$. We say that the curve is parameterized by radial capacity if $g_t'(0)=\exp(2at)$. Then $g_t$ satisfies
\[\partial_t g_t(w)=2ag_t(w)\frac{\exp(2\ii \xi_t)+g_t(w)}{\exp(2\ii \xi_t)-g_t(w)},\quad g_0(w)=w,\quad \forall w\in \unitD\setminus\eta[0,t].\]
 Radial $\SLE_{\kappa}$ is the radial Loewner chain with $\xi_t=B_t$ which is a standard Brownian motion and $\kappa=2/a$. 
 
Next, we introduce notations for $2N$-tuple radial curves. 
For $\boldsymbol{t}=(t_1, \ldots, t_{2N})\in\R_+^{2N}$,
suppose $\{\eta_j([0,t_j]), 1\le j\le 2N\}$ are disjoint. 
Let $D_{\boldsymbol{t}}$ be the connected component of $\unitD\setminus (\cup_j \eta_j([0,t_j]))$ containing the origin. Let $g_{\boldsymbol{t}}:D_{\boldsymbol{t}}\to \unitD$ be the unique conformal transformations with $g_{\boldsymbol{t}}(0)=0$ and $g_{\boldsymbol{t}}'(0)>0$.
We set 
\begin{equation*}
\aleph(\boldsymbol{t})=\log g_{\boldsymbol{t}}'(0).
\end{equation*}
We say that the collection of curves $(\eta_j([0,t_j]), 1\leq j\leq 2N)$ have \textit{$a$-common  parameterization} if for each $t<\tau:=\min_{1\leq j\leq 2N} t_j$, 
\begin{align*}
	\partial_j \aleph(t,t,\ldots,t)=2a,\quad j\in \{1,2,\ldots, 2N\}. 
\end{align*}
In particular, under $a$-common parameterization, we denote $g_t:=g_{(t,t,\ldots,t)}$ and then 
\begin{align*}
	g_t'(0)=\exp(4aNt).
\end{align*}
Moreover, under $a$-common parameterization, $g_t$ satisfies the following radial Loewner equation (see, e.g.,~\cite[Proposition~3.1]{HealeyLawlerNSidedRadialSLE})
\begin{align*}
	\partial_t g_t(w)=2ag_t(w)\sum_{j=1}^{2N}\frac{\exp(2\ii\theta_t^j)+g_t(w)}{\exp(2\ii \theta_t^j)-g_t(w)},\quad \forall w\in \mathbb{D}\setminus\left(\cup_j \eta_j([0,t])\right),\quad \forall t<\tau.
\end{align*}
We say that $\{(\eta_j(t), 0\le t<\tau )\}_{1\leq j\leq 2N}$ is the radial Loewner chain with $a$-common parameterization and with driving functions $\{(\theta_t^j, 0\leq t<\tau)\}_{1\leq j\leq 2N}$.

Now, we are ready to state the conclusion about scaling limit of Ising interfaces in annulus. 
Fix $p>0$ and $\theta^1<\cdots<\theta^{2N}<\theta^1+\pi$ and write $\bs{\theta}=(\theta^1, \ldots, \theta^{2N})$. Denote $x_j=\exp(2\ii \theta^j)$ for $1\leq j\leq 2N$ 
and write $\bs{x}=(x_1, \ldots, x_{2N})$. 
Consider the annular polygon $(\mathbb{A}_{p};x_1,\ldots,x_{2N})$. Suppose that a sequence $(\mathcal{A}^{\delta};x_1^{\delta},\ldots,x_{2N}^{\delta})$ of discrete annular polygons converges to $(\mathbb{A}_{p};x_1,\ldots, x_{2N})$ in the close-Carath\'{e}odory sense (see Section~\ref{sec::Ising_model}). Consider the critical Ising model on $\mathcal{A}^{\delta,\bullet}$ with boundary conditions~\eqref{eqn::bounda_condi}.   For each $j\in \{1,2\ldots, 2N\}$, let $\eta_j^\delta$ be the interface starting from $x_j^\delta$. Using the techniques from~\cite{IzyurovIsingMultiplyConnectedDomains} and~\cite{CHI22}, the collection of interfaces $(\eta_1^\delta,\ldots,\eta_{2N}^\delta)$ converge weakly under the topology induced by~\eqref{eq::curve_metric_2} to a collection of continuous simple curves $(\eta_1,\ldots, \eta_{2N})$, and we denote by $\PP_{\Ising}^{(\mathbb{A}_p; \bs{x})}$ the limiting distribution (see Proposition~\ref{prop::joint_Ising_annulus}). Under $\PP_{\Ising}^{(\mathbb{A}_p; \bs{x})}$, the curves $(\eta_1, \ldots, \eta_{2N})$ may meet each other or hit the inner hole, see Figure~\ref{fig::IsingAnnulus}. As $p\to\infty$, it is less likely that all $2N$ interfaces get close to the inner hole before they meet each other. 
Our first main conclusion is about this rare event: we first derive the asymptotic of the probability of this rare event, and we then derive the limiting distribution as $p\to\infty$ of the curves $(\eta_1, \ldots, \eta_{2N})$ conditional on the rare event. 

\begin{theorem}\label{thm::Ising_annulus_radialSLE}
		Fix $\theta^1<\cdots<\theta^{2N}<\theta^1+\pi$ and write $\bs{\theta}=(\theta^1, \ldots, \theta^{2N})$. Denote $x_j=\exp(2\ii\theta^j)$ for $1\leq j\leq 2N$ and write $\bs{x}=(x_1, \ldots, x_{2N})$. For $p>0$, denote by $\PP_{\Ising}^{(\mathbb{A}_p; \bs{x})}$ the law of the limit $(\eta_1, \ldots, \eta_{2N})$ as described above (see Proposition~\ref{prop::joint_Ising_annulus}). 
\begin{enumerate}
\item[(i)] There exists a constant $C\in (1,\infty)$ such that 
\begin{equation}\label{eqn::IsingAsy}
			C^{-1}\exp\left(-\frac{16N^2-1}{24}p\right)\le \PP_{\Ising}^{(\mathbb{A}_p; \bs{x})}\left[\eta_1, \ldots, \eta_{2N}\text{ all hit } e^{-p}\unitD\right]\le C\exp\left(-\frac{16N^2-1}{24}p\right). 
		\end{equation}		
\item[(ii)] Denote by $\PP_{\mathrm{Ising}\cond p}^{(\mathbb{A}_p;\bs{x})}$ the law of the limit $(\eta_1, \ldots, \eta_{2N})$ conditional on the event $\{\eta_1, \ldots, \eta_{2N}\text{ all hit }e^{-p}\unitD\}$. Denote by $\PP_{\kappa=3\cond *}^{(\bs{\theta})}$ the law of $2N$-sided radial $\SLE_{3}$ in polygon $(\unitD; \exp(2\ii\theta^1), \ldots, \exp(2\ii\theta^{2N}))$ introduced by~\cite{HealeyLawlerNSidedRadialSLE}, see Section~\ref{subsec::pre_2NradialSLE}. We parameterize the process $\boldsymbol{\eta}=(\eta_1, \ldots, \eta_{2N})$ by $\frac{2}{3}$-common parameterization and denote by $(\LF_t, t\ge 0)$ the filtration generated by $(\boldsymbol{\eta}(t), t\ge 0)$. 
		Then, for any $s>0$, when both measures  $\PP_{\mathrm{Ising}\cond p}^{(\mathbb{A}_p;\bs{x})}$ and $\PP_{\kappa=3\cond *}^{(\bs{\theta})}$ are restricted to $\LF_s$, the total variation distance between the two measures goes to zero as $p\to\infty$: 
		\begin{equation}\label{eqn::IsingAnnulusCvg}
			\lim_{p\to \infty}\dist_{TV}\left(\PP_{\Ising\cond p}^{(\mathbb{A}_p; \bs{x})}[\cdot \cond_{\LF_s}], \PP_{\kappa=3\cond *}^{(\bs{\theta})}[\cdot\cond_{\LF_s}]\right)=0.
		\end{equation}
\item[(iii)] Denote by $\nu_{\mathrm{Ising}\cond p}^{(\bs{\theta})}$ the law of the driving function $(\bs{\theta}_t=(\theta_t^1, \ldots, \theta_t^{2N}), t\ge 0)$ under $\PP_{\Ising\cond p}^{(\mathbb{A}_p; \bs{x})}$. Denote by $\nu_{\kappa=3\cond *}^{(\bs{\theta})}$ the law of 2N-radial Bessel process: 
\begin{equation}\label{eqn::RadialBessel}
\ud \theta_t^j=\ud B_t^j+\frac{4}{3}\sum_{k\neq j}\cot(\theta_t^j-\theta_t^k)\ud t, \quad 1\le j\le 2N, 
\end{equation}
where $\{B^j\}_{1\le j\le 2N}$ are independent standard Brownian motions. 
Denote by $(\LG_t, t\ge 0)$ the filtration generated by $(\bs{\theta}_t, t\ge 0)$. 
Then, for any $s>0$, when both measures $\nu_{\mathrm{Ising}\cond p}^{(\bs{\theta})}$ and $\nu_{\kappa=3\cond *}^{(\bs{\theta})}$ are restricted to $\LG_s$, the total variation distance between the two measures goes to zero as $p\to\infty$: 
\begin{equation}\label{eqn::IsingAnnulusDrivingCvg}
\lim_{p\to \infty}\dist_{TV}\left(\nu_{\mathrm{Ising}\cond p}^{(\bs{\theta})}[\cdot \cond_{\LG_s}], \nu_{\kappa=3\cond *}^{(\bs{\theta})}[\cdot\cond_{\LG_s}]\right)=0.
\end{equation}
\end{enumerate}		
	\end{theorem}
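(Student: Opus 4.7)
The proof hinges on the Green's function estimate for chordal $N$-$\SLE_3$ announced in the abstract: for chordal $N$-$\SLE_3$ $(\gamma_1,\dots,\gamma_N)$ in $\unitD$ started from $\bs{x}$, one has $\PP[\dist(0,\gamma_j)<r,\,1\le j\le N]=r^{A_{2N}}G(\bs{x})(1+o(1))$ with $A_{2N}=(16N^2-1)/24$ when $\kappa=3$, where $G$ is (up to explicit conformal factors) the partition function defining the $2N$-sided radial $\SLE_3$ of~\cite{HealeyLawlerNSidedRadialSLE}. Combining this estimate with a discrete-to-continuum comparison and a Bayesian identification of the conditional law yields the theorem.

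First, I would compare $\PP_{\Ising}^{(\mathbb{A}_p;\bs{x})}$ restricted to $\LF_s$ with the analogous scaling limit $\PPIsing^{(\unitD;\bs{x})}$ of Ising interfaces in the simply connected disc with the same alternating $\oplus/\ominus$ boundary data. On the event $\{\text{no }\eta_j(0,s)\text{ enters }e^{-p/2}\unitD\}$, the Radon--Nikodym derivative of one against the other equals a ratio of continuum Ising partition functions (free inner hole vs.\ no hole). Using the Kadanoff--Ceva fermion machinery of~\cite{IzyurovIsingMultiplyConnectedDomains,CHI22}, this ratio tends to $1$ uniformly in admissible configurations as $p\to\infty$: a conformally tiny free hole has negligible influence on the partition function. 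Consequently, $\PP_{\Ising}^{(\mathbb{A}_p;\bs{x})}\big|_{\LF_s}$ converges as $p\to\infty$ to the scaling limit of Ising interfaces in the disc, which, summing over link patterns, is a mixture of chordal $N$-$\SLE_3$'s to which the Green's function estimate applies.

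Second, I would apply the Green's function estimate link-pattern by link-pattern at the (random) tip configuration $\bs{x}_s$ obtained by conformally uniformising $\unitD\setminus\bigcup_j\eta_j[0,s]$ back to $\unitD$. Under the $\tfrac{2}{3}$-common parameterisation,
\[
\PP_{\Ising}^{(\mathbb{A}_p;\bs{x})}\bigl[\text{all }\eta_j\text{ hit }e^{-p}\unitD\cond\LF_s\bigr]=e^{-A_{2N}p}\,G(\bs{x}_s)\,g_s'(0)^{A_{2N}}(1+o(1)),
\]
where $g_s$ is the common-parameterisation uniformising map. The $s=0$ case is~\eqref{eqn::IsingAsy}. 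Dividing and passing to the limit gives
\[
\frac{d\PP_{\Ising\cond p}^{(\mathbb{A}_p;\bs{x})}}{d\PP_{\Ising}^{(\mathbb{A}_p;\bs{x})}}\bigg|_{\LF_s}\;\xrightarrow[p\to\infty]{}\;\frac{G(\bs{x}_s)\,g_s'(0)^{A_{2N}}}{G(\bs{x})},
\]
and by construction in~\cite{HealeyLawlerNSidedRadialSLE} the right-hand side is precisely the Radon--Nikodym derivative of $\PP_{\kappa=3\cond *}^{(\bs{\theta})}$ with respect to the scaling limit of Ising in the disc on $\LF_s$. Combining with step one yields~\eqref{eqn::IsingAnnulusCvg}, and~\eqref{eqn::IsingAnnulusDrivingCvg} follows since $\LG_s\subset\LF_s$.

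\textbf{Main obstacle.} The hard part is making the Green's function estimate uniform in the random configuration $\bs{x}_s$: one needs an error term $1+o(1)$ holding uniformly over tip configurations in a compact chamber, together with a separation lemma ensuring that with high $\PP_{\Ising}^{(\mathbb{A}_p;\bs{x})}$-probability the tips remain in such a chamber up to time $s$. Paired with this, the partition-function comparison in step one must also be uniform over admissible configurations, a quantitative ``conformally small free hole is invisible'' statement refining~\cite{IzyurovIsingMultiplyConnectedDomains,CHI22}. Once these two uniformities are in place, the remaining Bayesian argument is routine, and total-variation convergence follows from pointwise convergence of the Radon--Nikodym derivatives together with uniform integrability (Scheff\'e).
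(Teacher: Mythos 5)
Your treatment of the disc side (identifying the conditional law of $\PP_{\Ising}^{(\unitD;\bs{x})}$ given the hitting event with $2N$-sided radial $\SLE_3$ via the Green's function estimate and the Radon--Nikodym derivative $g_s'(0)^{\armexp}G(\bs{x}_s)/G(\bs{x})$) matches the paper's route through Corollary~\ref{cor::chordaltoradial} and~\eqref{eqn::def_ptotal}. The gap is in the annulus-to-disc comparison. Your step one only controls the \emph{unconditioned} laws on $\LF_s$: the partition-function ratio (free hole vs.\ no hole) tending to $1$ on the event that the curves stay away from $e^{-p/2}\unitD$ says nothing about the conditioning on $E_p=\{\eta_1,\ldots,\eta_{2N}\text{ all hit }e^{-p}\unitD\}$, which is not $\LF_s$-measurable and forces the curves all the way down to the scale of the hole, where the two models genuinely differ. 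Your step two then asserts $\PP_{\Ising}^{(\mathbb{A}_p;\bs{x})}[E_p\cond\LF_s]=e^{-\armexp p}G(\bs{x}_s)g_s'(0)^{\armexp}(1+o(1))$, i.e.\ you apply the disc Green's function asymptotic to the \emph{annulus} measure. This is not justified by anything you propose, and it is strictly stronger than what the paper establishes: the paper only proves a two-sided constant-factor comparison $c_1\le \PP_{\Ising}^{(\A_p;\bs{x})}[E_p]/\PP_{\Ising}^{(\unitD;\bs{x})}[E_p]\le c_2$ (Lemma~\ref{lem::compare_arm_events}, via RSW, spatial mixing, and well-separatedness of arms), which is exactly why~\eqref{eqn::IsingAsy} is stated with an undetermined constant $C$ rather than as an exact asymptotic. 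In your Bayes decomposition the factor $\PP_{\Ising}^{(\A_p;\bs{x})}[E_p\cond\LF_s]\big/\PP_{\Ising}^{(\A_p;\bs{x})}[E_p]$ therefore does not visibly converge to the disc ratio, and the argument does not close.

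The missing idea is the one the paper supplies in Lemma~\ref{lem::Ising_coupling}: a \emph{discrete coupling of the two conditional measures} $\PP_{\LA}^{\delta}[\cdot\cond E_p^{\delta}]$ and $\PP_{\Omega}^{\delta}[\cdot\cond E_p^{\delta}]$ (in the spirit of the pivotal-measure couplings of~\cite{GarbanPeteSchrammPivotalClusterInterfacePercolation}, driven by the strong RSW estimate of~\cite{ChelkakDuminilHonglerCrossingprobaFKIsing}), under which the configurations agree on the mesoscopic annulus $\Lambda_r^{\delta}$ up to probability $c_3(e^{-p}/r)^{c_4}$. Taking $r\asymp e^{-4Ns/3}$ (Koebe) makes every $E\in\LF_s$ measurable with respect to that region and yields~\eqref{eqn::Ising_annulus_radialSLE_aux21} directly, bypassing any need for an exact annulus asymptotic. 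If you want to keep your continuum partition-function route, you would have to upgrade it to a statement about the conditional measures given the $2N$-arm event down to scale $e^{-p}$ — effectively a quasi-multiplicativity/separation argument — which is the substantive content you are currently assuming rather than proving. Your ``main obstacle'' paragraph locates the difficulty in the uniformity of the Green's function estimate over tip configurations, but that part is already handled by the quasi-invariant density machinery of Section~\ref{subsec::TransitionDensity}; the real obstacle is the conditioned annulus-vs-disc comparison.
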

The proof of Theorem~\ref{thm::Ising_annulus_radialSLE} relies crucially on an estimate of multiple chordal $\SLE$ in Theorem~\ref{thm::NChordalSLEGreenFunction} and we will explain our strategy to prove Theorem~\ref{thm::Ising_annulus_radialSLE} after the statement of Corollary~\ref{cor::chordaltoradial}. Here we give several remarks for Theorem~\ref{thm::Ising_annulus_radialSLE}: 
\begin{itemize}
\item The number $\frac{16N^2-1}{24}$ in~\eqref{eqn::IsingAsy} is the so-called $2N$-arm exponent of Ising model derived in~\cite{WuAlternatingArmIsing}. We will give a different proof of~\eqref{eqn::IsingAsy} in Section~\ref{sec::cvg_Ising_radial}, where the main ingredient is estimate of multiple chordal $\SLE_{\kappa}$ (Theorem~\ref{thm::NChordalSLEGreenFunction}). 
\item The invariant density for~\eqref{eqn::RadialBessel} is the so-called Dyson's circular ensemble~\cite{DysonJMP1962}. The convergence in~\eqref{eqn::IsingAnnulusDrivingCvg} goes back to~\cite{CardySLEDysonCircularEnsemble, CardySLEDysonCircularEnsembleCorrigendum}. 
\item The conclusions in Theorem~\ref{thm::Ising_annulus_radialSLE} hold regardless of the boundary conditions of the Ising model on the inner boundary. Suppose we consider the Ising model in annular polygon with boundary conditions~\eqref{eqn::bounda_condi} on the outer boundary and another boundary condition in the inner boundary, for instance, any finite combination of $\oplus/\ominus/\free$. Using the same strategy in Section~\ref{sec::Ising_annulus}, we are able to derive the analogue conclusion for Proposition~\ref{prop::joint_Ising_annulus} where the partition function $\partiIsing$ needs to be adjusted to the new boundary condition in the inner boundary. 
Whereas, the analogue conclusions for Theorem~\ref{thm::Ising_annulus_radialSLE} still hold with exactly the same proof.
\end{itemize}

\subsection{Green's function for multiple chordal SLE with $\kappa\in (0,4]$}
\label{subsec::intro_Green}
In this section, we focus on multiple chordal SLE. The motivation to study such process is to describe the scaling limit of interfaces in polygons.
We call $(\Omega; x_1, \ldots, x_{2N})$ a polygon if $\Omega\subset\C$ is a simply connected domain such that $\partial\Omega$ is locally connected and $x_1, \ldots, x_{2N}$ are $2N$ marked points lying on $\partial\Omega$ in counterclockwise order. We denote by $(x_1x_2)$ the boundary arc from $x_1$ to $x_2$ in counterclockwise order. 
We consider $N$ non-crossing simple curves in $\Omega$ such that each curve connects two points among $\{x_1, \ldots, x_{2N}\}$. These curves can have various planar connectivities. We describe such connectivities by link patterns $\alpha=\{\{a_1, b_1\}, \ldots, \{a_N, b_N\}\}$ where $\{a_1, b_1, \ldots, a_N, b_N\}=\{1, 2, \ldots, 2N\}$, and denote by $\LP_N$ the set of all link patterns. Note that $\#\LP_N$ is the Catalan number $C_N=\frac{1}{N+1}\binom{2N}{N}$. 
When $N=1$, we denote by $X(\Omega; x_1, x_2)$ the set of continuous simple unparameterized curves in $\Omega$ connecting $x_1$ and $x_2$ such that they only touch the boundary $\partial\Omega$ in $\{x_1, x_2\}$. In general, for $\alpha=\{\{a_1, b_1\}, \ldots, \{a_N, b_N\}\}\in\LP_N$, we denote by $X_{\alpha}(\Omega; x_1, \ldots, x_{2N})$ the set of families $(\gamma_1, \ldots, \gamma_N)$ of pairwise disjoint curves where $\gamma_j\in X(\Omega; x_{a_j}, x_{b_j})$ for all $j\in\{1,\ldots, N\}$. 

Fix $\kappa\in (0,4]$. For $N\ge 1$ and $\alpha\in\LP_N$, there exists a unique probability measure on $(\gamma_1, \ldots, \gamma_N)\in X_{\alpha}(\Omega; x_1, \ldots, x_{2N})$ such that, for each $j\in\{1, \ldots, N\}$, the conditional law of $\gamma_j$ given $\{\gamma_1, \ldots, \gamma_{N}\}\setminus\{\gamma_j\}$ is the chordal $\SLE_{\kappa}$ connecting $x_{a_j}$ and $x_{b_j}$ in the connected component of the domain $\Omega\setminus\bigcup_{k\neq j}\gamma_k$ having $x_{a_j}$ and $x_{b_j}$ on its boundary. See~\cite[Theorem~1.2]{BeffaraPeltolaWuUniqueness}.  We call this measure \textit{chordal $N$-$\SLE_{\kappa}$} in polygon $(\Omega; x_1, \ldots, x_{2N})$ associated to link pattern $\alpha$. 
Our second main conclusion is the following estimate of chordal $N$-$\SLE_{\kappa}$. 

\begin{theorem}\label{thm::NChordalSLEGreenFunction}
	Fix $\kappa\in (0,4]$. Fix $\theta^1<\cdots<\theta^{2N}<\theta^1+\pi$ and write $\bs{\theta}=(\theta^1, \ldots, \theta^{2N})$. We denote $x_j=\exp(2\ii\theta^j)$ for $1\le j\le 2N$ and write $\bs{x}=(x_1, \ldots, x_{2N})$. 
We denote by $\PP_{\alpha}^{(\unitD; \bs{x})}$ the law of chordal $N$-$\SLE_{\kappa}$ in polygon $(\unitD; \exp(2\ii\theta^1), \ldots, \exp(2\ii\theta^{2N}))$ associated to link pattern $\alpha\in\LP_N$ and suppose $(\gamma_1, \ldots, \gamma_N)\sim\PP_{\alpha}^{(\unitD; \bs{x})}$. We have  
		\begin{equation*}
		\PP_{\alpha}^{(\unitD; \bs{x})}\left[\dist(0,\gamma_j)< r, 1\leq j\leq N\right]=CG_\alpha(\bs{\theta})r^{\armexp}(1+O(r^{u})),\quad \text{as }r\to 0+,
		\end{equation*}
		where $\dist$ is Euclidean distance and 
\begin{itemize}
\item $A_{2N}$ is $2N$-arm exponent: 
			\begin{equation}\label{eqn::armexponent}
		\armexp=\frac{16N^2-(4-\kappa)^2}{8\kappa},
	\end{equation}
\item $G_{\alpha}$ is Green's function for chordal $N$-$\SLE_{\kappa}$ in Definition~\ref{def::NSLEGreen}, 
\item $C\in (0,\infty)$ is a constant depending on $\kappa, N, \alpha$ and $u>0$ is a constant depending on $\kappa, N$. 
\end{itemize}

\end{theorem}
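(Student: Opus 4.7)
The plan is to convert the chordal $N$-$\SLE_\kappa$ measure $\PP_\alpha^{(\unitD;\bs x)}$ to a Girsanov-weighted version of the $2N$-sided radial $\SLE_\kappa$ in common parametrization, and then to extract the $r\to 0+$ asymptotic using exponential ergodicity of the driving $2N$-radial Bessel process. Set $a=2/\kappa$. On the event that the chords are disjoint and pass through a neighborhood of $0$, split each $\gamma_j$ at its unique point of closest conformal distance to $0$, so that the collection becomes $2N$ disjoint simple arcs from $\partial\unitD$ toward $0$. Parametrize them by $a$-common radial capacity as in the introduction, and let $\bs\theta_t=(\theta^1_t,\ldots,\theta^{2N}_t)$ be the driving angles. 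Write $\mathcal Z_\alpha$ for the pure partition function of chordal $N$-$\SLE_\kappa$ for the link pattern $\alpha$.

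The core of the argument is a Girsanov identity: on the filtration $\mathcal F_t$ up to the first collision or disconnection time, the Radon--Nikodym derivative of $\PP_\alpha^{(\unitD;\bs x)}$ against the $2N$-sided radial $\SLE_\kappa$ law is
\begin{equation*}
M_t \;=\; \frac{\mathcal Z_\alpha(\bs\theta_t)}{\mathcal Z_\alpha(\bs\theta_0)}\,\exp\!\left(-4aN\,A_{2N}\cdot t\right).
\end{equation*}
This is obtained from a direct Itô computation using the radial Loewner equation together with the null-vector PDEs satisfied by $\mathcal Z_\alpha$; the exponent $4aN\,A_{2N}$ is forced by the martingale condition and is how $A_{2N}$ enters the argument. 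Let $\tau_r$ be the first time at which the conformal radius of $D_t$ seen from $0$ equals $r$; in radial parametrization this is deterministic: $\exp(-4aN\tau_r)=r$. By Koebe's $1/4$- and growth theorems, the event $\{\dist(0,\gamma_j)<r,\;1\le j\le N\}$ is sandwiched between $\{\tau_{c_1 r}<\infty\}$ and $\{\tau_{c_2 r}<\infty\}$ for universal constants $c_1,c_2$. Combining,
\begin{equation*}
\PP_\alpha^{(\unitD;\bs x)}\!\left[\dist(0,\gamma_j)<r,\;1\le j\le N\right] \;=\; \frac{r^{A_{2N}}}{\mathcal Z_\alpha(\bs\theta_0)}\,\E_{\mathrm{rad}}\!\left[\mathcal Z_\alpha(\bs\theta_{\tau_r})\right]\!\cdot(1+O(r^{u})),
\end{equation*}
with $\E_{\mathrm{rad}}$ denoting expectation under the $2N$-sided radial $\SLE_\kappa$.

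Under $\E_{\mathrm{rad}}$, the process $\bs\theta_t$ is the Dyson-type $2N$-radial Bessel diffusion (the general-$\kappa$ analogue of~\eqref{eqn::RadialBessel}, with coefficient $4/\kappa$). For $\kappa\in(0,4]$ this diffusion has no collisions, lives on a compact quotient of the Weyl chamber, and is uniformly elliptic away from the diagonal; it admits a unique invariant measure $\mu_\kappa$ (the $\kappa$-circular Dyson ensemble). Standard spectral-gap and coupling arguments then yield
\begin{equation*}
\E_{\mathrm{rad}}\!\left[\mathcal Z_\alpha(\bs\theta_t)\right] \;=\; \int \mathcal Z_\alpha\, d\mu_\kappa \;+\; O(e^{-ct}), \qquad t\to\infty,
\end{equation*}
uniformly on compact subsets of the open chamber, for some $c>0$. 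Substituting and setting $G_\alpha(\bs\theta_0):=\mathcal Z_\alpha(\bs\theta_0)^{-1}\int \mathcal Z_\alpha\, d\mu_\kappa$ gives the claimed asymptotic with error exponent $u=c/(4aN)$.

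The main obstacle is quantitative control in the Girsanov step. One needs sharp information on positivity, smoothness, and boundary behavior of $\mathcal Z_\alpha$ as two consecutive angles $\theta^j-\theta^{j+1}$ tend to $0$, so that $M_t$ is a genuine (not merely local) martingale up to $\tau_r$ and so that the \emph{near-disconnection} scenarios — where some chord is about to disconnect from $0$ rather than approaching it — contribute only to the $O(r^u)$ error. For $N=1,2$, Lawler--Rezaei and Zhan handled this via explicit hypergeometric formulas for $\mathcal Z_\alpha$; for general $N$ no such closed form is available, so one must instead combine the abstract existence and conformal covariance of the pure partition functions (for example from the Beffara--Peltola--Wu framework) with a priori boundary estimates and a quantitative spectral gap for the $2N$-radial Bessel process that is uniform in the starting configuration. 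These two estimates constitute the technical heart of the proof.
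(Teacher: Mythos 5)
Your overall architecture (tilt the chordal $N$-$\SLE_\kappa$ against the $2N$-sided radial $\SLE_\kappa$ and invoke exponential ergodicity of the $2N$-radial Bessel driving process) is the same as the paper's, but two of your key steps contain genuine errors.

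First, the Girsanov identity is wrong. The Radon--Nikodym derivative of $\PP_\alpha^{(\unitD;\bs x)}$ against the $2N$-sided radial $\SLE_\kappa$ on $\LF_t\cap\{T>t\}$ is not $\frac{\LG_\alpha(\bs\theta_t)}{\LG_\alpha(\bs\theta_0)}e^{-4aN\armexp t}$ but rather $e^{-4aN\armexp t}\,\frac{\LG_\alpha(\bs\theta_t)}{\LG_\alpha(\bs\theta_0)}\cdot\frac{\LG_*(\bs\theta_0)}{\LG_*(\bs\theta_t)}$, where $\LG_*(\bs\theta)=\prod_{j<k}|\sin(\theta^k-\theta^j)|^{2/\kappa}$ is the partition function of the radial measure itself (this is the ratio of the two local martingales $M^\alpha_t$ and $M^*_t$ against the independent product measure). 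Equivalently, the eigenfunction of the $2N$-radial Bessel generator with eigenvalue $-4aN\armexp$ is $\LG_\alpha/\LG_*$, not $\LG_\alpha$; your claimed $M_t$ is not a martingale under the radial law, so the "direct It\^o computation" does not close. This is not merely cosmetic: carrying your $M_t$ through yields a leading coefficient proportional to $\LG_\alpha(\bs\theta)^{-1}$ rather than to $G_\alpha(\bs\theta)=\LG_*(\bs\theta)/\LG_\alpha(\bs\theta)$, i.e.\ the wrong dependence on the boundary configuration.

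Second, and more fundamentally, the sandwiching $\{\tau_{c_1r}<\infty\}\subset\{\dist(0,\gamma_j)<r,\ \forall j\}\subset\{\tau_{c_2r}<\infty\}$ with universal constants is false in the first inclusion. Under the $a$-common parameterization one only has $t\le t_j(t)\le 2Nt$ for the individual capacities, so survival to common-capacity conformal radius $\rho$ guarantees each individual curve is within distance roughly $\rho^{1/(2N)}$ of the origin, not $\rho$ up to a constant. Hence $\PP_\alpha[\dist(0,\gamma_j)<r,\ \forall j]$ and $\PP_\alpha[T>\tau_{cr}]$ are comparable only up to a nontrivial multiplicative constant whose existence (with a rate) is exactly the analytic heart of the theorem. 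The paper handles this by averaging $P(\bs\theta;r)$ against the quasi-invariant density $p_\alpha$, using the domain Markov property and Koebe distortion to prove a two-sided multiplicative estimate $(1-R)^{2\armexp}q(R)\le q(r)\le(1+R)^{2\armexp}q(R)$ for $q(r)=r^{-\armexp}P(r)$, and deducing that $\lim_{r\to0}q(r)=L$ exists with error $O(r)$; the constant $C=\LJ_\alpha L$ is then produced by combining this with the transition-density asymptotics at an optimally chosen intermediate scale. Your one-line reduction to the deterministic capacity time $\tau_r$ skips this entirely, so the existence of the constant $C$ (and the error exponent $u$) is not established.
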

	
\begin{definition}
	\label{def::NSLEGreen}
	Fix $\kappa\in (0,4]$. 
	For $\theta^1<\cdots<\theta^{2N}<\theta^1+\pi$ and $\alpha\in\LP_N$, we define 
	Green's function for chordal $N$-$\SLE_{\kappa}$ as
	\begin{align*}
		G_{\alpha}(\theta^1,\ldots,\theta^{2N}):=\frac{\LG_*(\theta^1,\ldots,\theta^{2N})}{\LG_\alpha(\theta^1,\ldots,\theta^{2N})},
	\end{align*}
	where $\LG_*$ is the partition function for $2N$-sided radial $\SLE_{\kappa}$: 
	\begin{align}\label{eqn::PF_multiplerSLE}
	\LG_*(\theta^1,\ldots,\theta^{2N}):=\prod_{1\leq j<k\leq 2N}|\sin(\theta^k-\theta^j)|^{2/\kappa},
\end{align}
 and $\LG_{\alpha}$ is the pure partition function for chordal $N$-$\SLE_{\kappa}$ defined in~\eqref{eqn::PPF_unitD}. 
\end{definition}

Theorem~\ref{thm::NChordalSLEGreenFunction} is proved for $N=1$ in~\cite{LawlerRezaeiNaturalParameterization, LawlerRezaeiMinkowskiContent} and is proved for $N=2$ in~\cite{ZhanGreen2SLE}; and we will prove it for general $N\ge 3$ and $\alpha\in\LP_N$ in Section~\ref{sec::multipleSLE}. The main ingredients of the proof are properties of chordal $N$-$\SLE_{\kappa}$ from~\cite{PeltolaWuGlobalMultipleSLEs} (see Section~\ref{subsec::pre_chordalNSLE}) and properties of $2N$-sided radial $\SLE_{\kappa}$ from~\cite{HealeyLawlerNSidedRadialSLE} (see Section~\ref{subsec::pre_2NradialSLE}). With these two inputs, we may derive the conclusion using a similar analysis as in~\cite{LawlerRezaeiNaturalParameterization, LawlerRezaeiMinkowskiContent}.  Although the proof strategy is pretty clear, the proof itself still involves a non-trivial calculation, see Section~\ref{subsec::TransitionDensity}. 
We remark that~\cite{LawlerRezaeiNaturalParameterization, LawlerRezaeiMinkowskiContent} and~\cite{ZhanGreen2SLE} prove the conclusion with $N=1$ and $N=2$ for $\kappa\in (0,8)$, whereas we prove the conclusion for $N\ge 3$ and for $\kappa\in (0,4]$. We are not able to extend our conclusion to $\kappa>4$ because the tools on chordal $N$-$\SLE_{\kappa}$ from~\cite{PeltolaWuGlobalMultipleSLEs} and the tools on  $2N$-sided radial $\SLE_{\kappa}$ from~\cite{HealeyLawlerNSidedRadialSLE} are only available for $\kappa\le 4$ so far. 

As a consequence of Theorem~\ref{thm::NChordalSLEGreenFunction}, we are able to describe the law of chordal $N$-$\SLE_{\kappa}$ conditioned to go through the origin. Suppose $(\gamma_1, \ldots, \gamma_N)\sim\PP_{\alpha}^{(\unitD; \bs{x})}$ is chordal $N$-$\SLE_{\kappa}$ in polygon $(\unitD; x_1, \ldots, x_{2N})$ associated to link pattern $\alpha=\{\{a_1, b_1\}, \ldots, \{a_N, b_N\}\}\in \LP_N$. Recall that $\gamma_j$ is a curve in $\unitD$ from $x_{a_j}$ to $x_{b_j}$. We view $(\gamma_1, \ldots, \gamma_N)$ as a $2N$-tuple of continuous simple curves $\bs{\eta}=(\eta_1, \ldots, \eta_{2N})$: for $j\in\{1, \ldots, N\}$, we define $\eta_{a_j}$ to be $\gamma_j$ and $\eta_{b_j}$ to be the time-reversal of $\gamma_j$. In this way, $\eta_j$ is a continuous simple curve in $\unitD$ starting from $x_j$ for $j\in\{1, \ldots, 2N\}$. 

\begin{corollary}\label{cor::chordaltoradial} Fix $\kappa\in (0,4]$ and $a=2/\kappa$ and $\alpha\in\LP_N$. Fix $\theta^1<\cdots<\theta^{2N}<\theta^1+\pi$ and write $\bs{\theta}=(\theta^1, \ldots, \theta^{2N})$. We denote $x_j=\exp(2\ii\theta^j)$ for $1\le j\le 2N$ and write $\bs{x}=(x_1, \ldots, x_{2N})$. 
\begin{itemize}
\item For $p>0$, denote by $\PP_{\alpha\cond p}^{(\unitD; \bs{x})}$ the measure $\PP_{\alpha}^{(\unitD; \bs{x})}$ conditional on the event $\{\eta_1, \ldots, \eta_{2N}\text{ all hit }e^{-p}\unitD\}$. 
\item Denote by $\PP_*^{(\bs{\theta})}$ the law of $2N$-sided radial $\SLE_{\kappa}$ in polygon $(\unitD; \exp(2\ii\theta^1), \ldots, \exp(2\ii\theta^{2N}))$, see Section~\ref{subsec::pre_2NradialSLE}. 
\end{itemize}
We parameterize the process $\bs{\eta}=(\eta_1, \ldots, \eta_{2N})$ by $a$-common parameterization and denote by $(\LF_t, t\ge 0)$ the filtration generated by $(\bs{\eta}(t), t\ge 0)$. Then, for any $s>0$, when both measures $\PP_{\alpha\cond p}^{(\unitD; \bs{x})}$ and $\PP_*^{(\bs{\theta})}$ are restricted to $\LF_{s}$, the total variation distance between the two measures goes to zero as $p\to \infty$:
\begin{equation}\label{eqn::TV_Palpha_P*}
	\lim_{p\to \infty}\emph{dist}_{\text{TV}}\left(\PP_{\alpha\cond p}^{(\unitD; \bs{x})}\left[\cdot |_{\LF_{s}}\right], \PP_*^{(\bs{\theta})}\left[\cdot |_{\LF_{s}}\right]\right)=0.
\end{equation}
Furthermore, we also have the following convergence of the driving functions. Denote by $(\bs{\theta}_t,t\geq 0)$ the driving function  of $(\bs{\eta}(t),t\geq 0)$ under $\PP_{\alpha}^{(\unitD;\bs{x})}$, it satisfies the SDE
	\begin{equation}\label{eqn::SDE_chordalNSLE}
		\ud\theta_t^j=\ud B_t^j+(\partial_j\log\LG_\alpha)(\theta_t^1,\ldots,\theta_t^{2N})\ud t+a\sum_{k\ne j}\cot(\theta_t^j-\theta_t^k)\ud t, \quad 1\leq j\leq 2N, \quad 0\leq t<T,
	\end{equation}
	where $\{B^j\}_{1\leq j\leq 2N}$ are independent standard Brownian motions and $T$ is the lifetime, and we denote by $(\LG_t,t\geq 0)$ the filtration generated by $(\bs{\theta}_t,t\geq 0)$. 
	\begin{itemize}
		\item For $t>0$, denote by $\nu_{\alpha\cond t}^{(\bs{\theta})}$ the law of driving function under $\PP_\alpha^{(\unitD;\bs{x})}$ conditional on $\{T>t\}$.
		\item Denote by $\nu_*^{(\bs{\theta})}$ the law of $2N$-radial Bessel process as in~\cite{HealeyLawlerNSidedRadialSLE}: 
		\begin{equation}\label{eq::2NRadialBES}
	\ud\theta_t^j=\ud B_t^j+2a\sum_{k\ne j}\cot(\theta^j_t-\theta^k_t)\ud t, \quad 1\leq j\leq 2N,
\end{equation}
where $\{B^j\}_{1\leq j\leq 2N}$ are independent standard Brownian motions. 
	\end{itemize}
	Consequently, for any $s>0$, when both measures $\nu_{\alpha\cond t}^{(\bs{\theta})}$ and $\nu_*^{(\bs{\theta})}$ are restricted to $\LG_s$, the total variation distance between the two measures goes to zero as $t\to\infty$:
	\begin{equation}\label{eqn::dist_TV_nu}
		\lim_{t\to\infty}\dist_{TV}\left(\nu_{\alpha\cond t}^{(\bs{\theta})}[\cdot\cond_{\LG_s}], \nu_*^{(\bs{\theta})}[\cdot\cond_{\LG_s}]\right)=0.
\end{equation}
\end{corollary}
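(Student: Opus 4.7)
The plan is to compare $\PP_{\alpha\cond p}^{(\unitD;\bs{x})}$ with $\PP_\alpha^{(\unitD;\bs{x})}$ on $\LF_s$ by extracting the explicit Radon--Nikodym derivative from Theorem~\ref{thm::NChordalSLEGreenFunction}, then identify its $p\to\infty$ limit with the Radon--Nikodym derivative of $\PP_*^{(\bs\theta)}$ with respect to $\PP_\alpha^{(\unitD;\bs{x})}$ on $\LF_s$. The total variation convergence~\eqref{eqn::TV_Palpha_P*} will then follow by Scheffé's lemma.

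First I would use the domain Markov property of chordal $N$-$\SLE_\kappa$: conditional on $\LF_s$, the continuation of the curves in $D_s:=\unitD\setminus\bigcup_j\eta_j([0,s])$ is chordal $N$-$\SLE_\kappa$ in $D_s$ with the updated marked points. Under the conformal map $g_s\colon D_s\to\unitD$ with $g_s(0)=0$ and $g_s'(0)=e^{4aNs}$, the Koebe distortion theorem converts the event $E_p:=\{\dist(0,\gamma_j)<e^{-p},\,1\le j\le N\}$ into the event that the pushed-forward chordal $N$-$\SLE_\kappa$ comes within distance $e^{-p+4aNs}(1+o(1))$ of the origin. Applying Theorem~\ref{thm::NChordalSLEGreenFunction} to both the conditional and unconditional probabilities yields
\begin{equation*}
\frac{\ud\PP_{\alpha\cond p}^{(\unitD;\bs{x})}}{\ud\PP_\alpha^{(\unitD;\bs{x})}}\bigg|_{\LF_s}=\frac{\PP_\alpha^{(\unitD;\bs{x})}[E_p\cond\LF_s]}{\PP_\alpha^{(\unitD;\bs{x})}[E_p]}\xrightarrow{p\to\infty}\frac{G_\alpha(\bs\theta_s)}{G_\alpha(\bs\theta)}\,e^{4aNs A_{2N}},
\end{equation*}
$\PP_\alpha^{(\unitD;\bs{x})}$-a.s. on the good event where $\bs\theta_s$ stays in a compact subset of the open chamber $\{\theta^1<\cdots<\theta^{2N}<\theta^1+\pi\}$.

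Next I would identify this limit as $\ud\PP_*^{(\bs\theta)}/\ud\PP_\alpha^{(\unitD;\bs{x})}\mid_{\LF_s}$. A direct computation from~\eqref{eqn::PF_multiplerSLE} gives $\partial_j\log\LG_*(\bs\theta)=a\sum_{k\ne j}\cot(\theta^j-\theta^k)$, so the drift in~\eqref{eqn::SDE_chordalNSLE} differs from the drift in~\eqref{eq::2NRadialBES} precisely by $\partial_j(\log\LG_*-\log\LG_\alpha)=\partial_j\log G_\alpha$. Girsanov's theorem thus expresses the ratio of driving-function laws on $\LG_s$ as the stochastic exponential of $\sum_j(\partial_j\log G_\alpha)(\bs\theta_t)\,\ud B^j_t$. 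Combining the null-vector PDE satisfied by $\LG_\alpha$ (Section~\ref{subsec::pre_chordalNSLE}) with the PDE satisfied by $\LG_*$ in the interior-marked-point setting (Section~\ref{subsec::pre_2NradialSLE}), an Itô computation shows that $G_\alpha(\bs\theta_t)\,e^{4aNtA_{2N}}$ is a $\PP_\alpha^{(\unitD;\bs{x})}$-martingale with differential $G_\alpha(\bs\theta_t)e^{4aNtA_{2N}}\sum_j\partial_j\log G_\alpha(\bs\theta_t)\,\ud B^j_t$, so the above stochastic exponential equals $G_\alpha(\bs\theta_s)/G_\alpha(\bs\theta)\cdot e^{4aNsA_{2N}}$, matching the first-step limit. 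The injectivity of the driving-function/curve correspondence for disjoint simple curves transports this identity from $\LG_s$ to $\LF_s$, and Scheffé's lemma then yields~\eqref{eqn::TV_Palpha_P*}.

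The assertion~\eqref{eqn::dist_TV_nu} follows by the same scheme applied to the driving-function filtration $\LG_\cdot$: the event $\{T>t\}$ is $\LG_t$-measurable, and under $a$-common parameterization the inner radius of $D_t$ decays like $e^{-4aNt}$ by Koebe, so $\PP_\alpha[T>t]$ is comparable at leading order to $\PP_\alpha[E_{4aNt}]$, yielding the same limiting density $G_\alpha(\bs\theta_s)/G_\alpha(\bs\theta)\cdot e^{4aNsA_{2N}}$. The main obstacle in both parts is uniformity of the expansion in Theorem~\ref{thm::NChordalSLEGreenFunction} over the random configuration $\bs\theta_s$, which may approach the boundary of the chamber where two coordinates collide and $G_\alpha$ degenerates. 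I would handle this by localizing on the event that the minimal angular gap of $\bs\theta_s$ stays above some $\varepsilon>0$, controlling $G_\alpha$ and the error term uniformly there, and bounding the complement using standard multiple-$\SLE$ estimates; sending $\varepsilon\to 0$ after $p\to\infty$ delivers the total-variation statements.
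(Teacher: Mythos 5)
Your strategy for~\eqref{eqn::TV_Palpha_P*} is essentially the paper's: the domain Markov property, Koebe distortion, and the asymptotic of Theorem~\ref{thm::NChordalSLEGreenFunction} give
$\ud\PP_{\alpha\cond p}^{(\unitD;\bs{x})}/\ud\PP_\alpha^{(\unitD;\bs{x})}\big|_{\LF_s}=e^{\armexp\cdot 4aNs}\,G_\alpha(\bs{\theta}_s)/G_\alpha(\bs{\theta})\,(1+O(e^{-up}))$, which is then matched against $\ud\PP_\alpha^{(\bs{\theta})}/\ud\PP_*^{(\bs{\theta})}\big|_{\LF_s}$. Where you differ: the paper reads off the latter Radon--Nikodym derivative directly as the ratio of the explicit martingales~\eqref{eqn::mart_alpha} and~\eqref{eqn::mart*} (this is~\eqref{eqn::RN_Palpha_P*}), whereas you re-derive it by Girsanov from the drift difference $\partial_j\log G_\alpha$ together with an It\^o computation showing $G_\alpha(\bs{\theta}_t)e^{\armexp\cdot 4aNt}$ is a $\PP_\alpha$-martingale; that computation is correct and equivalent. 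One simplification you are missing: the closing localization on the minimal angular gap is unnecessary. The error term in Theorem~\ref{thm::NChordalSLEGreenFunction} (equivalently in Proposition~\ref{prop::multiplechordalestimates}, which inherits it from Lemma~\ref{lem::density*}) is uniform over $\bs{\theta}\in\mathcal{X}_{2N}$: all of the degeneration near the boundary of the chamber is carried by the prefactor $G_\alpha(\bs{\theta}_s)$, which cancels exactly against the same factor in $\ud\PP_*^{(\bs{\theta})}/\ud\PP_\alpha^{(\bs{\theta})}$. Hence the density ratio tends to $1$ uniformly, total variation convergence is immediate, and neither Scheff\'e nor an $\varepsilon\to 0$ limit is needed.

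The one genuine soft spot is in your treatment of~\eqref{eqn::dist_TV_nu}: you propose to replace $\{T>t\}$ by a hitting event $E_{4aNt}$ on the grounds that the two probabilities are ``comparable at leading order.'' Comparability up to multiplicative constants is not enough here --- the total variation statement requires the exact limiting density $e^{\armexp\cdot 4aNs}G_\alpha(\bs{\theta}_s)/G_\alpha(\bs{\theta})$ with a uniform $1+o(1)$ error, and the inclusion between $\{T>t\}$ and a hitting event only goes one way (the paper proves $E_p\subset\{T>-\tfrac{1}{4aN}\log(4r)\}$; there is no reverse inclusion with matching constants). The correct route is to apply the sharp asymptotic $\PP_\alpha^{(\bs{\theta})}[T>t]=\LJ_\alpha G_\alpha(\bs{\theta})e^{-\armexp\cdot 4aNt}(1+O(e^{-\cvgexp\cdot 4aNt}))$ of Proposition~\ref{prop::multiplechordalestimates} to both $\PP_\alpha^{(\bs{\theta}_s)}[T>t-s]$ and $\PP_\alpha^{(\bs{\theta})}[T>t]$; their ratio gives the needed density directly. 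With that substitution your argument goes through.
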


We see from Corollary~\ref{cor::chordaltoradial} that $2N$-sided radial $\SLE_{\kappa}$ can be viewed as chordal $N$-$\SLE_{\kappa}$ conditioned to the event that all curves go through the origin, and that the driving function of chordal $N$-$\SLE_\kappa$ as a radial Loewner chain with $a$-common parameterization conditioned on the same event converges to $2N$-radial Bessel process, the driving function of $2N$-sided radial $\SLE_\kappa$. 

Now, let us explain how we prove Theorem~\ref{thm::Ising_annulus_radialSLE}. 
Fix $\theta^1<\cdots<\theta^{2N}<\theta^1+\pi$ and write $\bs{\theta}=(\theta^1, \ldots, \theta^{2N})$. 
We denote $x_j=\exp(2\ii\theta^j)$ for $1\le j\le 2N$ and write $\bs{x}=(x_1, \ldots, x_{2N})$. 
Recall that $\PP_{\Ising}^{(\mathbb{A}_p; \bs{x})}$ denotes the law of the limit $(\eta_1, \ldots, \eta_{2N})$ of Ising interfaces in annulus, as in Proposition~\ref{prop::joint_Ising_annulus}. We will compare $\PP_{\mathrm{Ising}}^{(\mathbb{A}_p;\bs{x})}$ with the scaling limit of Ising interfaces in polygons $(\unitD; x_1, \ldots, x_{2N})$, whose law is denoted by $\PP_{\Ising}^{(\unitD;\bs{x})}$. 
We denote by $\PP_{\kappa=3\cond \alpha}^{(\unitD; \bs{x})}$ the chordal $N$-$\SLE_3$ in polygon $(\unitD; x_1, \ldots, x_{2N})$ associated to link pattern $\alpha\in\LP_N$. Then the limit $\PP_{\Ising}^{(\unitD;\bs{x})}$ is a linear combination of $\{\PP_{\kappa=3\cond\alpha}^{(\unitD; \bs{x})}\}_{\alpha\in\LP_N}$, as proved in~ \cite{IzyurovIsingMultiplyConnectedDomains, BeffaraPeltolaWuUniqueness, PeltolaWuCrossingProbaIsing, KarrilaNewProba}, see Proposition~\ref{prop::Ising_polygon}. 
The analogue of Theorem~\ref{thm::Ising_annulus_radialSLE} for $\PP_{\Ising}^{(\unitD;\bs{x})}$ holds due to Theorem~\ref{thm::NChordalSLEGreenFunction} and Corollary~\ref{cor::chordaltoradial}. Then we show that Theorem~\ref{thm::Ising_annulus_radialSLE} also hold for $\PP_{\Ising}^{(\mathbb{A}_p;\bs{x})}$ by comparing it with $\PP_{\Ising}^{(\unitD;\bs{x})}$. This is accomplished by constructing a good coupling between the two measures, see Section~\ref{sec::cvg_Ising_radial}.

\section{Multiple chordal SLE and multiple radial SLE}
\label{sec::multipleSLE}
In this section, we consider the relation between multiple chordal SLE and multiple radial SLE. 
We fix the following parameters throughout this section:  
\begin{align}\label{eqn::constatnsabc}
	\kappa\in (0,4],\qquad	a=\frac{2}{\kappa},\qquad b=\frac{6-\kappa}{2\kappa},\qquad \tilde{b}=\frac{(\kappa-2)(6-\kappa)}{8\kappa},\qquad c=\frac{(6-\kappa)(3\kappa-8)}{2\kappa}.
\end{align}
Furthermore, we fix $N\ge 1$ and $\theta^1<\cdots<\theta^{2N}<\theta^1+\pi$ and write $\bs{\theta}=(\theta^1, \ldots, \theta^{2N})$. 
We denote $x_j=\exp{(2\ii\theta^j)}$ for $1\le j\le 2N$ and denote $\bs{x}=(x_1, \ldots, x_{2N})$. 
Recall that $\PP_{\alpha}^{(\unitD; \bs{x})}$ denotes the law of chordal $N$-$\SLE_{\kappa}$ in polygon $(\unitD; \exp(2\ii\theta^1), \ldots, \exp(2\ii\theta^{2N}))$ associated to link pattern $\alpha\in\LP_N$. 
In this section, we also denote 
\[\PP_{\alpha}^{(\bs{\theta})}=\PP_{\alpha}^{(\unitD; \bs{x})}.\]
We view it as a measure on $2N$-tuples of curves $\bs{\eta}=(\eta_1,\ldots, \eta_{2N})$ as in Section~\ref{subsec::intro_Green}. Recall that we have introduced $a$-common parameterization for $2N$-tuples of curves in Section~\ref{subsec::Ising_intro}. Under $a$-common parameterization, we denote by $T$ the lifetime of $\bs{\eta}=(\eta_1,\ldots,\eta_{2N})$, i.e., the first time that $\bs{\eta}$ hits the origin or that two of the curves in $\bs{\eta}$ meet. Under $\PP_{\alpha}^{(\bs{\theta})}$, the lifetime $T$ is finite almost surely and we have the following estimate.

\begin{proposition}\label{prop::multiplechordalestimates}  
	For $t>0$, 
	\begin{equation}\label{eqn::multiplechordalestimates}
		\PP_{\alpha}^{(\bs{\theta})}[T>t]=\LJ_{\alpha} G_\alpha(\bs{\theta})e^{-\armexp\cdot 4aNt}(1+O(e^{-\cvgexp\cdot 4aNt})), 
	\end{equation}
	where $\LJ_\alpha$ is a normalization constant defined in~\eqref{eq::LJ_alpha}, 
	$G_{\alpha}$ is Green's function for chordal $N$-$\SLE_{\kappa}$ in Definition~\ref{def::NSLEGreen}, and $\armexp$ is the $2N$-arm exponent~\eqref{eqn::armexponent} and $\cvgexp>0$ depends only on $\kappa$ and $N$. 
\end{proposition}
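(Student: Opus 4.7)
The strategy is to reweight $\PP_\alpha^{(\bs{\theta})}$ against $\PP_*^{(\bs{\theta})}$ by Girsanov, collapse the resulting Radon--Nikodym density into a closed form that factors out the exponential decay $e^{-\armexp\cdot 4aNt}$ using the null-vector PDEs satisfied by $\LG_\alpha$ and $\LG_*$, and then invoke exponential ergodicity of the $2N$-radial Bessel process~\eqref{eq::2NRadialBES} to extract the leading constant $\LJ_\alpha G_\alpha(\bs{\theta})$.

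Under $a$-common parameterization the driving function $(\bs{\theta}_t)$ solves~\eqref{eqn::SDE_chordalNSLE} under $\PP_\alpha^{(\bs{\theta})}$ and~\eqref{eq::2NRadialBES} under $\PP_*^{(\bs{\theta})}$. The identity $\partial_j\log\LG_* = a\sum_{k\neq j}\cot(\theta^j-\theta^k)$ reduces the drift difference between the two SDEs to $\partial_j\log(\LG_\alpha/\LG_*)=-\partial_j\log G_\alpha$. On $\{T>t\}$ the two laws on $\LG_t$ are mutually absolutely continuous, and Girsanov produces their Radon--Nikodym derivative as an exponential local martingale. Since $\LG_\alpha$ and $\LG_*$ are both positive solutions of the second-order null-vector PDEs for $\SLE_\kappa$ with $2N$ boundary insertions of weight $h=(6-\kappa)/(2\kappa)$, applying It\^o's formula to $\log(\LG_\alpha/\LG_*)(\bs{\theta}_t)$ under $\PP_*^{(\bs{\theta})}$ and invoking these PDEs collapses the stochastic integral and yields
\begin{equation*}
	\left.\frac{\ud\PP_\alpha^{(\bs{\theta})}}{\ud\PP_*^{(\bs{\theta})}}\right|_{\LG_t} \;=\; e^{-\armexp\cdot 4aNt}\,\frac{G_\alpha(\bs{\theta})}{G_\alpha(\bs{\theta}_t)}\qquad\text{on }\{T>t\}.
\end{equation*}
The eigenvalue $\armexp\cdot 4aN$ appears because the gap between the ``interior weight at the origin'' carried by $\LG_*$ and the absence thereof in $\LG_\alpha$ is precisely the $2N$-arm exponent~\eqref{eqn::armexponent}. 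Since the curves of $2N$-sided radial $\SLE_\kappa$ never meet before reaching the origin, $T=\infty$ holds $\PP_*^{(\bs{\theta})}$-almost surely, and hence
\begin{equation*}
	\PP_\alpha^{(\bs{\theta})}[T>t] \;=\; G_\alpha(\bs{\theta})\,e^{-\armexp\cdot 4aNt}\,\mathbb{E}_{\PP_*^{(\bs{\theta})}}\!\left[G_\alpha(\bs{\theta}_t)^{-1}\right].
\end{equation*}

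It remains to analyse the expectation on the right. Under $\PP_*^{(\bs{\theta})}$, the process $(\bs{\theta}_t)$ is a reversible diffusion on the open simplex modulo rotations, with invariant probability measure $\mu_*$ proportional to $\LG_*^{\,4}\,\ud\bs{\theta}$, namely the circular Dyson $\beta$-ensemble with $\beta=8/\kappa$ (cf.~\cite{DysonJMP1962,CardySLEDysonCircularEnsemble}). Quantitative spectral-gap or coupling estimates give exponential convergence to equilibrium at some rate $\cvgexp>0$ depending only on $\kappa$ and $N$: for bounded continuous $f$,
\begin{equation*}
	\bigl|\mathbb{E}_{\PP_*^{(\bs{\theta})}}[f(\bs{\theta}_t)] - \mu_*(f)\bigr| \;\le\; C(\bs{\theta},f)\,e^{-\cvgexp\cdot 4aNt}.
\end{equation*}
The asymptotics of $\LG_\alpha$ at linked and non-linked degenerations (from~\cite{PeltolaWuGlobalMultipleSLEs}), combined with the vanishing of $\LG_*$ and the $\LG_*^{\,4}$ weight of $\mu_*$, show that $G_\alpha^{-1}=\LG_\alpha/\LG_*$ is bounded and $\mu_*$-integrable. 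Applying the bound to $f=G_\alpha^{-1}$ and setting $\LJ_\alpha:=\int G_\alpha^{-1}\,\ud\mu_*$ gives $\mathbb{E}_{\PP_*^{(\bs{\theta})}}[G_\alpha(\bs{\theta}_t)^{-1}] = \LJ_\alpha + O(e^{-\cvgexp\cdot 4aNt})$, which combined with the previous display yields~\eqref{eqn::multiplechordalestimates}.

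The main obstacle is the quantitative spectral gap $\cvgexp$: qualitative ergodicity of~\eqref{eq::2NRadialBES} towards the circular $\beta$-ensemble is classical, but extracting an explicit rate depending only on $\kappa$ and $N$ requires either Bakry--\'Emery-type curvature--dimension estimates tailored to the singular cotangent drift, a spectral decomposition via Jack-polynomial theory, or a coupling argument exploiting the strong repulsion of the $\cot$-drift for $\kappa\le 4$. By contrast, the It\^o/PDE computation underlying the closed form of the Radon--Nikodym density is mechanical once the null-vector equations for $\LG_\alpha$ and $\LG_*$ are in hand; the only subtle bookkeeping is matching the generator eigenvalue exactly to the formula~\eqref{eqn::armexponent} for $\armexp$.
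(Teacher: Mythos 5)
Your overall route is the same as the paper's: compute the Radon--Nikodym derivative of $\PP_\alpha^{(\bs{\theta})}$ restricted to $\{T>t\}$ against $\PP_*^{(\bs{\theta})}$, obtain the closed form $e^{-\armexp\cdot 4aNt}\,G_\alpha(\bs{\theta})/G_\alpha(\bs{\theta}_t)$, and then average $G_\alpha(\bs{\theta}_t)^{-1}$ using exponential ergodicity of the $2N$-radial Bessel process. The paper reaches the same identity (its~\eqref{eqn::RN_Palpha_P*}) by comparing the two local martingales $M_t^\alpha$ and $M_t^*$ obtained from tilting $2N$ independent radial $\SLE_\kappa$'s, rather than by a direct Girsanov computation between the two driving SDEs; that difference is cosmetic, and your identification of the exponent $\armexp\cdot 4aN$ with the gap between the PDE eigenvalues is consistent with how the paper extracts $g_t'(0)^{-\armexp}=e^{-\armexp\cdot 4aNt}$.

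There is, however, a genuine gap in your final step. You claim that $G_\alpha^{-1}=\LG_\alpha/\LG_*$ is \emph{bounded} and then apply an ergodicity estimate stated for bounded continuous $f$. But $G_\alpha^{-1}$ is unbounded on $\mathcal{X}_{2N}$: by the recursive asymptotics of the pure partition functions, $\LG_\alpha$ blows up like $|\sin(\theta^{j+1}-\theta^j)|^{-2b}$ when a linked pair collides, while $\LG_*$ vanishes like $|\sin(\theta^{j+1}-\theta^j)|^{a}$ there, so $G_\alpha^{-1}$ diverges like $|\sin|^{-2b-a}$. What is true (and what the paper proves in the lemma establishing~\eqref{eq::LJ_alpha}) is that $f_{4a}\,G_\alpha^{-1}=\mathcal{I}_{4a}^{-1}F_{3a}\LG_\alpha$ is bounded, because $3a-2b=1>0$ by the power-law bound~\eqref{eq::G_PLB}; hence $G_\alpha^{-1}$ is $\mu_*$-integrable but not bounded. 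To conclude $\mathbb{E}_{\PP_*^{(\bs{\theta})}}[G_\alpha(\bs{\theta}_t)^{-1}]=\LJ_\alpha(1+O(e^{-\cvgexp\cdot 4aNt}))$ for such an unbounded integrand you need the stronger, \emph{uniform multiplicative} form of convergence of the transition density, $p_*(t;\bs{\theta},\bs{\theta}')=f_{4a}(\bs{\theta}')(1+O(e^{-\cvgexp\cdot 4aNt}))$, which is exactly~\cite[Proposition~5.5]{HealeyLawlerNSidedRadialSLE} (Lemma~\ref{lem::density*} in the paper). This also resolves what you flag as the ``main obstacle'': the quantitative rate $\cvgexp$ is not something to be re-derived via Bakry--\'Emery or Jack polynomials here; it is an off-the-shelf input from that reference, and in precisely the ratio form your argument actually requires.
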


We will give preliminaries on chordal $N$-$\SLE_{\kappa}$ in Section~\ref{subsec::pre_chordalNSLE} and give preliminaries on $2N$-sided radial $\SLE_{\kappa}$ in Section~\ref{subsec::pre_2NradialSLE}. We then derive transition density and quasi-invariant density of chordal $N$-$\SLE_{\kappa}$ in Section~\ref{subsec::TransitionDensity}. With these at hand, we prove Proposition~\ref{prop::multiplechordalestimates} and Theorem~\ref{thm::NChordalSLEGreenFunction} and Corollary~\ref{cor::chordaltoradial} in Section~\ref{subsec::proofs}.

\subsection{Preliminaries: chordal $N$-$\SLE_{\kappa}$}
\label{subsec::pre_chordalNSLE}
In this section, we will describe the Loewner chain of chordal $N$-$\SLE_{\kappa}$. This involves the notion of \textit{pure paritition functions} of multiple $\SLE_{\kappa}$. These are the recursive collection $\{\PartF_{\alpha} \colon \alpha \in \bigsqcup_{N\geq 0} \LP_N\}$ 
of functions \[\PartF_{\alpha} \colon \{(x_1,\ldots,x_{2N})\in\R^{2N}: x_1<\cdots<x_{2N}\}\to\R\]
uniquely determined by the following 4 properties:
\begin{itemize}
	\item BPZ equations: for all $ j \in \{1,\ldots,2N\}$, 
	\begin{align*}
		\left[ 
		\frac{1}{2} \partial_j^2
		+ \sum_{k\neq j} \left( \frac{a}{x_{k}-x_{j}} \partial_k
		- \frac{ab}{(x_{k}-x_{j})^{2}} \right) \right]
		\PartF_{\alpha}(x_1,\ldots,x_{2N}) =  0.
	\end{align*}
	\item M\"{o}bius covariance: for all M\"obius maps $\varphi$ of the upper half-plane $\HH$ such that $\varphi(x_{1}) < \cdots < \varphi(x_{2N})$, we have
	\begin{align}\label{eqn::COV}
		\PartF_{\alpha}(x_{1},\ldots,x_{2N}) = 
		\prod_{j=1}^{2N} \varphi'(x_{j})^{b} 
		\times \PartF_{\alpha}(\varphi(x_{1}),\ldots,\varphi(x_{2N})).
	\end{align}
	\item Asymptotics: with $\PartF_{\emptyset} \equiv 1$ for the empty link pattern $\emptyset \in \LP_0$, the collection $\{\PartF_{\alpha} \colon \alpha\in\LP_N\}$ satisfies the following recursive asymptotics property. Fix $N \ge 1$ and $j \in \{1,2, \ldots, 2N-1 \}$. 
	Then, we have
	\begin{align*}
		\lim_{x_j,x_{j+1}\to\xi} \frac{\PartF_{\alpha}(x_1,\ldots, x_{2N})}{ (x_{j+1}-x_j)^{-2b} }
		= 
		\begin{cases}
			\PartF_{\alpha/\{j,j+1\}}(x_1, \ldots, x_{j-1}, x_{j+2}, \ldots, x_{2N}), 
			& \quad \text{if }\{j, j+1\}\in\alpha , \\
			0 ,
			& \quad \text{if }\{j, j+1\} \not\in \alpha ,
		\end{cases}
	\end{align*}
	where $\xi \in (x_{j-1}, x_{j+2})$ (with the convention that $x_0 = -\infty$ and  $x_{2N+1} = +\infty$), and $\alpha/\{k,l\}$ denotes the link pattern in $\LP_{N-1}$ obtained by removing $\{k,l\}$ from $\alpha$ and then relabeling the remaining indices so that they are the first $2(N-1)$ positive integers. 
	\item The functions are positive and satisfy the following power-law bound:
	\begin{align*}
		0<\PartF_{\alpha}(x_1, \ldots, x_{2N})\le\prod_{\{k,l\}\in\alpha}|x_k-x_l|^{-2b}, \quad \text{for all }x_1<\cdots<x_{2N}. 
	\end{align*}
\end{itemize}

The uniqueness when $\kappa\in (0,8)$ of such collection of functions were proved in~\cite{FloresKlebanPDE2}. The existence when $\kappa\in (0,4]$ were proved in~\cite{PeltolaWuGlobalMultipleSLEs}. For other results related to the existence of such functions, see~\cite{FloresKlebanPDE3, KytolaPeltolaPurePartitionFunctions, WuHyperSLE}. 

We extend the definition of $\PartF_{\alpha}$ to more general
polygons $(\Omega; x_1, \ldots, x_{2N})$ whose marked boundary points $x_1, \ldots, x_{2N}$ lie on sufficiently regular boundary segments (e.g. $C^{1+\eps}$ for some $\eps>0$) as 
\begin{align}\label{eqn::PartF_def_polygon}
	\PartF_{\alpha}(\Omega; x_1, \ldots, x_{2N}):=\prod_{j=1}^{2N}|\varphi'(x_j)|^b\times \PartF_{\alpha}(\varphi(x_1), \ldots, \varphi(x_{2N})),
\end{align}
where $\varphi$ is any conformal map from $\Omega$ onto $\HH$ with $\varphi(x_1)<\cdots<\varphi(x_{2N})$. 
In this section, we are interested in the polygon $(\unitD; \exp(2\ii\theta^1), \ldots, \exp(2\ii\theta^{2N}))$ with $\theta^1<\cdots<\theta^{2N}<\theta^1+\pi$, and we denote 
\begin{align}\label{eqn::PPF_unitD}
	\LG_{\alpha}\left(\theta^1, \ldots, \theta^{2N}\right)=\PartF_{\alpha}\left(\unitD; \exp\left(2\ii\theta^1\right), \ldots, \exp\left(2\ii\theta^{2N}\right)\right).
\end{align}
By direct calculation, $\LG_{\alpha}$ satisfies the following PDE: for all $j\in\{1,\ldots,2N\}$, 
\begin{equation}\label{eqn::PDEcircle}
	\left[\frac{1}{2}\partial_j^2+\sum_{k\neq j}\left(a\cot(\theta^k-\theta^j)\partial_k-ab\csc^2(\theta^k-\theta^j)\right)-2a\tilde{b}\right]\LG_\alpha(\theta^1, \ldots, \theta^{2N})=0,
\end{equation}
and the following power-law bound:
\begin{equation}\label{eq::G_PLB}
	0< \LG_\alpha(\theta^1,\ldots,\theta^{2N})\leq 2^{-2Nb}\prod_{\{k,l\}\in\alpha}|\sin(\theta^k-\theta^l)|^{-2b}, \quad \text{for all } \theta^1<\cdots<\theta^{2N}<\theta^1+\pi.
\end{equation}

Next, we will describe the Loewner evolution of chordal $N$-$\SLE_{\kappa}$. To this end, we first introduce the usual parameterization. 
We fix $N\geq 1$ and let  $\bs{\theta}=(\theta^1,\ldots,\theta^{2N})$ with $\theta^1<\theta^2<\cdots<\theta^{2N}<\theta^1+\pi$. 
Let $\bs{\eta}=(\eta_1,\ldots, \eta_{2N})$ be a $2N$-tuple of continuous simple curves $\eta_j: [0,t_j]\to \overline{\unitD}$ such that $\eta_j(0)=\exp(2\ii \theta^j)$ and $\eta_j(0,t_j)\subset\unitD \setminus\{0\}$. For $j\in \{1,\ldots,2N\}$, let $D_{t}^j$ be the connected components of $\unitD\setminus \eta_j([0,t])$ containing the origin. Let $g_{t}^j: D_{t}^j\to \unitD$ be the unique conformal transformations with $g_{t}^j(0)=0$ and $(g_{t}^j)'(0)>0$. 
We say that $\eta_j=(\eta_j(t), 0\leq t< t_j)$ has \textit{$a$-usual  parameterization} if we have 
\begin{align*}
	(g_t^j)'(0)=\exp(2at), \quad \forall t<t_j.
\end{align*}
Under this parameterization, $g_t^j$ satisfies the following radial Loewner equation (see, e.g.,~\cite[Proposition~3.1]{HealeyLawlerNSidedRadialSLE})
\begin{align*}
	\partial_t {g}_t^j(w)=2a{g}^j_t(w) \frac{\exp(2\ii\xi_t^j)+{g}^j_t(w)}{\exp(2\ii\xi_t^j)-{g}_t^j(w)},\quad \forall w\in \unitD\setminus \eta_j([0,t]),\quad \forall t<t_j.
\end{align*} 
We say that $(\eta_j(t), 0\leq t<t_j)$ is the radial Loewner chain with $a$-usual parameterization and with driving function $(\xi_t^j, 0\leq t<t_j)$. 

Let $(\gamma_1, \ldots, \gamma_N)\sim\PP_{\alpha}^{(\bs{\theta})}$ be the chordal $N$-$\SLE_{\kappa}$ in polygon $(\unitD; \exp(2\ii\theta^1), \ldots, \exp(2\ii\theta^{2N}))$ associated to $\alpha\in\LP_N$. We view it as a $2N$-tuple of continuous simple curves $\bs{\eta}=(\eta_1, \ldots, \eta_{2N})$ as described in Section~\ref{subsec::intro_Green}. 
From~\cite[Proposition~4.10]{PeltolaWuGlobalMultipleSLEs} and a standard argument as~\cite[Theorem~3]{SchrammWilsonSLECoordinatechanges}, for $j\in\{1, \ldots, 2N\}$, if we parameterize $\eta_j$ with $a$-usual parameterization, then its driving function $(\xi^j_t,t\geq 0)$ satisfies 
\begin{equation}\label{eq::marginal}
	\begin{cases}
		\ud\xi_t^j=\ud B_t^j+(\partial_j\log\LG_\alpha)(V^1_t,\ldots,V^{j-1}_t,\xi_t^j,V^{j+1}_t,\ldots,V^{2N}_t)\ud t, \quad \xi_0^j=\theta^j;\\
		\ud V_t^k=a\cot(V_t^k-\xi_t^j)\ud t, \quad V_0^k=\theta^k, \quad k\in\{1,\ldots,j-1,j+1,\ldots,2N\};
	\end{cases}
\end{equation}
where $(B^j_t,t\geq 0)$ is a standard Brownian motion.

\subsection{Preliminaries: $2N$-sided radial $\SLE$}
\label{subsec::pre_2NradialSLE}

Fix $\theta^1<\cdots<\theta^{2N}<\theta^1+\pi$ and write $\bs{\theta}=(\theta^1, \ldots, \theta^{2N})$. Recall from~\eqref{eqn::PF_multiplerSLE} that  
\begin{align*}
	\LG_*(\theta^1,\ldots,\theta^{2N}):=\prod_{1\leq j<k\leq 2N}|\sin(\theta^k-\theta^j)|^{a}.
\end{align*}
We consider $2N$ radial curves $\bs{\eta}=(\eta_1,\ldots,\eta_{2N})$ in the polygon $(\unitD; \exp(2\ii\theta^1), \ldots, \exp(2\ii\theta^{2N}))$ with $a$-common parameterization, and define the following normalized conformal maps:
\begin{itemize}
	\item $g_{t}^j: \unitD\setminus\eta_j([0,t])\to\unitD$ is conformal with $g_{t}^j(0)=0$, $(g_{t}^j)'(0)>0$, $1\leq j\leq 2N$.
	\item $g_{t}: \unitD\setminus \bigcup_{j=1}^{2N}\eta_j([0,t])\to \unitD$ is conformal with $g_{t}(0)=0$, $g_{t}^\prime(0)=\exp(4aNt)>0$. 
	\item $g_{t,j}: \unitD\setminus\bigcup_{k\ne j}g_{t}^{j}(\eta_k([0,t]))\to\unitD$ is conformal with $g_{t,j}(0)=0$, $g_{t,j}^\prime(0)>0$, $1\leq j\leq 2N$. 
\end{itemize}
Then $g_t=g_{t,j}\circ g_{t}^j$ for $1\leq j\leq 2N$. 
Let $h_{t}^j$ be the covering conformal map of $g_{t}^j$, i.e., 
\[g_{t}^j(e^{2\ii\zeta})=\exp\left(2\ii h_{t}^j(\zeta)\right) \text{ with } h_0^j(\zeta)=\zeta \text{ for }\zeta\in\HH. \]
Define $h_{t}$ and $h_{t,j}$ similarly for $g_{t}$ and $g_{t,j}$. 
Denote by $(\xi^j_{t}, t\geq 0)$ the driving function of $\eta_j$ as a radial Loewner curve, and  $(\bs{\theta}_t=(\theta_t^1,\ldots,\theta_t^{2N}),t\geq 0)$ the driving functions of $(\eta_1,\ldots,\eta_{2N})$ as a radial Loewner chain with $a$-common paramterization as described in Section~\ref{subsec::Ising_intro}.

Let $\PP$ denote the probability measure under which $(\eta_1,\ldots,\eta_{2N})$ are $2N$ independent radial $\SLE_\kappa$ in $\unitD$ started from $(\exp(2\ii\theta^1), \ldots, \exp(2\ii\theta^{2N}))$ respectively. 
From~\cite[Proposition~3.11]{HealeyLawlerNSidedRadialSLE}, 
\begin{equation}\label{eqn::mart*}
	M_t^*=g_t'(0)^{\frac{(4N^2-1)a}{4}-(2N-1)\tilde{b}}
	\prod_{j=1}^{2N}h_{t,j}'(\xi_t^j)^b g_{t,j}'(0)^{\tilde{b}}\times \LG_*(\theta^1_t,\ldots,\theta^{2N}_t)\exp\left(\frac{c}{2}\sum_{j=1}^{2N}\mu_t^j\right)
\end{equation}
is a local martingale with respect to $\PP$, 
where $\mu_t^j$ is defined by 
	\begin{equation*}
		\mu_{t}^j=-\frac{a}{6}\int_0^{t}Sh_{s,j}(\xi_{s}^j)\ud s+\frac{a}{3}\int_0^{t}(1-h_{s,j}'(\xi_{s}^j)^2)\ud s,
\end{equation*}
and $Sh:=\left(\frac{h^{\prime\prime}}{h'}\right)'-\frac{1}{2}\left(\frac{h^{\prime\prime}}{h'}\right)^2=\frac{h^{\prime\prime\prime}}{h^\prime}-\frac{3}{2}\left(\frac{h^{\prime\prime}}{h'}\right)^2$ denotes the Schwarzian derivative of $h$. 
Denote by $\PP_*^{(\bs{\theta})}$ the probability measure obtained by tilting $\PP$ by $M_t^*$, then
\begin{equation*}
	\ud\theta_t^j=\ud B_t^j+2a\sum_{k\ne j}\cot(\theta^j_t-\theta^k_t)\ud t, \quad 1\leq j\leq 2N,
\end{equation*}
where $\{B^j\}_{1\leq j\leq 2N}$ are independent standard Brownian motions under $\PP_*^{(\bs{\theta})}$. Note that this is the same SDE as in~\eqref{eq::2NRadialBES}. 

The radial Loewner chain $(\bs{\eta}(t)=(\eta_1(t),\ldots,\eta_{2N}(t)), t\geq 0)$ with $a$-common parameterization and with driving functions $(\bs{\theta}_t=(\theta^1_t,\ldots,\theta^{2N}_t),t\geq 0)$ satisfying SDE~\eqref{eq::2NRadialBES} is called the \textit{$2N$-sided radial $\SLE_\kappa$} in polygon $(\unitD; \exp(2\ii\theta^1), \ldots, \exp(2\ii\theta^{2N}))$, introduced in~\cite{HealeyLawlerNSidedRadialSLE}. 
Recall that the lifetime $T$ is the first time that $\bs{\eta}$ hits the origin or that two of the curves in $\bs{\eta}$ meet. From~\cite[Proposition~5.4]{HealeyLawlerNSidedRadialSLE}, we have
$\PP_*^{(\bs{\theta})}[T=\infty]=1$.
We then recall some conclusions from~\cite{HealeyLawlerNSidedRadialSLE} about the transition density of $(\bs{\theta}_t, t\geq 0)$. 

For $N\geq 1$, denote by $\mathcal{X}'_{2N}$ the torus $[0,\pi)^{2N}$ with periodic boundary conditions and  $\mathcal{X}_{2N}$ the set of $\bs{\theta}=(\theta^1,\ldots,\theta^{2N})\in \mathcal{X}_{2N}'$ such that we can find representatives with $\theta^1<\cdots<\theta^{2N}<\theta^1+\pi$. 
For each $v>0$ and $\bs{\theta}\in\mathcal{X}_{2N}$, define
\begin{equation}\label{eqn::def_F_f}
	F_v(\bs{\theta})=\prod_{1\leq j<k\leq 2N}|\sin(\theta^k-\theta^j)|^v,\quad 
	f_v(\bs{\theta})=\mathcal{I}_v^{-1} F_v(\bs{\theta}), \quad\text{with}\quad \mathcal{I}_v=\int_{\mathcal{X}_{2N}}F_v(\bs{\theta})\ud \bs{\theta}.
\end{equation}
Note that $F_v(\bs{\theta})\in (0,1]$ for $\bs{\theta}\in\mathcal{X}_{2N}$, and $\LG_*(\bs{\theta})$ in~\eqref{eqn::PF_multiplerSLE} is exactly $F_a(\bs{\theta})$. 

\begin{lemma}{\cite[Proposition~5.5]{HealeyLawlerNSidedRadialSLE}}
	\label{lem::density*}
	Fix $\kappa\in(0,4]$ and $a=2/\kappa$. 
	Denote by $p_*(t;\bs{\theta},\cdot)$ the transition density for $(\bs{\theta}_t, t\ge 0)$ under $\PP_*^{(\bs{\theta})}$. Then, for any  $\bs{\theta},\bs{\theta}^\prime\in\mathcal{X}_{2N}$, 
	\begin{equation}\label{eqn::density*_asy}
		p_*(t;\bs{\theta},\bs{\theta}^\prime)=p_*(\bs{\theta}')(1+O(e^{-\cvgexp\cdot 4aNt})), 
	\end{equation}
	where $\cvgexp>0$ is a constant depending on $\kappa, N$ and $p_*(\bs{\theta}')$ is the invariant density for $(\bs{\theta}_t,t\geq 0)$: 
	\begin{equation}\label{eqn::inv_density}
		p_*(\bs{\theta}')=f_{4a}(\bs{\theta}'), \quad \bs{\theta}'\in\mathcal{X}_{2N}.
	\end{equation}
\end{lemma}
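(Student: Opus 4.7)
The plan is to use the reversibility of the driving process and spectral gap analysis on the compact state space $\mathcal{X}_{2N}$.

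First, I would verify that $f_{4a}$ is invariant by rewriting the SDE~\eqref{eq::2NRadialBES} in reversible form. A direct computation gives
\[
\partial_{\theta^j}\log F_v(\bs{\theta}) = v\sum_{k\neq j}\cot(\theta^j-\theta^k),
\]
so the drift $2a\sum_{k\neq j}\cot(\theta^j-\theta^k)$ equals $\tfrac{1}{2}\partial_{\theta^j}\log F_{4a}(\bs{\theta})$. Hence the generator reads
\[
L = \tfrac{1}{2}\Delta + \tfrac{1}{2}\nabla\log F_{4a}\cdot\nabla,
\]
which is symmetric on $L^2(f_{4a}\,\ud\bs{\theta})$ with associated Dirichlet form $\mathcal{E}(u,v) = \tfrac{1}{2}\int \nabla u\cdot\nabla v \, f_{4a}\,\ud\bs{\theta}$. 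This identifies $f_{4a}$ as the invariant density~\eqref{eqn::inv_density}.

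Next, for the exponential rate of convergence, I would exploit the compactness of $\mathcal{X}_{2N}$ together with the spectral theory of $L$. Because $a = 2/\kappa \geq 1/2$, the logarithmic repulsion $-\log F_{4a}$ at $\partial\mathcal{X}_{2N}$ is strong enough to ensure the process never reaches the boundary (equivalently, the underlying Dyson-type process has no collisions since $4a \geq 2 > 1$), so the semigroup is well-defined and in fact ultracontractive. This yields a compact resolvent for $L$ in $L^2(f_{4a}\,\ud\bs{\theta})$ and hence a discrete spectrum $0 = \lambda_0 < \lambda_1 \leq \lambda_2 \leq \cdots$. Setting $\cvgexp := \lambda_1/(4aN) > 0$, the spectral expansion
\[
\frac{p_*(t;\bs{\theta},\bs{\theta}')}{f_{4a}(\bs{\theta}')} - 1 = \sum_{n\geq 1}e^{-\lambda_n t}\phi_n(\bs{\theta})\phi_n(\bs{\theta}'),
\]
combined with uniform sup-norm bounds on the orthonormal eigenfunctions $\{\phi_n\}$ coming from ultracontractivity, yields the claimed estimate~\eqref{eqn::density*_asy}.

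The hard part will be proving the strict positivity of the spectral gap $\lambda_1$ together with the ultracontractivity estimate. After conjugation by $F_{4a}^{1/2}$, the operator $L$ becomes a trigonometric Calogero--Sutherland Hamiltonian on the compact simplex $\mathcal{X}_{2N}$, whose eigenfunctions are Jack polynomials with coupling parameter depending on $\kappa$; in principle $\lambda_1$ can be read off from this known spectrum. Alternatively, one can combine a Bakry--\'Emery curvature-dimension bound on compact subsets of $\mathcal{X}_{2N}$ with a boundary control exploiting the logarithmic barrier at the diagonals $\theta^j = \theta^k$. For the purposes of this paper, only the existence of some positive constant $\cvgexp$ depending on $\kappa$ and $N$ is required, which follows already from compactness of the resolvent together with the reversibility structure.
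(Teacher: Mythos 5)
The paper does not actually prove this lemma: it is imported verbatim as \cite[Proposition~5.5]{HealeyLawlerNSidedRadialSLE}, so there is no internal argument to compare yours against. Judged on its own, your route is coherent and is the natural functional-analytic one. The computation identifying the drift of~\eqref{eq::2NRadialBES} as $\tfrac12\partial_{\theta^j}\log F_{4a}$ is correct, and it does exhibit $f_{4a}$ as the reversible (hence invariant) density, recovering~\eqref{eqn::inv_density}. The reduction of~\eqref{eqn::density*_asy} to a spectral gap plus an $L^1\to L^\infty$ bound for the semigroup is also sound: writing $P_t-\Pi=P_{t_0}(P_{t-2t_0}-\Pi)P_{t_0}$ with $\Pi$ the projection onto constants turns ultracontractivity at time $t_0$ and the $L^2$ gap into exactly the uniform ratio estimate $\sup_{\bs{\theta},\bs{\theta}'}|p_*(t;\bs{\theta},\bs{\theta}')/f_{4a}(\bs{\theta}')-1|\le Ce^{-\lambda_1 t}$ (note this can only hold for $t\ge t_0>0$; as $t\to0+$ the left side blows up, so the implied constant in~\eqref{eqn::density*_asy} is necessarily for $t$ bounded below, as in the original statement).

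The gap is that the two load-bearing ingredients are asserted rather than proved: (i) ultracontractivity, i.e.\ the on-diagonal bound $p_*(t_0;\bs{\theta},\bs{\theta}')\le C\,f_{4a}(\bs{\theta}')$ uniformly up to the collision set $\{\theta^j=\theta^k\}$ where $f_{4a}$ vanishes, together with conservativeness of the semigroup there; and (ii) simplicity of the bottom eigenvalue $\lambda_0=0$ (irreducibility), without which a discrete spectrum does not yield $\lambda_1>0$. These are precisely where the work lies; they are standard for the trigonometric Calogero--Sutherland conjugate (e.g.\ via the explicit Jack-polynomial spectral resolution, or via a local Doeblin/Harnack argument on compact subsets of $\mathcal{X}_{2N}$ combined with a quantitative return-time estimate), but a complete proof must supply one of these mechanisms. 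A minor point: the non-collision threshold for the pairwise drift $2a\cot(\cdot)$ is $2a\ge\tfrac12$ (equivalently $\beta=4a\ge1$ in the $\beta$-ensemble normalization), not ``$4a>1$'' as a condition on $4a$ per se; the conclusion is of course valid for all $\kappa\in(0,4]$ since $2a\ge\tfrac12$ with room to spare, and indeed $2a\ge1$ ensures the collision set is polar.
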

The invariant density~\eqref{eqn::inv_density} is called Dyson's circular ensemble in~\cite{CardySLEDysonCircularEnsemble, CardySLEDysonCircularEnsembleCorrigendum}.

\subsection{Transition density for chordal $N$-$\SLE_{\kappa}$}
\label{subsec::TransitionDensity}

For $\bs{\theta}=(\theta^1, \ldots, \theta^{2N})\in\mathcal{X}_{2N}$, recall that $\PP_\alpha^{(\bs{\theta})}$ denotes the law of chordal $N$-$\SLE_{\kappa}$ in the polygon $(\unitD; \exp(2\ii\theta^1), \ldots, \exp(2\ii\theta^{2N}))$ associated to link pattern $\alpha\in\LP_N$. Denote by 
$(\bs{\theta}_t=(\theta^1_t,\ldots,\theta^{2N}_t),0\leq t<T)$ the driving function of chordal $N$-$\SLE_{\kappa}$ as a radial Loewner chain with $a$-common parameterization. 
In this section, we study the growth of $(\bs{\theta}_t,0\leq t<T)$ in $\mathcal{X}_{2N}$ under $\PP_\alpha^{(\bs{\theta})}$. Denote by $p_{\alpha}(t; \cdot, \cdot)$ the transition density. By saying that $p_\alpha(t;\cdot,\cdot)$ is the transition
density, we mean that for any $t>0$ and $\bs{\theta}\in\mathcal{X}_{2N}$, 
if $(\bs{\theta}_t, 0\leq t<T)$ starts from $\bs{\theta}$, then for any bounded measurable function $h$ on $\mathcal{X}_{2N}$, 
\begin{equation*}
	\mathbb{E}_\alpha^{(\bs{\theta})}\left[\mathbb{1}_{\{T>t\}}h(\bs{\theta}_t)\right]=\int_{\mathcal{X}_{2N}}p_\alpha(t;\bs{\theta},\bs{\theta}')h(\bs{\theta}')\ud\bs{\theta}'.
\end{equation*}
In particular,
\begin{equation}\label{eq::Tetsimate}
	\PP_\alpha^{(\bs{\theta})}[T>t]=\int_{\mathcal{X}_{2N}}p_\alpha(t;\bs{\theta},\bs{\theta}')\ud\bs{\theta}'.
\end{equation}

The goal of this section is to derive the transition densities  $p_{\alpha}(t; \bs{\theta}, \bs{\theta}')$. To this end, we first derive the Radon-Nikodym derivative between $\PP_{\alpha}^{(\bs{\theta})}$ and the law of $2N$ independent radial $\SLE_\kappa$.

\begin{lemma}\label{lem::RN_Palpha_P}
	Fix $\theta^1<\cdots<\theta^{2N}<\theta^1+\pi$ and write $\bs{\theta}=(\theta^1, \ldots, \theta^{2N})$. 
	\begin{itemize}
		\item Recall that $\PP_{\alpha}^{(\bs{\theta})}$ denotes the probability measure of chordal $N$-$\SLE_{\kappa}$ in polygon $(\unitD; \exp(2\ii\theta^1), \ldots, \exp(2\ii\theta^{2N}))$ associated to $\alpha\in\LP_N$ and we view it as a measure on $2N$-tuples of curves $\bs{\eta}=(\eta_1, \ldots, \eta_{2N})$. 
		\item Suppose $\PP$ is the probability measure on $\bs{\eta}=(\eta_1, \ldots, \eta_{2N})$ where $(\eta_1, \ldots, \eta_{2N})$ are independent radial $\SLE_{\kappa}$ in $\unitD$ started from $(\exp(2\ii\theta^1), \ldots, \exp(2\ii\theta^{2N}))$ respectively. 
	\end{itemize}	
	Under the $a$-common parameterization and we use the same notations as in Section~\ref{subsec::pre_2NradialSLE}, 
	the Radon-Nikodym derivative between $\PP_\alpha^{(\bs{\theta})}$ and $\PP$ when both measures are restricted to $\LF_t$ and $\{T>t\}$ is 
	\begin{equation*}
		\frac{\ud \PP_\alpha^{(\bs{\theta})}[\cdot|_{\LF_t\cap\{T>t\}}]}{\ud \PP[\cdot|_{\LF_t\cap\{T>t\}}]}
		=\frac{M^\alpha_t}{M^\alpha_0}, \quad t>0,
	\end{equation*}
	where
	\begin{equation}\label{eqn::mart_alpha}
		M_t^{\alpha}=g_t'(0)^{-2N\tilde{b}}\prod_{j=1}^{2N}h_{t,j}'(\xi_t^j)^b g_{t,j}'(0)^{\tilde{b}}\times\LG_\alpha(\theta^1_t,\ldots,\theta^{2N}_t)\exp\left(\frac{c}{2}\sum_{j=1}^{2N}\mu_t^j\right),  \quad 0\leq t<T. 
	\end{equation}
\end{lemma}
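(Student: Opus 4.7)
The plan is to verify that $M^\alpha_t$ is a positive local martingale under $\PP$ and that the measure $\tilde\PP^\alpha$ defined by tilting $\PP$ with $M^\alpha_t/M^\alpha_0$ coincides with $\PP_\alpha^{(\bs{\theta})}$. The argument parallels the identification of $\PP_*^{(\bs{\theta})}$ via $M^*_t$ recalled in Section~\ref{subsec::pre_2NradialSLE}.

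For the local martingale property, rather than redoing the full Itô calculation from scratch, the clean approach is to study the ratio
\[
N_t \;:=\; \frac{M^\alpha_t}{M^*_t} \;=\; g_t'(0)^{-\tilde b - (4N^2-1)a/4}\,\frac{\LG_\alpha(\bs{\theta}_t)}{\LG_*(\bs{\theta}_t)} \;=\; e^{-\lambda t}\,\frac{\LG_\alpha(\bs{\theta}_t)}{\LG_*(\bs{\theta}_t)}
\]
under the $a$-common parameterization, for an explicit constant $\lambda$ depending only on $\kappa$ and $N$. Since $M^*_t$ is already a positive local martingale under $\PP$ whose tilt gives $\PP_*^{(\bs{\theta})}$ (Healey--Lawler, cf.~\eqref{eqn::mart*}), it suffices to verify that $N_t$ is a local martingale under $\PP_*^{(\bs{\theta})}$. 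Under $\PP_*^{(\bs{\theta})}$ the process $\bs{\theta}_t$ satisfies the Dyson-type SDE~\eqref{eq::2NRadialBES}, so applying Itô's formula to $N_t$ reduces to verifying a PDE for the ratio $\LG_\alpha/\LG_*$. This PDE follows by subtracting the null-vector PDE~\eqref{eqn::PDEcircle} satisfied by $\LG_\alpha$ from the analogous (and elementary, given the product form) PDE satisfied by $\LG_*$; the exponent $-\tilde b - (4N^2-1)a/4$ in $N_t$ is precisely what makes the deterministic drift vanish. Positivity of $M^\alpha_t$ on $\{T>t\}$ is immediate from~\eqref{eq::G_PLB}.

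With $M^\alpha_t$ established as a positive local martingale, I would define $\tilde\PP^\alpha$ by $\ud\tilde\PP^\alpha/\ud\PP = M^\alpha_t/M^\alpha_0$ on $\LF_t \cap \{T>t\}$ and identify it with $\PP_\alpha^{(\bs{\theta})}$ through a marginal characterization. Under $\tilde\PP^\alpha$, reparameterize so that only one curve $\eta_j$ grows (in its $a$-usual parameterization) while the other curves are frozen. By Girsanov applied to $M^\alpha_t$, the factor $\LG_\alpha(\bs{\theta}_t)$ produces exactly the drift $\partial_j \log \LG_\alpha$ appearing in~\eqref{eq::marginal}, while the remaining factors $h_{t,j}'(\xi^j_t)^b g_{t,j}'(0)^{\tilde b} \exp(\tfrac{c}{2}\mu^j_t)$ furnish the standard conformal change-of-coordinates martingale converting a radial $\SLE_\kappa$ in $\unitD$ into a chordal $\SLE_\kappa$ in the slit domain, as in~\cite[Theorem~3]{SchrammWilsonSLECoordinatechanges}. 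The uniqueness of chordal $N$-$\SLE_\kappa$ from~\cite{BeffaraPeltolaWuUniqueness} then forces $\tilde\PP^\alpha = \PP_\alpha^{(\bs{\theta})}$.

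The main obstacle is bookkeeping: writing down the PDE satisfied by $\LG_*$ from its explicit product form, and verifying that the constant $\lambda$ above indeed matches the prescribed exponent $-\tilde b - (4N^2-1)a/4$ of $g_t'(0)$. This involves several interacting conformal weights $a, b, \tilde b$ together with the central charge $c$, but it is a routine algebraic check once the setup is in place. All remaining ingredients — the Itô calculation for $N_t$, the Girsanov computation identifying the marginal drift, and the invocation of uniqueness — are standard.
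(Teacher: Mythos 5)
Your overall architecture matches the paper's: establish that $M^\alpha_t$ is a positive local martingale under $\PP$, tilt, and identify the tilted measure with $\PP_\alpha^{(\bs{\theta})}$ by checking that each curve, grown alone, has the marginal driving SDE~\eqref{eq::marginal} of chordal $N$-$\SLE_\kappa$. Your ratio trick $N_t=M^\alpha_t/M^*_t=g_t'(0)^{-\tilde b-(4N^2-1)a/4}\,\LG_\alpha(\bs{\theta}_t)/\LG_*(\bs{\theta}_t)$ is computed correctly (the exponent is $-A_{2N}$, consistent with~\eqref{eqn::RN_Palpha_P*}), and reducing the martingale property to an eigenfunction identity for $\LG_\alpha/\LG_*$ under the generator of the $2N$-radial Bessel process is a legitimate variant of what the paper does (the paper also factors out a Healey--Lawler martingale, namely $M^\alpha_{\bs{t}}=N_{\bs{t}}\cdot N^\alpha_{\bs{t}}$, but keeps the partition function in the second factor).

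The genuine gap is in the interplay between the two halves. You prove the local martingale property only along the diagonal, i.e.\ under the $a$-common parameterization, where It\^o's formula for $N_t$ requires only the \emph{summed} version of the null-vector system~\eqref{eqn::PDEcircle}. But your identification step then "reparameterizes so that only one curve $\eta_j$ grows while the other curves are frozen." A process that is a local martingale along $\bs{t}=(t,\ldots,t)$ need not be a local martingale in each time parameter separately, and the tilting of $\PP$ by the diagonal process $M^\alpha_t$ does not by itself determine the law of a single curve grown with the others frozen. This is precisely why the paper first proves the $2N$-time-parameter statement (Lemma~\ref{lem::Plocalmart}) for $M^\alpha_{\bs{t}}$, which uses each equation of the system~\eqref{eqn::PDEcircle} individually, and only afterwards specializes to the common parameterization via the time change of~\cite[Lemma~3.2]{HealeyLawlerNSidedRadialSLE}. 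Your argument can be repaired by running the ratio computation at the multi-time level, i.e.\ showing $M^\alpha_{\bs{t}}/M^*_{\bs{t}}$ is a local martingale in each $t_j$ under the (multi-time) measure tilted by $M^*_{\bs{t}}$, which needs the full system rather than its sum. A second, smaller caveat: the uniqueness theorem of~\cite{BeffaraPeltolaWuUniqueness} characterizes measures on \emph{complete} curves via the conditional resampling property, whereas your tilted measure is a priori defined only on initial segments restricted to $\LF_t\cap\{T>t\}$; the paper instead matches the marginal SDE~\eqref{eq::marginal} from~\cite{PeltolaWuGlobalMultipleSLEs} and invokes the commutation/domain Markov property, which is the cleaner route at the level of partial curves.
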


To prove Lemma~\ref{lem::RN_Palpha_P}, we need to introduce a local martingale with $2N$ time parameters.
For $\bs{t}=(t_1,\ldots,t_{2N})\in\mathbb{R}_+^{2N}$, suppose $\{\eta_j([0,t_j])\}_{1\leq j\leq 2N}$ are disjoint. 
We consider the following normalized conformal maps:
\begin{itemize}
	\item $g_{t_j}^j: \unitD\setminus\eta_j([0,t_j])\to\unitD$ is conformal with $g_{t_j}^j(0)=0$, $(g_{t_j}^j)'(0)=\exp(2at_j)>0$, $1\leq j\leq 2N$.
	\item $g_{\bs{t}}: \unitD\setminus \bigcup_{j=1}^{2N}\eta_j([0,t_j])\to \unitD$ is conformal with $g_{\bs{t}}(0)=0$, $g_{\bs{t}}^\prime(0)=\exp(\aleph(\bs{t}))>0$.  
	\item $g_{\bs{t},j}: \unitD\setminus\bigcup_{k\ne j}g_{t_j}^{j}(\eta_k([0,t_k]))\to\unitD$ is conformal with $g_{\bs{t},j}(0)=0$, $g_{\bs{t},j}^\prime(0)>0$, $1\leq j\leq 2N$. 
\end{itemize}
Then $g_{\bs{t}}=g_{\bs{t},j}\circ g_{t_j}^j$ for $1\leq j\leq 2N$. 
Let $h_{t_j}^j$, $h_{\bs{t}}$ and $h_{\bs{t},j}$ be the covering conformal maps of $g_{t_j}^j$, $g_{\bs{t}}$ and $g_{\bs{t},j}$ respectively. 
Denote by $(\xi^j_{t_j}, t_j\geq 0)$ the driving function of $\eta_j$ as a radial Loewner chain with $a$-usual parameterization. Then we have radial Loewner equations
\begin{equation*}
	\partial_{t_j} g_{t_j}^j(z)=2ag_{t_j}^j(z)\frac{\exp(2\ii\xi_{t_j}^j)+g_{t_j}^j(z)}{\exp(2\ii\xi_{t_j}^j)-g_{t_j}^j(z)}, \quad \text{and}\quad 
	\partial_{t_j} h_{t_j}^j(\zeta)=a\cot(h_{t_j}^j(\zeta)-\xi_{t_j}^j).
\end{equation*}
Let $\bs{\theta}_{\bs{t}}=(\theta^1_{\bs{t}},\ldots,\theta^{2N}_{\bs{t}})$ with  $\theta^j_{\bs{t}}=h_{\bs{t},j}(\xi_{t_j}^j)$, $1\leq j\leq 2N$.

\begin{lemma}\label{lem::Plocalmart}
	Suppose $\PP$ is the probability measure on $\bs{\eta}=(\eta_1, \ldots, \eta_{2N})$ where $(\eta_1, \ldots, \eta_{2N})$ are independent radial $\SLE_{\kappa}$ in $\unitD$ started from $(\exp(2\ii\theta^1), \ldots, \exp(2\ii\theta^{2N}))$ respectively. Then the process
	\begin{equation}\label{Malpha_boldt}
		M^{\alpha}_{\bs{t}}=g_{\bs{t}}^\prime(0)^{-2N\tilde{b}}\prod_{j=1}^{2N} h_{\bs{t},j}^\prime(\xi_{t_j}^j)^b g_{\bs{t},j}^\prime(0)^{\tilde{b}}\times \LG_\alpha(\theta^1_{\bs{t}},\ldots,\theta^{2N}_{\bs{t}})\exp\left(\frac{c}{2}\sum_{j=1}^{2N} \mu_{\bs{t}}^j\right)
	\end{equation}
	is a $2N$-time-parameter local martingale with respect to $\PP$, where $\mu_{\bs{t}}^j$ is given by 
		\begin{equation*}
			\mu_{\bs{t}}^j=-\frac{a}{6}\int_0^{t_j}Sh_{\bs{t},j}(\xi_{t_j}^j)\ud t_j+\frac{a}{3}\int_0^{t_j}(1-h_{\bs{t},j}'(\xi_{t_j}^j)^2)\ud t_j.
	\end{equation*}
\end{lemma}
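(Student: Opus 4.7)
The plan is to verify the multi-time local martingale property one coordinate at a time. Because the $2N$ marginal curves are independent radial $\SLE_\kappa$'s under $\PP$, it suffices to fix an index $j\in\{1,\ldots,2N\}$ together with all times $t_k$ for $k\neq j$, and to show that $(M^\alpha_{\bs t})_{t_j\ge 0}$, viewed as a process in $t_j$ only, is a local martingale in the filtration generated by $\xi^j$. Standard Fubini-type arguments for independent parameter filtrations then upgrade this coordinatewise property to the full multi-time local martingale property.

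Fix such $j$. The composition identity $g_{\bs{t}}=g_{\bs{t},j}\circ g^j_{t_j}$ lets us express every $\bs{t}$-dependent factor in $M^\alpha_{\bs{t}}$ as a function of the $t_j$-Loewner flow of $g^j_{t_j}$ with the data on the other curves entering only through $g_{\bs{t},j}$. Concretely, the preimages $\theta^k_{\bs{t}}=h_{\bs{t},k}(\xi^k_{t_k})$ with $k\neq j$ evolve by $\partial_{t_j}\theta^k_{\bs{t}}=a\cot(\theta^k_{\bs{t}}-\theta^j_{\bs{t}})$, while the derivatives of $g_{\bs{t},j}$ and $h_{\bs{t},j}$ at $0$ and at $\xi^j_{t_j}$ satisfy explicit ODEs read off from the radial Loewner equation, and the global factor $g_{\bs{t}}'(0)^{-2N\tilde b}$ factorizes via the chain rule as $(g^j_{t_j})'(0)^{-2N\tilde b}\,g_{\bs{t},j}'(0)^{-2N\tilde b}$. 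Applying It\^o's formula to $\log M^\alpha_{\bs{t}}$ in $t_j$ (with $\ud\xi^j_{t_j}=\sqrt{\kappa}\,\ud B_{t_j}$, the radial drift being absorbed by the Loewner equation), the martingale part has coefficient $\partial_j\log \LG_\alpha(\bs\theta_{\bs t})$, and the drift splits into four pieces: the It\^o second-derivative term $\tfrac12\partial_j^2\log\LG_\alpha$; the first-order drifts $a\cot(\theta^k-\theta^j)\partial_k\log\LG_\alpha$ from the motion of the other preimages; the terms $-ab\csc^2(\theta^k-\theta^j)$ and $-2a\tilde b$ coming from the conformal weights $h_{\bs{t},j}'(\xi^j_{t_j})^b$, $g_{\bs{t},j}'(0)^{\tilde b}$ together with $g_{\bs t,j}'(0)^{-2N\tilde b}$; and the Schwarzian contribution of $\tfrac{c}{2}\mu^j_{\bs t}$, which is designed precisely to cancel the anomaly generated by the change of normalization. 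The sum is exactly the left-hand side of the PDE~\eqref{eqn::PDEcircle} applied to $\log\LG_\alpha$ at $\bs\theta_{\bs{t}}$, multiplied by $M^\alpha_{\bs t}\,\ud t_j$; by~\eqref{eqn::PDEcircle} it vanishes, proving the local martingale property in $t_j$.

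The main obstacle is the bookkeeping of the Schwarzian and conformal-weight terms: one must verify that, with the specific values of $a,b,\tilde b,c$ given by~\eqref{eqn::constatnsabc}, the $t_j$-evolution of $g_{\bs{t}}'(0)^{-2N\tilde b}$, $h_{\bs{t},j}'(\xi^j_{t_j})^b$, $g_{\bs{t},j}'(0)^{\tilde b}$ and $\exp(\tfrac{c}{2}\sum_j\mu^j)$ reassemble into precisely the coefficients appearing in~\eqref{eqn::PDEcircle}. Fortunately this reassembly is exactly the one carried out in~\cite[Proposition~3.11]{HealeyLawlerNSidedRadialSLE} for the one-parameter common-parameterization martingale $M^*_t$. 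The only differences here are that $\LG_\alpha$ replaces $\LG_*$ (both satisfy the same PDE~\eqref{eqn::PDEcircle}) and that the exponent of $g'_{\bs t}(0)$ is $-2N\tilde b$ instead of $\tfrac{(4N^2-1)a}{4}-(2N-1)\tilde b$; since the exponent of $g'_{\bs t}(0)$ contributes only a drift proportional to a constant independent of $\bs\theta_{\bs t}$ (absorbed into the $-2a\tilde b$ term of the PDE), the algebraic identity to be checked is identical to that of~\cite{HealeyLawlerNSidedRadialSLE}, and the conclusion follows.
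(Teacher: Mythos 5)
Your strategy is sound and is essentially the one the paper uses: reduce to a single time direction, apply It\^o's formula, and let the drift vanish through the PDE~\eqref{eqn::PDEcircle}, quoting~\cite{HealeyLawlerNSidedRadialSLE} for the Schwarzian and conformal-weight bookkeeping. The paper packages this slightly differently: it factors $M^{\alpha}_{\bs t}=N_{\bs t}\cdot N^{\alpha}_{\bs t}$, where $N_{\bs t}$ collects $g_{\bs t}'(0)^{-(2N-1)\tilde b}$, the weights $h_{\bs t,j}'(\xi^j)^b g_{\bs t,j}'(0)^{\tilde b}$, the Schwarzian exponential and a compensating $\csc^2$ exponential, and is a $\PP$-local martingale by~\cite{HealeyLawlerNSidedRadialSLE}; then Girsanov gives a measure $\QQ$ under which $N^{\alpha}_{\bs t}=g_{\bs t}'(0)^{-\tilde b}\LG_\alpha(\bs\theta_{\bs t})\exp(-ab\sum_j\int\sum_{k\neq j}\csc^2(\cdot)h_{\bs t,j}'^2\,\ud t_j)$ is checked to be a local martingale directly from~\eqref{eqn::PDEcircle}. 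Your single-pass It\^o computation is equivalent, just with the Girsanov step absorbed into the drift accounting (and note that with the paper's normalization the driving function is $\ud\xi^j=\ud B^j$, not $\sqrt{\kappa}\,\ud B^j$; also the martingale part of $\log M^\alpha$ contains the term $b\,h_{\bs t,j}''(\xi^j)/h_{\bs t,j}'(\xi^j)$ in addition to $\partial_j\log\LG_\alpha$).

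One concrete error: your parenthetical claim that $\LG_*$ and $\LG_\alpha$ ``both satisfy the same PDE~\eqref{eqn::PDEcircle}'' is false, and as stated it makes your final reduction internally inconsistent. The function $\LG_*=F_a$ is an eigenfunction of the same differential operator but with a \emph{different} constant term; this difference in eigenvalue is exactly what forces the exponent of $g_{\bs t}'(0)$ to change from $\frac{(4N^2-1)a}{4}-(2N-1)\tilde b$ in $M^*_t$ to $-2N\tilde b$ in $M^{\alpha}_{\bs t}$ (their difference being $A_{2N}$, cf.~\eqref{eqn::RN_Palpha_P*}). So the ``algebraic identity to be checked'' is not literally identical to that of~\cite{HealeyLawlerNSidedRadialSLE}: one must verify that the constant $-2a\tilde b$ in~\eqref{eqn::PDEcircle} is matched by the drift produced by the modified $g_{\bs t}'(0)$ exponent together with the factors $g_{\bs t,j}'(0)^{\tilde b}$ (this is the content of~\eqref{eqn::Malpha_boldt_aux2} in the paper's proof). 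You clearly understand that the exponent changes, so the fix is only to replace the incorrect parenthetical by the correct statement that $\LG_\alpha$ satisfies~\eqref{eqn::PDEcircle} with constant $-2a\tilde b$, and to check explicitly that this constant is the one generated by the chosen powers of $g_{\bs t}'(0)$ and $g_{\bs t,j}'(0)$; with that repair the argument goes through.
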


\begin{proof}
	Since $\theta_{\bs{t}}^j=h_{\bs{t},j}(\xi_{t_j}^j)$ for $1\leq j\leq 2N$, by It\^{o}'s formula and a standard calculation, we have  
	\begin{equation*}
		\ud \theta_{\bs{t}}^j=h_{\bs{t},j}^\prime(\xi_{t_j}^j)\ud\xi_{t_j}^j-bh_{\bs{t},j}^{\prime\prime}(\xi_{t_j}^j)\ud t_j+a\sum_{k\ne j}\cot(\theta^j_{\bs{t}}-\theta^k_{\bs{t}})h_{\bs{t},k}'(\xi_{t_k}^k)^2\ud t_k, \quad 1\leq j\leq 2N.
	\end{equation*}
	Note that $M_{\bs{t}}^\alpha$ defined in~\eqref{Malpha_boldt} has a decomposition $M_{\bs{t}}^\alpha=N_{\bs{t}}\cdot N_{\bs{t}}^\alpha$, where
	\begin{equation*}
		N_{\bs{t}}=g_{\bs{t}}^\prime(0)^{-(2N-1)\tilde{b}}\prod_{j=1}^{2N} h_{\bs{t},j}^\prime(\xi_{t_j}^j)^b g_{\bs{t},j}^\prime(0)^{\tilde{b}}\times \exp\left(\frac{c}{2}\sum_{j=1}^{2N} \mu_{\bs{t}}^j+ab\sum_{j=1}^{2N}\int_0^{t_j}\sum_{k\ne j}\csc^2(\theta^j_{\bs{t}}-\theta^k_{\bs{t}})h_{\bs{t},j}^\prime(\xi_{t_j}^j)^2\ud t_j\right),
	\end{equation*}
	and
	\begin{equation*}
		N_{\bs{t}}^\alpha=g_{\bs{t}}^\prime(0)^{-\tilde{b}}\LG_\alpha(\theta^1_{\bs{t}},\ldots,\theta^{2N}_{\bs{t}})\times \exp\left(-ab\sum_{j=1}^{2N}\int_0^{t_j}\sum_{k\ne j}\csc^2(\theta^j_{\bs{t}}-\theta^k_{\bs{t}})h_{\bs{t},j}^\prime(\xi_{t_j}^j)^2\ud t_j\right).
	\end{equation*}
	It is proved in~\cite{HealeyLawlerNSidedRadialSLE} that $N_{\bs{t}}$ is a local martingale with respect to $\PP$ satisfying
	\begin{equation*}
		\ud N_{\bs{t}}=N_{\bs{t}}\sum_{j=1}^{2N}b\frac{h_{\bs{t},j}^{\prime\prime}(\xi_{t_j}^j)}{h_{\bs{t},j}^\prime(\xi_{t_j}^j)}\ud\xi_{t_j}^j. 
	\end{equation*}
	Denote by $\QQ$ the probability measure obtained by tilting $\PP$ by $N_{\bs{t}}$. By Girsanov's theorem, under $\QQ$, we have 
	\begin{equation*}
		\ud \theta_{\bs{t}}^j=h_{\bs{t},j}^\prime(\xi_{t_j}^j)\ud B_{t_j}^j+a\sum_{k\ne j}\cot(\theta^j_{\bs{t}}-\theta^k_{\bs{t}})h_{\bs{t},k}'(\xi_{t_k}^k)^2\ud t_k, \quad 1\leq j\leq 2N,
	\end{equation*}
	where $\{B^j\}_{1\leq j\leq 2N}$ are independent standard Brownian motions. 
	
	It remains to prove that $N_{\bs{t}}^\alpha$ is a local martingale with respect to $\mathbb{Q}$. From the PDE~\eqref{eqn::PDEcircle}, we have
	\begin{align}
		\frac{\ud \LG_\alpha(\theta^1_{\bs{t}},\ldots,\theta^{2N}_{\bs{t}})}{\LG_\alpha(\theta^1_{\bs{t}},\ldots,\theta^{2N}_{\bs{t}})}
		=&\sum_{j=1}^{2N}\left[(\partial_j\log\LG_\alpha)(\theta^1_{\bs{t}},\ldots,\theta^{2N}_{\bs{t}})\ud\theta_{\bs{t}}^j+\frac{1}{2}\frac{\partial_j^2\LG_\alpha(\theta^1_{\bs{t}},\ldots,\theta^{2N}_{\bs{t}})}{\LG_\alpha(\theta^1_{\bs{t}},\ldots,\theta^{2N}_{\bs{t}})}h_{\bs{t},j}^\prime(\xi_{t_j}^j)^2\ud t_j\right]\notag\\
		=&\sum_{j=1}^{2N}(\partial_j\log\LG_\alpha)(\theta^1_{\bs{t}},\ldots,\theta^{2N}_{\bs{t}})h_{\bs{t},j}^\prime(\xi_{t_j}^j)\ud B_{t_j}^j
		+2a\tilde{b}\sum_{j=1}^{2N}h_{\bs{t},j}^\prime(\xi_{t_j}^j)^2\ud t_j\notag\\
		&+ab\sum_{j=1}^{2N}\sum_{k\ne j}\csc^2(\theta^k_{\bs{t}}-\theta^j_{\bs{t}})h_{\bs{t},j}^\prime(\xi_{t_j}^j)^2\ud t_j.\label{eqn::Malpha_boldt_aux1}
	\end{align}
	Moreover, $\partial_{t_j} \log g_{\bs{t}}^\prime(0)=\partial_{t_j}\aleph(\bs{t})=2ah_{\bs{t},j}^\prime(\xi_{t_j}^j)^2$ for $1\leq j\leq 2N$ gives
	\begin{align}\label{eqn::Malpha_boldt_aux2}
		\frac{\ud g_{\bs{t}}^\prime(0)^{-\tilde{b}}}{g_{\bs{t}}^\prime(0)^{-\tilde{b}}}=-2a\tilde{b}\sum_{j=1}^{2N}h_{\bs{t},j}^\prime(\xi_{t_j}^j)^2\ud t_j.
	\end{align}
	Combining~\eqref{eqn::Malpha_boldt_aux1} and~\eqref{eqn::Malpha_boldt_aux2}, we see that $N_{\bs{t}}^\alpha$ is a local martingale with respect to $\mathbb{Q}$ satisfying
	\begin{equation*}
		\ud N_{\bs{t}}^\alpha=N_{\bs{t}}^\alpha\sum_{j=1}^{2N}(\partial_j\log\LG_\alpha)(\theta^1_{\bs{t}},\ldots,\theta^{2N}_{\bs{t}})h_{\bs{t},j}^\prime(\xi_{t_j}^j)\ud B_{t_j}^j.
	\end{equation*}
	This completes the proof.   
\end{proof}

\begin{proof}[Proof of Lemma~\ref{lem::RN_Palpha_P}]
	Denote by $\PP_{\alpha}$ the probability measure obtained by tilting $\PP$ by $M^{\alpha}_{\bs{t}}$ in~\eqref{Malpha_boldt}, then for $1\leq j\leq 2N$, 
	\begin{equation}\label{rhSLE_theta}
		\ud \theta^j_{\bs{t}}=h_{\bs{t},j}^\prime(\xi_{t_j}^j)\ud B_{t_j}^j+(\partial_j\log\LG_\alpha)(\theta^1_{\bs{t}},\ldots,\theta^{2N}_{\bs{t}})h_{\bs{t},j}^\prime(\xi_{t_j}^j)^2\ud t_j+a\sum_{k\ne j}\cot(\theta^j_{\bs{t}}-\theta^k_{\bs{t}})h_{\bs{t},k}^\prime(\xi_{t_k}^k)^2\ud t_k, 
	\end{equation}
	where $\{B^j\}_{1\leq j\leq 2N}$ are independent standard Brownian motions under $\PP_\alpha$. 
	We will argue that $\PP_{\alpha}$ is the same as $\PP_{\alpha}^{(\bs{\theta})}$. 
	
	On the one hand, by the domain Markov property of chordal $N$-$\SLE_\kappa$, $(\eta_1,\ldots,\eta_{2N})$ commute with each other in the following sense. 
	Suppose that $\{\eta_j([0,T_j])\}_{1\leq j\leq 2N}$ are disjoint. 
	Let $\tau_j$ be a stopping time before $T_j$ for $1\leq j\leq 2N$. 
	For each $\eta_j$, conditionally on $\bigcup_{k\ne j}\eta_k([0,\tau_k])$, if $g:\unitD\setminus\bigcup_{k\ne j}\eta_k([0,\tau_k])\to\unitD$ is conformal with $g(0)=0$ and $g^\prime(0)>0$, then the $g$-image of $\eta_j$ is a radial Loewner chain in $\unitD$ started from $g(x_j)$ with $a$-usual parameterization and with driving function $(\xi^j_t,t\geq 0)$ satisfying SDE~\eqref{eq::marginal} up to a time change. 
	
	On the other hand, from~\eqref{rhSLE_theta} we know that under the measure $\PP_{\alpha}$ and conditionally on $\bigcup_{k\ne j}\eta_{k}([0,\tau_k])$, $(g(\eta_j(t_j)), t_j\geq 0)$ is a radial Loewner chain in $\unitD$ started from $g(x_j)$ and driven by $(\theta_{t_j}^j,t_j\geq 0)$ satisfying the SDE
	\begin{equation*}
		\ud\theta_{t_j}^j=h_{\bs{t},j}^\prime(\xi_{t_j}^j)\ud B_{t_j}^j+(\partial_j\log\LG_\alpha)(\theta^1_{\bs{t}},\ldots,\theta^{2N}_{\bs{t}})h_{\bs{t},j}^\prime(\xi_{t_j}^j)^2\ud t_j,
	\end{equation*}
	where $\bs{t}=(\tau_1,\ldots,\tau_{j-1},t_j,\tau_{j+1},\ldots,\tau_{2N})$, $t_j\geq 0$. 
	This is the same as SDE~\eqref{eq::marginal} after a time change, which implies that the radial Loewner curves $(\eta_1,\ldots,\eta_{2N})$ under $\PP_\alpha$ locally commute with each other in the sense of the last paragraph. 
	Therefore, the joint law $\PP_{\alpha}^{(\bs{\theta})}$ of chordal $N$-$\SLE_\kappa$ $(\eta_1,\ldots,\eta_{2N})$ coincides with $\PP_\alpha$ up to any stopping time before the lifetime. 
	
	Finally, we parameterize these $2N$-tuple of curves with $a$-common parameterization by making the time change $\ud t_j=h_{\bs{t},j}'(\xi_{t_j}^j)^{-2}\ud t$ (see~\cite[Lemma~3.2]{HealeyLawlerNSidedRadialSLE}), and take the same notations as in Section~\ref{subsec::pre_2NradialSLE}, then the $2N$-time-parameter local martingale~\eqref{Malpha_boldt} becomes $M_t^{\alpha}$ in~\eqref{eqn::mart_alpha}.
	This completes the proof. 
	In addition, the SDE~\eqref{rhSLE_theta} becomes~\eqref{eqn::SDE_chordalNSLE} under $a$-common parameterization, which describes the driving function under $\PP_\alpha^{(\bs{\theta})}$.
\end{proof}

Now, we are ready to derive the transition densities and quasi-invariant densities for $(\bs{\theta}_t, 0\le t<T)$ under $\PP_{\alpha}^{(\bs{\theta})}$.

\begin{lemma}\label{lem::density_alpha}
	Under $\PP_{\alpha}^{(\bs{\theta})}$, $(\bs{\theta}_t, 0\leq t<T)$ is a Markov process with transition density
	\begin{equation}\label{eqn::density_alpha}
		p_\alpha(t;\bs{\theta},\bs{\theta}^\prime)=p_*(t;\bs{\theta},\bs{\theta}^\prime)e^{-\armexp\cdot 4aNt}\frac{G_\alpha(\bs{\theta})}{G_\alpha(\bs{\theta}^\prime)},
	\end{equation}
	where $\armexp$ is the $2N$-arm exponent defined in~\eqref{eqn::armexponent} and $G_{\alpha}=\LG_{*}/\LG_{\alpha}$ is the Green's function in Definition~\ref{def::NSLEGreen}. 
\end{lemma}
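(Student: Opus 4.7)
The plan is to obtain the transition density by computing the Radon--Nikodym derivative of $\PP_{\alpha}^{(\bs{\theta})}$ against $\PP_{*}^{(\bs{\theta})}$ and then using that the dynamics of $(\bs{\theta}_t, t \ge 0)$ under $\PP_{*}^{(\bs{\theta})}$ are already known. Both measures are given as tilts of the same reference measure $\PP$ (i.e., $2N$ independent radial $\SLE_\kappa$) by the local martingales $M_t^{\alpha}$ from Lemma~\ref{lem::RN_Palpha_P} and $M_t^{*}$ from~\eqref{eqn::mart*}, so on $\LF_t \cap \{T>t\}$ we have
\begin{equation*}
\frac{\ud\PP_{\alpha}^{(\bs{\theta})}[\cdot|_{\LF_t\cap\{T>t\}}]}{\ud\PP_{*}^{(\bs{\theta})}[\cdot|_{\LF_t\cap\{T>t\}}]} = \frac{M_t^{\alpha}/M_0^{\alpha}}{M_t^{*}/M_0^{*}}.
\end{equation*}
Evaluating at $t=0$ gives $M_0^{\alpha} = \LG_{\alpha}(\bs{\theta})$ and $M_0^{*}=\LG_{*}(\bs{\theta})$, while in the ratio $M_t^{\alpha}/M_t^{*}$ the common factors $\prod_j h_{t,j}'(\xi_t^j)^b g_{t,j}'(0)^{\tilde{b}}$ and $\exp(\tfrac{c}{2}\sum_j \mu_t^j)$ cancel, leaving only a power of $g_t'(0)$ and the ratio $\LG_{\alpha}(\bs{\theta}_t)/\LG_{*}(\bs{\theta}_t) = 1/G_{\alpha}(\bs{\theta}_t)$.

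Next I would perform the exponent bookkeeping. Under $a$-common parameterization, $g_t'(0) = \exp(4aNt)$, and the remaining power of $g_t'(0)$ in the ratio is $-2N\tilde{b} - \bigl(\tfrac{(4N^2-1)a}{4} - (2N-1)\tilde{b}\bigr) = -\tilde{b} - \tfrac{(4N^2-1)a}{4}$. A direct algebraic check using $a=2/\kappa$ and $\tilde{b}=(\kappa-2)(6-\kappa)/(8\kappa)$ shows that
\begin{equation*}
\tilde{b} + \frac{(4N^2-1)a}{4} = \frac{(\kappa-2)(6-\kappa)+4(4N^2-1)}{8\kappa} = \frac{16N^2-(4-\kappa)^2}{8\kappa} = \armexp.
\end{equation*}
Combining this with the previous step yields the clean Radon--Nikodym formula
\begin{equation*}
\frac{\ud\PP_{\alpha}^{(\bs{\theta})}[\cdot|_{\LF_t\cap\{T>t\}}]}{\ud\PP_{*}^{(\bs{\theta})}[\cdot|_{\LF_t\cap\{T>t\}}]}
= e^{-\armexp\cdot 4aNt}\,\frac{G_\alpha(\bs{\theta})}{G_\alpha(\bs{\theta}_t)}.
\end{equation*}

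Finally, since $\PP_{*}^{(\bs{\theta})}[T=\infty]=1$ by~\cite{HealeyLawlerNSidedRadialSLE}, integrating against an arbitrary bounded test function $h$ on $\mathcal{X}_{2N}$ gives, for any initial point $\bs{\theta}$,
\begin{equation*}
\mathbb{E}_{\alpha}^{(\bs{\theta})}[\mathbb{1}_{\{T>t\}} h(\bs{\theta}_t)]
= \int_{\mathcal{X}_{2N}} p_{*}(t;\bs{\theta},\bs{\theta}')\,e^{-\armexp\cdot 4aNt}\,\frac{G_\alpha(\bs{\theta})}{G_\alpha(\bs{\theta}')}\,h(\bs{\theta}')\,\ud\bs{\theta}',
\end{equation*}
which identifies $p_\alpha$ as claimed in~\eqref{eqn::density_alpha}. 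The Markov property of $(\bs{\theta}_t,0\le t<T)$ under $\PP_{\alpha}^{(\bs{\theta})}$ follows from the Markov property under $\PP_{*}^{(\bs{\theta})}$ together with the fact that the Radon--Nikodym derivative depends only on the endpoints $(\bs{\theta}_0,\bs{\theta}_t)$: restarting at $\bs{\theta}_s$ and applying the same identity on the interval $[s,t]$ gives the Chapman--Kolmogorov relation. The only real step requiring care is the algebraic identity relating $\tilde{b}$, $a$, $N$ and $\armexp$; the rest is bookkeeping of the martingale factors, which is why this lemma is the natural bridge between Lemma~\ref{lem::RN_Palpha_P} and the asymptotic in Proposition~\ref{prop::multiplechordalestimates} via the long-time behavior of $p_{*}$ from Lemma~\ref{lem::density*}.
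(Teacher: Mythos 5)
Your proposal is correct and follows essentially the same route as the paper: the paper's proof likewise forms the Radon--Nikodym derivative $\frac{M_t^{\alpha}/M_0^{\alpha}}{M_t^{*}/M_0^{*}}$, cancels the common martingale factors to obtain $g_t'(0)^{-\armexp}\,G_\alpha(\bs{\theta})/G_\alpha(\bs{\theta}_t)$ with $\armexp=\tilde{b}+\tfrac{(4N^2-1)a}{4}$, and reads off the transition density from the known Markov structure under $\PP_*^{(\bs{\theta})}$. Your extra details (the explicit algebraic check of the exponent identity and the test-function integration) are just fleshing out steps the paper leaves implicit.
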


\begin{proof}
	For each $t>0$, from~\eqref{eqn::mart_alpha}, Lemma~\ref{lem::RN_Palpha_P} and~\eqref{eqn::mart*}, we obtain the Radon-Nikodym derivative of $\PP_\alpha^{(\bs{\theta})}$ against $\PP_*^{(\bs{\theta})}$ when both measures are restricted to $\LF_t$ and $\{T>t\}$: 
	\begin{equation}\label{eqn::RN_Palpha_P*}
		\frac{\ud\PP_{\alpha}^{(\bs{\theta})}[\cdot|_{\LF_t\cap\{T>t\}}]}{\ud\PP_*^{(\bs{\theta})}[\cdot|_{\LF_t}]}
		=\frac{M^{\alpha}_t/M^{\alpha}_0}{M^*_t/M^*_0}
		=g_t^\prime(0)^{-\armexp}\frac{\LG_\alpha(\bs{\theta}_t)/\LG_\alpha(\bs{\theta})}{\LG_*(\bs{\theta}_t)/\LG_*(\bs{\theta})}=e^{-\armexp\cdot 4aNt} \frac{G_\alpha(\bs{\theta})}{G_\alpha(\bs{\theta}_t)},
	\end{equation}
	where 
	\begin{equation*}
		\armexp=\frac{(4N^2-1)a}{4}+\tilde{b}=\frac{16N^2-(4-\kappa)^2}{8\kappa}
	\end{equation*}
	is the $2N$-arm exponent. Since $(\bs{\theta}_t,t\geq 0)$ is a Markov process with transition density $p_*(t;\bs{\theta},\bs{\theta}')$ under $\PP_*^{(\bs{\theta})}$, the Radon-Nikodym derivative gives the transition density $p_{\alpha}(t;\bs{\theta},\bs{\theta}^\prime)$.
\end{proof}

\begin{lemma}
	The constant  
	\begin{equation}\label{eq::LJ_alpha}
		\LJ_\alpha=\int_{\mathcal{X}_{2N}}f_{4a}(\bs{\theta})G_\alpha(\bs{\theta})^{-1}\ud\bs{\theta}
	\end{equation}
	is finite, where $f_{4a}$ is defined in~\eqref{eqn::def_F_f} and $G_{\alpha}$ is Green's function in Definition~\ref{def::NSLEGreen}. We define 
	\begin{equation}\label{eqn::quasi_inv_density_def}
		p_{\alpha}(\bs{\theta})=\LJ_{\alpha}^{-1}f_{4a}(\bs{\theta})G_\alpha(\bs{\theta})^{-1}. 
	\end{equation}
	Then $p_{\alpha}(\bs{\theta})$ is a quasi-invariant density for $(\bs{\theta}_t, 0\leq t<T)$ under $\PP_\alpha^{(\bs{\theta})}$ in the sense that
	\begin{equation}\label{eqn::quasi_inv_density_property}
		\int_{\mathcal{X}_{2N}}p_\alpha(\bs{\theta}) p_\alpha(t;\bs{\theta},\bs{\theta}')\ud \bs{\theta}
		=p_\alpha(\bs{\theta}')e^{-\armexp\cdot 4aNt}. 
	\end{equation}
	Moreover, for any $\bs{\theta},\bs{\theta}^\prime\in \mathcal{X}_{2N}$, we have
	\begin{equation}\label{eqn::quasi_inv_density_cvg}
		p_\alpha(t;\bs{\theta},\bs{\theta}^\prime)=\LJ_\alpha G_\alpha(\bs{\theta})e^{-\armexp\cdot 4aNt} p_\alpha(\bs{\theta}^\prime) (1+O(e^{-\cvgexp\cdot 4aNt})).
	\end{equation}
\end{lemma}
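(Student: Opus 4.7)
The plan is to derive all three assertions directly from the transition density formula
\begin{equation*}
p_\alpha(t;\bs{\theta},\bs{\theta}')=p_*(t;\bs{\theta},\bs{\theta}')\,e^{-\armexp\cdot 4aNt}\,\frac{G_\alpha(\bs{\theta})}{G_\alpha(\bs{\theta}')}
\end{equation*}
of Lemma~\ref{lem::density_alpha}, combined with the invariant-density identification $p_*(\bs{\theta}')=f_{4a}(\bs{\theta}')$ and the exponential convergence $p_*(t;\bs{\theta},\bs{\theta}')=f_{4a}(\bs{\theta}')(1+O(e^{-\cvgexp\cdot 4aNt}))$ provided by Lemma~\ref{lem::density*}.

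For the finiteness of $\LJ_\alpha$, I would bound the integrand using $G_\alpha=\LG_*/\LG_\alpha$, the power-law bound~\eqref{eq::G_PLB}, and the fact that $\LG_*=F_a$ and $f_{4a}\propto F_{4a}$ by~\eqref{eqn::def_F_f}. This yields
\begin{equation*}
f_{4a}(\bs{\theta})\,G_\alpha(\bs{\theta})^{-1}\le C\prod_{\{k,l\}\in\alpha}|\sin(\theta^k-\theta^l)|^{3a-2b}\prod_{\substack{\{k,l\}\notin\alpha\\ 1\le k<l\le 2N}}|\sin(\theta^k-\theta^l)|^{3a}.
\end{equation*}
The arithmetic check $3a-2b=\kappa/\kappa=1$ together with $3a=6/\kappa\ge 3/2$ for $\kappa\in(0,4]$ shows both exponents are strictly positive, so the integrand is in fact \emph{uniformly bounded} on $\mathcal{X}_{2N}$; finiteness of $\LJ_\alpha$ follows since $\mathcal{X}_{2N}$ has finite measure.

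Next, the quasi-invariance identity~\eqref{eqn::quasi_inv_density_property} follows by substituting the definition~\eqref{eqn::quasi_inv_density_def} of $p_\alpha(\bs{\theta})$ together with the displayed formula for $p_\alpha(t;\bs{\theta},\bs{\theta}')$: the factor $G_\alpha(\bs{\theta})$ cancels and the $e^{-\armexp\cdot 4aNt}/G_\alpha(\bs{\theta}')$ pulls out of the integral, reducing the claim to
\begin{equation*}
\int_{\mathcal{X}_{2N}} f_{4a}(\bs{\theta})\,p_*(t;\bs{\theta},\bs{\theta}')\,\ud\bs{\theta}=f_{4a}(\bs{\theta}'),
\end{equation*}
which is exactly the invariance of $f_{4a}=p_*(\cdot)$ under the Markov semigroup of the $2N$-radial Bessel process from Lemma~\ref{lem::density*}.

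Finally, the pointwise asymptotic~\eqref{eqn::quasi_inv_density_cvg} is obtained by inserting~\eqref{eqn::density*_asy} into the formula for $p_\alpha(t;\bs{\theta},\bs{\theta}')$ and rewriting $f_{4a}(\bs{\theta}')/G_\alpha(\bs{\theta}')=\LJ_\alpha\,p_\alpha(\bs{\theta}')$ via~\eqref{eqn::quasi_inv_density_def}. The main subtlety I anticipated is the integrability check in the first step, but the restriction $\kappa\in(0,4]$ is precisely what makes the exponents $3a-2b$ and $3a$ both strictly positive, so this step collapses to boundedness of the integrand; the remaining manipulations are routine bookkeeping once Lemmas~\ref{lem::density*} and~\ref{lem::density_alpha} are in hand.
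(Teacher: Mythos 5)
Your proposal is correct and follows essentially the same route as the paper: finiteness of $\LJ_\alpha$ via the identity $f_{4a}G_\alpha^{-1}=\mathcal{I}_{4a}^{-1}F_{3a}\LG_\alpha$ together with the power-law bound~\eqref{eq::G_PLB} and the exponent check $3a-2b=1$, quasi-invariance by reducing to the invariance of $f_{4a}$ under the $p_*$ semigroup from Lemma~\ref{lem::density*}, and the asymptotic by substituting~\eqref{eqn::density*_asy} into~\eqref{eqn::density_alpha}. The only difference is that you spell out the cancellation in the quasi-invariance computation more explicitly than the paper does.
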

\begin{proof}
	Note that 
	\begin{equation*}
		f_{4a}(\bs{\theta})G_\alpha(\bs{\theta})^{-1}
		=\mathcal{I}_{4a}^{-1}F_{4a}(\bs{\theta})\LG_\alpha(\bs{\theta})\LG_*(\bs{\theta})^{-1}
		=\mathcal{I}_{4a}^{-1}F_{3a}(\bs{\theta})\LG_\alpha(\bs{\theta}).
	\end{equation*}
	From the power-law bound~\eqref{eq::G_PLB} and the relation $2b=3a-1$, we know that $F_{3a}(\bs{\theta})\LG_\alpha(\bs{\theta})$ is bounded on $\mathcal{X}_{2N}$. Thus $\LJ_{\alpha}$ is a finite constant.
	The relation~\eqref{eqn::quasi_inv_density_property} follows from Lemma~\ref{lem::density*} and Lemma~\ref{lem::density_alpha}. 
	Finally, we have
	\begin{align*}
		p_\alpha(t;\bs{\theta},\bs{\theta}^\prime)
		&=e^{-\armexp\cdot 4aNt}p_*(t;\bs{\theta},\bs{\theta}^\prime)\frac{G_\alpha(\bs{\theta})}{G_\alpha(\bs{\theta}^\prime)}\tag{due to~\eqref{eqn::density_alpha}}\\
		&=e^{-\armexp\cdot 4aNt} f_{4a}(\bs{\theta}^\prime)(1+O(e^{-\cvgexp\cdot 4aNt}))\frac{G_\alpha(\bs{\theta})}{G_\alpha(\bs{\theta}^\prime)}\tag{due to~\eqref{eqn::density*_asy}}\\
		&=\LJ_\alpha G_\alpha(\bs{\theta})e^{-\armexp\cdot 4aNt}p_\alpha(\bs{\theta}^\prime)(1+O(e^{-\cvgexp\cdot 4aNt}))\tag{due to~\eqref{eqn::quasi_inv_density_def}},
	\end{align*}
	which proves~\eqref{eqn::quasi_inv_density_cvg}.
\end{proof}

\subsection{Proof of Proposition~\ref{prop::multiplechordalestimates}, Theorem~\ref{thm::NChordalSLEGreenFunction} and Corollary~\ref{cor::chordaltoradial}}\label{subsec::proofs}

\begin{proof}[Proof of Proposition~\ref{prop::multiplechordalestimates}]
	For each $\bs{\theta}=(\theta^1,\ldots,\theta^{2N})\in\mathcal{X}_{2N}$, we have
	\begin{align*}
		\PP_{\alpha}^{(\bs{\theta})}[T>t]
		&=\int_{\mathcal{X}_{2N}}p_\alpha(t;\bs{\theta},\bs{\theta}^\prime)\ud\bs{\theta}^\prime\tag{due to~\eqref{eq::Tetsimate}}\\
		&=\int_{\mathcal{X}_{2N}}e^{-\armexp\cdot 4aNt}p_*(t;\bs{\theta},\bs{\theta}^\prime)\frac{G_\alpha(\bs{\theta})}{G_\alpha(\bs{\theta}^\prime)}\ud \bs{\theta}^\prime \tag{due to~\eqref{eqn::density_alpha}}\\
		&=\int_{\mathcal{X}_{2N}}e^{-\armexp\cdot 4aNt} f_{4a}(\bs{\theta}^\prime)(1+O(e^{-\cvgexp\cdot 4aNt}))\frac{G_\alpha(\bs{\theta})}{G_\alpha(\bs{\theta}^\prime)}\ud \bs{\theta}^\prime\tag{due to~\eqref{eqn::density*_asy}}\\
		&=\LJ_\alpha G_\alpha(\bs{\theta})e^{-\armexp\cdot 4aNt} (1+O(e^{-\cvgexp\cdot 4aNt})),
	\end{align*}
	where $\LJ_{\alpha}$ is defined in~\eqref{eq::LJ_alpha}. This completes the proof. 
\end{proof}

\begin{proof}[Proof of Theorem~\ref{thm::NChordalSLEGreenFunction}]
	For $r>0$, we denote
	\begin{equation*}
		P(\bs{\theta};r):=\PP_\alpha^{(\bs{\theta})}[\dist(0,\gamma_j)< r, 1\leq j\leq N],
	\end{equation*} 
	then it suffices to prove the asmyptotic
	\begin{equation}\label{eqn::NChordalSLEGreen_ASY}
		P(\bs{\theta};r)=C\cdot G_\alpha(\bs{\theta})r^{\armexp}(1+O(r^{u})), \quad \text{as }r\to 0+,
	\end{equation}
	where $C\in (0,\infty)$ is a constant depending on $\kappa, N, \alpha$ and $u>0$ is a constant depending on $\kappa, N$. To prove~\eqref{eqn::NChordalSLEGreen_ASY}, on the one hand, we will  compare this probability with $\PP_\alpha^{(\bs{\theta})}[T>t]$ for proper $t$, whose asymptotic is known due to  Proposition~\ref{prop::multiplechordalestimates}. On the other hand, we define 
	\begin{equation}\label{eqn::def_Pr}
		P(r):=\int_{\mathcal{X}_{2N}}p_\alpha(\bs{\theta})P(\bs{\theta};r)\ud\bs{\theta},
	\end{equation}
	where $p_\alpha(\bs{\theta})$ is the quasi-invariant density for $(\bs{\theta}_t,0\leq t<T)$ under $\PP_\alpha^{(\bs{\theta})}$ defined in~\eqref{eqn::quasi_inv_density_def}, and we will prove the asymptotic 
	\begin{equation}\label{eqn::Asymptotic_Pr}
		P(r)=Lr^{\armexp}(1+O(r)), \quad \text{as }r\to 0+, 
	\end{equation}
	for some constant $L>0$. Combining~\eqref{eqn::Asymptotic_Pr} and~\eqref{eqn::quasi_inv_density_cvg}, we will obtain~\eqref{eqn::NChordalSLEGreen_ASY}.

	First, to compare the events $\{\text{dist}(0,\gamma_j)<r, 1\leq j\leq N\}$ and $\{T>t\}$, we regard $(\gamma_1,\ldots,\gamma_N)$ as a $2N$-tuple of curves $(\eta_1,\ldots,\eta_{2N})$ as described in Section~\ref{subsec::intro_Green} and derive the relation between different time parameterizations of the curves. Suppose that for each $1\leq j\leq 2N$, $\eta_j$ is parameterized with $a$-usual parameterization. Then we get a $2N$-time parameter $\bs{t}=(t_1,\ldots,t_{2N})$. Recall the normalized conformal maps $g_{t_j}^j$, $g_{\bs{t}}$, $g_{\bs{t},j}$ and the corresponding covering conformal maps defined in Section~\ref{subsec::TransitionDensity}. A standard calculation as in~\cite[Section~3.1]{ZhanGreen2SLE} gives that
	\begin{equation*}
		\ud\log g_{\bs{t}}^\prime(0)=\ud\aleph(\bs{t})=2a\sum_{j=1}^{2N}h_{\bs{t},j}^\prime(\xi_{t_j}^j)^2\ud t_j.
	\end{equation*}
	For each $1\leq j\leq 2N$, we have  $h_{\bs{t},j}^\prime(\xi_{t_j}^j)=|g_{\bs{t},j}^\prime(e^{2\ii\xi_{t_j}^j})|\in(0,1]$, and for $\bs{t}=(0,\ldots,0,t_j,0,\ldots,0)$, we have $\aleph(\bs{t})=2at_j$. Therefore,  for $\bs{t}=(t_1, \ldots, t_{2N})$, 
	\begin{equation*}
		2a\max_{1\leq j\leq 2N}t_j\leq \aleph(\bs{t})\leq 2a(t_1+\cdots+t_{2N}).
	\end{equation*}
	From~\cite[Lemma~3.2]{HealeyLawlerNSidedRadialSLE}, there exists a time change $\bs{t}=\bs{t}(t)=(t_1(t),\ldots,t_{2N}(t))$ with $t_j^\prime(t)=h_{\bs{t},j}^\prime(\xi_{t_j}^j)^{-2}$ for $1\leq j\leq 2N$, such that $(\bs{\eta}(t)=(\eta_1(t_1(t)),\ldots,\eta_{2N}(t_{2N}(t))),0\leq t<T)$ is parameterized with $a$-common parameterization. In this case, we have $\aleph(t)=4aNt$, which gives the relation 
	\begin{equation*}
		\max_{1\leq j\leq 2N}t_j\leq 2Nt\leq t_1+\cdots+t_{2N},
	\end{equation*}
	or we may write it as 
	\begin{equation}\label{eqn::time_relation}
		t\leq t_j(t)\leq 2Nt, \quad \text{for all }1\leq j\leq 2N.
	\end{equation}

	Second, we prove the relation
	\begin{equation}\label{eqn::LHS}
		\{\dist(0,\gamma_j)< r, 1\leq j\leq N\}\subset\left\{T>-\frac{1}{4aN}\log(4r)\right\}.
	\end{equation}
	Suppose that for each $1\leq j\leq 2N$, $\eta_j$ is parameterized with $a$-usual parameterization, and we define $\tau_j=\min(\inf\{t_j:\dist(0,\eta_j(0,t_j])\leq r\}, t_j(T))$. On the event $\{\dist(0,\gamma_j)< r, 1\leq j\leq N\}$, for each $1\leq j\leq N$, we have $\tau_{a_j}\wedge\tau_{b_j}<\infty$, and by Koebe's $1/4$ Theorem, $\tau_{a_j}\wedge\tau_{b_j}>-\frac{1}{2a}\log(4r)$. 
	Note that $\{\eta_j(0,\tau_j),1\leq j\leq 2N\}$ are still disjoint, then by~\eqref{eqn::time_relation}, the lifetime $T$ of $(\eta_1,\ldots,\eta_{2N})$ has a lower bound
	\begin{equation*}
		T>\frac{1}{2N}\max_{1\leq j\leq 2N}\tau_j>-\frac{1}{4aN}\log(4r).
	\end{equation*}
	This completes the proof of~\eqref{eqn::LHS}.

	Third, we will derive the asymptotic~\eqref{eqn::Asymptotic_Pr}. Fix some $s\in[0,-\frac{1}{4aN}\log(4r)]$. On the one hand, we have $\{\dist(0,\gamma_j)<r, 1\leq j\leq N\}\subset\{T>s\}$ from~\eqref{eqn::LHS}. On the other hand, when $T>s$ happens, by Koebe's $1/4$ Theorem and~\eqref{eqn::time_relation}, we have 
	\begin{equation*}
		\dist(0,\eta_j(0,s])\geq \frac{1}{4}e^{-2at_j(s)}\geq \frac{1}{4}e^{-4aNs}\geq r, \quad \text{for all }1\leq j\leq 2N.
	\end{equation*}
	Therefore, the event $\{\dist(0,\gamma_j)< r, 1\leq j\leq N\}$ 
	happens if and only if $\{T>s\}$ holds, and the event  $\{\text{dist}(0,\gamma_j\setminus(\eta_{a_j}(0,s]\bigcup\eta_{b_j}(0,s]))<r, 1\leq j\leq N\}$ happens. Suppose that $T>s$ happens and let $0<R_1<R_2<1$ be such that
	\begin{equation}\label{eqn::relation_R}
		\frac{e^{-4aNs}R_1}{(1-R_1)^2}=\frac{e^{-4aNs}R_2}{(1+R_2)^2}=r.
	\end{equation}
	By Koebe's distortion Theorem, we have
	\begin{equation*}
		\{|z|<R_1\}\subset g_s(\{|z|<r\})\subset\{|z|<R_2\}.
	\end{equation*}
	Then the domain Markov property for chordal $N$-$\SLE_\kappa$ gives that, 
	\begin{equation}\label{eq::DMPbound}
		P(\bs{\theta}_s;R_1)\leq \PP_\alpha^{(\bs{\theta})}[\dist(0,\gamma_j)< r, 1\leq j\leq N|\mathcal{F}_s,T>s]\leq P(\bs{\theta}_s;R_2).
	\end{equation}
	From the definition~\eqref{eqn::def_Pr} of $P(r)$ and the property~\eqref{eqn::quasi_inv_density_property} of quasi-invariant density $p_\alpha$, we obtain
	\begin{equation}\label{eqn::bound_Pr}
		e^{-\armexp\cdot 4aNs} P(R_1)\leq P(r)\leq e^{-\armexp\cdot 4aNs}P(R_2).
	\end{equation}
	To find the asymptotic of $P(r)$ as $r\to 0+$, we define for $r>0$, 
	\begin{equation*}
		q(r)=r^{-\armexp}P(r). 
	\end{equation*}
	Suppose $r,R\in(0,1)$ with $r<\frac{R}{(1+R)^2}$. Then we have the following estimates: 
	\begin{itemize}
		\item The upper bound: by choosing $s>0$ such that $e^{4aNs}=\frac{R/r}{(1+R)^2}$, we conclude from~\eqref{eqn::bound_Pr} that
		\begin{equation*}
			P(r)\leq e^{-\armexp\cdot 4aNs}P(R)=\left(\frac{R/r}{(1+R)^2}\right)^{-\armexp}P(R), \text{ i.e., } q(r)\leq (1+R)^{2\armexp}q(R).
		\end{equation*}
		\item The lower bound: by choosing $s>0$ such that $e^{4aNs}=\frac{R/r}{(1-R)^2}$, we conclude from~\eqref{eqn::bound_Pr} that
		\begin{equation*}
			P(r)\geq e^{-\armexp\cdot 4aNs}P(R)=\left(\frac{R/r}{(1-R)^2}\right)^{-\armexp}P(R), \text{ i.e., } q(r)\geq (1-R)^{2\armexp}q(R).
		\end{equation*}
	\end{itemize}
	Combining the upper and lower bounds, we obtain 
	\begin{equation}\label{eqn::q_estimate}
		(1-R)^{2\armexp}q(R)\leq q(r)\leq (1+R)^{2\armexp}q(R),\quad \text{for }r<\frac{R}{(1+R)^2}.
	\end{equation}
	Thus, $\lim_{r\to 0+}\log q(r)$ converges to a finite number, which implies that $\lim_{r\to 0+}q(r)$ converges to a finite positive number, denoted as $L$. Fixing $R\in(0,1)$ and sending $r\to 0+$ in~\eqref{eqn::q_estimate}, we get
	\begin{equation*}
		L(1+R)^{-2\armexp}\leq q(R)\leq L(1-R)^{-2\armexp}.
	\end{equation*}
	which implies the asymptotic $q(r)=L(1+O(r))$ as $r\to 0+$, and proves~\eqref{eqn::Asymptotic_Pr}.

	Finally, we come to the proof of the asymptotic~\eqref{eqn::NChordalSLEGreen_ASY}. From the relation~\eqref{eqn::relation_R}, for $j=1,2$, we have  $R_j=e^{4aNs}r(1+O(e^{4aNs}r))$ as $r\to 0+$. Then, we obtain
	\begin{align*}
		P(\bs{\theta};r)
		=&\PP_\alpha^{(\bs{\theta})}[T>s]\times P(e^{4aNs}r(1+O(e^{4aNs}r)))\tag{due to~\eqref{eqn::quasi_inv_density_cvg} and~\eqref{eq::DMPbound}}\\
		=&\LJ_\alpha G_\alpha(\bs{\theta})e^{-\armexp\cdot 4aNs}(1+O(e^{-\cvgexp\cdot 4aNs}))\\
		&\times L(e^{4aNs}r(1+O(e^{4aNs}r)))^{\armexp}(1+O(e^{4aNs}r))\tag{due to~\eqref{eqn::multiplechordalestimates} and~\eqref{eqn::Asymptotic_Pr}}\\
		=&\LJ_\alpha L G_\alpha(\bs{\theta})r^{A_{2N}}(1+O(e^{-\cvgexp\cdot 4aNs})+O(e^{4aNs}r)), \quad \text{as }r\to 0+.
	\end{align*}
	Taking $C=\LJ_\alpha L>0$, $u=\frac{\cvgexp}{1+\cvgexp}>0$ and $s>0$ such that $e^{-(1+\cvgexp)4aNs}=r$, we obtain~\eqref{eqn::NChordalSLEGreen_ASY} and complete the proof. 
\end{proof}

\begin{proof}[Proof of Corollary~\ref{cor::chordaltoradial}]
	Fix $s>0$. 
	We first show the convergence in~\eqref{eqn::TV_Palpha_P*}. 
	For $p>0$, we define 
	\[E_p=\{\eta_1,\ldots,\eta_{2N} \text{ all hit } e^{-p}\unitD\}. \]
	For $p>0$ large enough (such that $\text{dist}(0,\eta_j(0,s])\geq \frac{1}{4}e^{-4aNs}>e^{-p}$), from the domain Markov property of chordal $N$-$\SLE_\kappa$, we have 
	\begin{equation*}
		\frac{\ud\PP_{\alpha\cond p}^{(\bs{\theta})}\left[\cdot|_{\LF_s}\right]}{\ud\PP_\alpha^{(\bs{\theta})}\left[\cdot|_{\LF_s\cap \{T>s\}}\right]}
		=\frac{\PP_\alpha^{(\bs{\theta}_s)}[\tilde{E}_p]}{\PP_\alpha^{(\bs{\theta})}[E_p]},
		\quad\text{where }\tilde{E}_p=\{\eta_1,\ldots,\eta_{2N} \text{ all hit } g_s(e^{-p}\unitD)\}. 
	\end{equation*}
	From the similar analysis after~\eqref{eqn::relation_R}, we know that by Koebe's distortion Theorem, there exist $p_1,p_2>0$ with $e^{-p_j}=e^{-p+4aNs}(1+O(e^{-p}))$ such that $e^{-p_1}\unitD\subset g_s(e^{-p}\unitD)\subset e^{-p_2}\unitD$. Then we have
	\begin{equation*}
		\PP_\alpha^{(\bs{\theta}_s)}[E_{p_1}]\leq \PP_\alpha^{(\bs{\theta}_s)}[\tilde{E}_p]\leq \PP_\alpha^{(\bs{\theta}_s)}[E_{p_2}],
	\end{equation*}
	and by~\eqref{eqn::NChordalSLEGreen_ASY}, 
	\begin{equation*}
		\PP_\alpha^{(\bs{\theta}_s)}[\tilde{E}_p]=C\cdot G_\alpha(\theta^1_s,\ldots,\theta^{2N}_s)e^{\armexp\cdot 4aNs}e^{-p\armexp}(1+O(e^{-up})).
	\end{equation*}
	Therefore, we obtain
	\begin{equation*}
		\frac{\ud\PP_{\alpha\cond p}^{(\bs{\theta})}\left[\cdot|_{\LF_s}\right]}{\ud\PP_\alpha^{(\bs{\theta})}\left[\cdot|_{\LF_s\cap \{T>s\}}\right]}
		=\frac{\PP_\alpha^{(\bs{\theta}_s)}[\tilde{E}_p]}{\PP_\alpha^{(\bs{\theta})}[E_p]}
		=e^{\armexp\cdot 4aNs}\frac{G_\alpha(\theta^1_s,\ldots,\theta^{2N}_s)}{G_\alpha(\theta^1,\ldots,\theta^{2N})}(1+O(e^{-up})).
	\end{equation*}
	Combining with~\eqref{eqn::RN_Palpha_P*}, we conclude that 
	\begin{equation*}
		\frac{\ud\PP_{\alpha\cond p}^{(\bs{\theta})}\left[\cdot|_{\LF_s}\right]}{\ud\PP_*^{(\bs{\theta})}\left[\cdot|_{\LF_s}\right]}
		=1+O(e^{-up})\to 1 \text{ uniformly as } p\to \infty,
	\end{equation*}
	which implies that
	\begin{equation*}
		\lim_{p\to\infty}\mathbb{E}_*^{(\bs{\theta})}\left[\left(1-\frac{\ud \PP_{\alpha\cond p}^{(\bs{\theta})}\left[\cdot|_{\LF_s}\right]}{\ud \PP_*^{(\bs{\theta})}\left[\cdot|_{\LF_s}\right]}\right)^+\right]=0. 
	\end{equation*}
	This gives the convergence in total-variation distance~\eqref{eqn::TV_Palpha_P*}. 
	
	Next, we show the convergence in~\eqref{eqn::dist_TV_nu}. 	
	From the end of the proof of Lemma~\ref{lem::RN_Palpha_P}, we know that the driving function $(\bs{\theta}_t,t\geq 0)$ under $\PP_\alpha^{(\bs{\theta})}$ with $a$-common parameterization satisfies SDE~\eqref{eqn::SDE_chordalNSLE}. 
	We denote by $\PP_{\alpha\cond t}^{(\bs{\theta})}$ the measure $\PP_\alpha^{(\bs{\theta})}$ conditional on $\{T>t\}$. 
	Since $\nu_{\alpha\cond t}^{(\bs{\theta})}$ is the induced measure on the driving function $(\bs{\theta}_t,t\geq 0)$ under $\PP_{\alpha\cond t}^{(\bs{x})}$ and $\nu_*^{(\bs{\theta})}$ is the induced measure on the driving function under $\PP_*^{(\bs{\theta})}$, it remains to prove that for any $s>0$, 
	\begin{equation}\label{eqn::dist_TV_PP}
		\lim_{t\to\infty}\dist_{TV}\left(\PP_{\alpha\cond t}^{(\bs{\theta})}[\cdot\cond_{\LF_s}],\PP_*^{(\bs{\theta})}[\cdot\cond_{\LF_s}]\right)=0.
	\end{equation}
	For any $t>s>0$, from the domain Markov property of chordal $N$-$\SLE_\kappa$ and Proposition~\ref{prop::multiplechordalestimates}, we have \begin{equation*}
		\frac{\ud\PP_{\alpha\cond t}^{(\bs{\theta})}\left[\cdot|_{\LF_s}\right]}{\ud\PP_\alpha^{(\bs{\theta})}\left[\cdot|_{\LF_s\cap\{T>s\}}\right]}
		=\frac{\PP_\alpha^{(\bs{\theta}_s)}[T>t-s]}{\PP_\alpha^{(\bs{\theta})}[T>t]}
		=e^{\armexp\cdot 4aNs}\frac{G_\alpha(\theta^1_s,\ldots,\theta^{2N}_s)}{G_\alpha(\theta^1,\ldots,\theta^{2N})}(1+O(e^{-\cvgexp\cdot 4aNt})).
	\end{equation*}
	Combining with~\eqref{eqn::RN_Palpha_P*}, we conclude that 
	\begin{equation*}
		\frac{\ud\PP_{\alpha\cond t}^{(\bs{\theta})}\left[\cdot|_{\LF_s}\right]}{\ud\PP_*^{(\bs{\theta})}\left[\cdot|_{\LF_s}\right]}
		=1+O(e^{-\cvgexp\cdot 4aNt})\to 1 \text{ uniformly as } t\to \infty.
	\end{equation*}
	This gives the convergence in total-variation distance~\eqref{eqn::dist_TV_PP} thus~\eqref{eqn::dist_TV_nu}, and completes the proof.
\end{proof}

\section{Ising model} \label{sec::Ising_model}
In this section, we will consider convergence of Ising interfaces in polygons and in annulus. 
To this end, we first give the notion of convergence of polygons and convergence of annulus. 

\paragraph{Convergence of polygons}
Let $l\geq 1$. A \textit{discrete polygon}  is a finite graph $\Omega$ of $\mathbb{Z}^2$ with $l$ marked points $x_1,\ldots,x_{l}$ on its boundary, whose precise definition is given as follows. Let $x_1,\ldots,x_{l}\in V(\mathbb{Z}^2)$ be $l$ distinct vertices. Let $(x_1x_{2}),\ldots, (x_{l}x_1)$ be $l$ paths in $\mathbb{Z}^2$ satisfying the following 4 conditions:
(1): all the paths are edge-avoiding and $(x_{j-1}^{}x_j^{})\cap (x_{j}^{}x_{j+1}^{})=\{x_j^{}\}$ for  $1\le j\le l$;
(2): if $j\notin \{k-1,k+1\}$, then $(x_{k}^{}x_{k+1}^{})\cap (x_{j}^{}x_{j+1}^{})=\emptyset$;
(3): the loop obtained by concatenating all $(x_{j}^{}x_{j+1}^{})$ is a Jordan curve;
(4): the infinite connected component of $ \mathbb{Z}^2\setminus (\cup_j (x_{j}^{}x_{j+1}^{}))$ is on the right of the path $(x_1^{}x_{2}^{})$. 
Given $\{(x_j^{}x_{j+1}^{})\}_{1\leq j\leq l}$, the discrete polygon $(\Omega^{};x_1^{},\ldots,x_{l}^{})$ is defined in the following way: $\Omega^{}$ is a subgraph of $\mathbb{Z}^2$ induced  by the vertices lying on or enclosed by the loop obtained by concatenating all $(x_j x_{j+1})$. The (discrete) dual annular polygon $(\Omega^{\bullet};x_1^{\bullet},\ldots,x_{l}^{\bullet})$ is defined via the primal annular polygon as follows. The dual graph $\Omega^{\bullet}$ of $\Omega^{}$ is defined to be a subgraph of $(\mathbb{Z}^2)^{\bullet}$ with edge set consisting of edges lying on the inner of the planar domain corresponding to $\Omega^{}$ and vertex set consisting of endpoints of these edges. For $1\leq j\leq l$, let $x_j^{\bullet}$ be a dual vertex on the boundary of $\Omega^{\bullet}$ that  is nearest to $x_{j}^{}$. 

	Let $\{\Omega^{\delta}\}_{\delta>0}$ and $\Omega$ be planar simply connected domains $\Omega^{\delta}, \Omega\subsetneq\C$, all containing a common point $u$. We say that $\Omega^{\delta}$ converges to $\Omega$ in the sense of \textit{kernel convergence} with respect to $u$,
	denoted $\Omega^{\delta}\to\Omega$, if the following 2 conditions are satisfied: (1): 
	every $z\in\Omega$ has some neighborhood $U_z$ such that $U_z\subset\Omega^{\delta}$, for all small enough $\delta > 0$; and 
	(2): for every point $p\in\partial\Omega$, there exists a sequence $p^{\delta}\in\partial\Omega^{\delta}$ such that $p^{\delta}\to p$. 
	If $\Omega^{\delta}\to\Omega$ in the sense of kernel convergence with respect to $u$, the same convergence holds with respect to any $\tilde{u}\in\Omega$. 
	We say that $\Omega^{\delta}\to\Omega$ in the \textit{Carath\'{e}odory sense} as $\delta\to 0$.  
	Note that $\Omega^{\delta}\to \Omega$ in the Carath\'{e}odory sense if and only if there exist conformal maps $\varphi_{\delta}$ from $\unitD$ onto $\Omega^{\delta}$, and a conformal map $\varphi$ from $\unitD$ onto $\Omega$, such that $\varphi_{\delta}\to\varphi$ locally uniformly on $\unitD$ as $\delta\to 0$.
	For {polygons}, we say that a sequence of discrete polygons $(\Omega^{\delta}; x_1^{\delta}, \ldots, x_{l}^{\delta})$
	converges as $\delta \to 0$ to an polygon $(\Omega; x_1, \ldots, x_{l})$ in the \textit{Carath\'{e}odory sense}
	if there exist conformal maps $\varphi_{\delta}$ from $\unitD$ onto $\Omega^{\delta}$,
	and a conformal map $\varphi$ from $\unitD$ onto $\Omega$,
	such that $\varphi_{\delta} \to \varphi$ locally uniformly on $\unitD$,
	and $\varphi_{\delta}^{-1}(x_j^{\delta}) \to \varphi^{-1}(x_j)$ for all $1\le j\le l$. 
	
	Note that Carath\'{e}odory convergence allows wild behavior of the boundaries around the marked points. 
	In order to ensure 
	precompactness of the interfaces in Proposition~\ref{prop::Ising_polygon}, we need a convergence on polygons stronger than the above Carath\'{e}odory convergence. 
	We say that a sequence of discrete polygons $(\Omega^{\delta}; x_1^{\delta}, \ldots, x_{l}^{\delta})$  
	converges as $\delta \to 0$ to an polygon $(\Omega; x_1, \ldots, x_{l})$ in the \textit{close-Carath\'{e}odory sense} if it converges in the Carath\'{e}odory sense, and in addition, for each $j\in\{1, \ldots, l\}$, we have $x_j^{\delta}\to x_j$ as $\delta\to 0$ and the following is fulfilled: 
	Given a reference point $u\in\Omega$ and 
	$r>0$ small enough, let $S_r$ be the arc of $\partial B(x_j,r)\cap\Omega$ disconnecting (in $\Omega$) $x_j$ from $u$ and from all other arcs of this set. We require that, for each $r$ small enough and for all sufficiently small $\delta$ (depending on $r$), the boundary point $x_j^{\delta}$ is connected to the midpoint of $S_r$ inside $\Omega^{\delta}\cap B(x_j,r)$. This notion was introduced by Karrila in~\cite{KarrilaConformalImage}, see in particular~\cite[Theorem~4.2]{KarrilaConformalImage}. (See also~\cite{KarrilaMultipleSLELocalGlobal} and~\cite{ChelkakWanMassiveLERW}.) 
	
Later, we will consider Ising interfaces in polygons with alternating $\oplus/\ominus$ boundary conditions~\eqref{eqn::bound_condi_polygon} in Section~\ref{sec::Ising_polygon}; and we will consider Ising interfaces in polygons with alternating $\oplus/\ominus/\free$ boundary conditions~\eqref{eqn::bounda_condi_one_free} in Section~\ref{sec::local_one_free}. 
	
\paragraph{Convergence of annular polygons}
A \textit{discrete annular polygon} 
	is a finite doubly connected subgraph $\mathcal{A}$ of $ \mathbb{Z}^2$ (that is, a finite connected subgraph of $\mathbb{Z}^2$ whose complement in $\mathbb{Z}^2$ has two connected components) with $2N$ marked points $x_1^{}, x_{2}^{},\ldots, x_{2N}^{}$ on the outer boundary in counterclockwise order, whose precise definition is given as follows. 
	Let $x_{1}^{},\ldots, x_{2N}^{}\in V(\mathbb{Z}^2)$ be $2N$ distinct vertices. Let $(x_1^{}x_{2}^{}), \ldots , (x_{2N}^{}x_{1}^{})$ be $2N$ paths in $\mathbb{Z}^2$ satisfying the following 5 conditions:
	(1): all the paths are edge-avoiding and $(x_{j-1}^{}x_j^{})\cap (x_{j}^{}x_{j+1}^{})=\{x_j^{}\}$ for  $1\le j\le 2N$, where $x_0:=x_{2N}$ and $x_{2N+1}:=x_{1}$ by convention;
	(2): if $j\notin \{k-1,k+1\}$, then $(x_{k}^{}x_{k+1}^{})\cap (x_{j}^{}x_{j+1}^{})=\emptyset$;
	(3): the loop obtained by concatenating all $(x_{j}^{}x_{j+1}^{})$ is a Jordan curve, denote it by $\outb^{}$ and call it the \textit{outer boundary} of $\mathcal{A}$; 
	(4): the infinite connected component of $ \mathbb{Z}^2\setminus (\cup_j (x_{j}^{}x_{j+1}^{}))$ is on the right of the path $(x_1^{}x_{2}^{})$;
	(5): there exists a Jordan curve $\intb^{}$ consisting of edges on $ \mathbb{Z}^2$ such that $\intb^{}$ is contained in the bounded planar domain enclosed by $\outb^{}$ and that there is no edge in $ \mathbb{Z}^2$ connecting $\intb^{}$ to $\outb^{}$, and we call $\intb$ the \textit{inner boundary} of $\mathcal{A}$. 
	Given $\{(x_j^{}x_{j+1}^{})\}_{1\leq j\leq 2N}$, the discrete annular polygon $(\mathcal{A}^{};x_1^{},\ldots,x_{2N}^{})$ is defined in the following way: $\mathcal{A}^{}$ is a subgraph of $\mathbb{Z}^2$ induced by vertices lying on or enclosed by the two disjoint Jordan curves $\outb^{}$ and $\intb^{}$.
	The (discrete) dual annular polygon $(\mathcal{A}^{\bullet};x_1^{\bullet},\ldots,x_{2N}^{\bullet})$ is defined via the primal annular polygon as follows. The dual graph $\mathcal{A}^{\bullet}$ of $\mathcal{A}^{}$ is defined to be a subgraph of $(\mathbb{Z}^2)^{\bullet}$ with edge set consisting of edges lying on the inner of the planar domain corresponding to $\mathcal{A}^{}$ and vertex set consisting of endpoints of these edges. Then $\mathcal{A}^{\bullet}$ is an annular subgraph of $(\mathbb{Z}^2)^{\bullet}$, and we denote by $\outb^{\bullet}$, $\intb^{\bullet}$ its outer and inner boundaries, respectively. For $1\leq j\leq 2N$, let $x_j^{\bullet}$ be a dual vertex of $\outb^{\bullet}$ that  is nearest to $x_{j}^{}$.
	
	We say that a sequence of discrete annular polygons $(\mathcal{A}^{\delta}; x_1^{\delta}, \ldots, x_{2N}^{\delta})$
	converges as $\delta \to 0$ to a polygon $(\mathcal{A}; x_1, \ldots, x_{2N})$ in the \textit{Carath\'{e}odory sense}
	if there exist covering maps $\varphi_{\delta}$ from $\unitD$ onto $\mathcal{A}^{\delta}$,
	and a covering map $\varphi$ from $\unitD$ onto $\mathcal{A}$,
	such that $\varphi_{\delta} \to \varphi$ locally uniformly on $\unitD$,
	and $\varphi_{\delta}^{-1}(x_j^{\delta}) \to \varphi^{-1}(x_j)$ for all $1\le j\le 2N$. 
	The convergence in the \textit{close-Carath\'{e}odory sense} is defined in the same way as for polygons where we only need to replace the simply connected domain $\Omega$ by the annular domain $\LA$. 
	
Later, we will consider Ising interfaces in annulus with boundary conditions~\eqref{eqn::bounda_condi} in Section~\ref{sec::Ising_annulus}. 

\subsection{Convergence of multiple Ising interfaces in polygons} 
\label{sec::Ising_polygon}

In this section, we recall conclusions about the convergence of Ising interfaces in polygons. 
For $\delta>0$, we will consider critical Ising model on a discrete dual  polygon $(\Omega^{\delta,\bullet};x_{1}^{\delta,\bullet},\ldots,x_{2N}^{\delta,\bullet})$ with alternating boundary condition: 
\begin{align} \label{eqn::bound_condi_polygon}
	\oplus\text{ on }\cup_{j=1}^N(x_{2j-1}^{\delta,\bullet}x_{2j}^{\delta,\bullet}),\quad \text{and}\quad \ominus\text{ on }\cup_{j=1}^N(x_{2j}^{\delta,\bullet}x_{2j+1}^{\delta,\bullet}).
\end{align} 
	For each $j\in \{1,2,\ldots, 2N\}$, we define the \textit{Ising interface} $\eta_j^{\delta}$ starting from $x_{j}^{\delta}$ as follows. If $j$ is odd, $\eta_j^{\delta}$ starts from $x_{j}^{\delta}$, traverses on the edges of $\Omega^{\delta}$, and turns at every dual-vertex in such a way that it always has dual-vertices with spin $\oplus$ on its left and spin $\ominus$ on its right. If there is an indetermination when arriving at a vertex, it turns left. If $j$ is even, the interface $\eta_j^{\delta}$ is defined similarly with the left/right switched.  When the discrete polygons $(\Omega^{\delta};x_1^{\delta},\ldots,x_{2N}^{\delta})$ converge in the close-Carath\'{e}odory sense, the tightness of the collection of interfaces $\{(\eta_1^{\delta},\ldots,\eta_{2N}^{\delta})\}_{\delta>0}$ under topology~\eqref{eq::curve_metric_2} is standard nowadays, see e.g.,~\cite[Theorem~4.1]{KarrilaMultipleSLELocalGlobal}; the key ingredient is RSW estimates, see~\cite{KemppainenSmirnovRandomCurves}.

In this section, we will describe the limiting distribution of the interfaces $(\eta_1^{\delta}, \ldots, \eta_{2N}^{\delta})$ under boundary condition~\eqref{eqn::bound_condi_polygon}. 
To this end, we first define the so-called total partition functions for the Ising model in polygons. For $N\geq 1$, we let $\Pi_N$ denote the set of all \textit{pair partitions} $\varpi=\{\{a_1,b_1\},\ldots,\{a_N,b_N\}\}$ of the set $\{1,2,\ldots,2N\}$, that is, partitions of this set into $N$ disjoint two-element subsets $\{a_j,b_j\}\subseteq \{1,2,\ldots,2N\}$, with the convention that 
\begin{align*}
	\text{$a_1<a_2<\cdots< a_N$ and $a_j<b_j$ for $j\in \{1,2,\ldots,N\}$.}
\end{align*}
We also denote by $\mathrm{sgn}(\varpi)$ the sign of the partition $\varpi$ defined as the sign of 
\begin{align*}
	\text{the product }	\prod (a-c)(a-d)(b-c)(b-d)\text{  over pairs of distinct elements $\{a,b\}, \{c,d\}\in \varpi$.}
\end{align*}
Define 
\[\totalH(\mathbb{H};\cdot)\colon \{(x_1,\ldots,x_{2N})\in\R^{2N}: x_1<\cdots<x_{2N}\}\to\R\]
by
\begin{equation}\label{eqn::Ising_totalpartitionfunction}
	\totalH(\mathbb{H};x_1,\ldots,x_{2N}):=\mathrm{Pf}\left(\frac{1}{x_k-x_j}\right)_{j,k=1}^{2N}=\sum_{\varpi \in \Pi_{N}}\mathrm{sgn}(\varpi)\sum_{\{a,b\}\in \varpi}\frac{1}{x_b-x_a}.
\end{equation}
This gives the definition of the total partition function for the polygon $(\HH; x_1, \ldots, x_{2N})$. Note that the total partition function $\totalH$ satisfies the M\"obius covariance rule~\eqref{eqn::COV} with $\kappa=3$ and $b=1/2$, see e.g.,~\cite[Proposition~4.6]{KytolaPeltolaPurePartitionFunctions}. We may extend the definition of $\totalH$ to general polygon $(\Omega; x_1, \ldots, x_{2N})$ whose marked points $x_1, \ldots, x_{2N}$ lie on sufficiently regular boundary segments ($C^{1+\eps}$ for some $\eps>0$) as in~\eqref{eqn::PartF_def_polygon} with $\kappa=3$ and $b=1/2$: 
\begin{equation}\label{eqn::Ising_total_polygon}
\PartF(\Omega; x_1, \ldots, x_{2N})=\prod_{j=1}^{2N}|\varphi'(x_j)|^{1/2}\times \PartF(\HH; \varphi(x_1), \ldots, \varphi(x_{2N})), 
\end{equation}
where $\varphi$ is any conformal map from $\Omega$ onto $\HH$ with $\varphi(x_1)<\cdots<\varphi(x_{2N})$. 

Let $(\Omega;x_1,\ldots,x_{2N})$ be a polygon and write $\boldsymbol{x}=(x_1,\ldots,x_{2N})$. We denote by $\PP_{\kappa=3\cond\alpha}^{(\Omega;\bs{x})}$ the law of chordal $N$-$\SLE_{\kappa}$ in polygon $(\Omega;\boldsymbol{x})$ associated to link pattern $\alpha\in\LP_N$ with $\kappa=3$. 
We then define a probability measure $\mathbb{P}_{\Ising}^{(\Omega;\bs{x})}$ on $2N$-tuples of curves $\bs{\eta}=(\eta_1,\ldots,\eta_{2N})$ in polygon $(\Omega; x_1,\ldots,x_{2N})$ by
\begin{equation}\label{eqn::def_ptotal}
	\PP_{\Ising}^{(\Omega;\boldsymbol{x})}:=\sum_{\alpha\in \LP_N}\frac{\mathcal{Z}_{\alpha}(\Omega;\bs{x})}{\totalH(\Omega;\bs{x})}\times 	\mathbb{P}_{\kappa=3\cond\alpha}^{(\Omega;\bs{x})}, 
\end{equation}
where $\PartF_{\alpha}$ is pure partition function~\eqref{eqn::PartF_def_polygon} with $\kappa=3, b=1/2$ and $\PartF$ is defined in~\eqref{eqn::Ising_totalpartitionfunction} and~\eqref{eqn::Ising_total_polygon}. 

\begin{proposition}\label{prop::Ising_polygon}
Fix a polygon $(\Omega;x_1,\ldots,x_{2N})$ and write $\boldsymbol{x}=(x_1,\ldots,x_{2N})$.  Suppose that a sequence of discrete polygons $(\Omega^{\delta};x_1^{\delta},\ldots,x_{2N}^{\delta})$ converges to $(\Omega;x_1,\ldots,x_{2N})$ in the close-Carath\'{e}odory sense. Consider the Ising model on $\Omega^{\delta,\bullet}$ with boundary condition~\eqref{eqn::bound_condi_polygon}. 
 For $j\in \{1,2,\ldots,2N\}$, let $\eta_j^{\delta}$ be the interface starting from $x_j^{\delta}$. Then  the law of the collection of interfaces $(\eta_1^{\delta},\ldots,\eta_{2N}^{\delta})$ converges weakly to $\PP_{\Ising}^{(\Omega;\bs{x})}$ under the topology induced by~\eqref{eq::curve_metric_2}.
\end{proposition}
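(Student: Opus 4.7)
My plan has three stages: tightness, identification of the conditional law of a subsequential limit given each connectivity, and computation of the connectivity weights.

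First, I would establish tightness of $\{(\eta_1^\delta, \ldots, \eta_{2N}^\delta)\}_{\delta > 0}$ in the topology of~\eqref{eq::curve_metric_2}. Given the close-Carathéodory convergence of the discrete polygons and the RSW estimates for the critical Ising model, this follows from the general precompactness framework of~\cite{KemppainenSmirnovRandomCurves} combined with the boundary-control arguments of~\cite{KarrilaConformalImage, KarrilaMultipleSLELocalGlobal}. The close-Carathéodory assumption is precisely what is needed to guarantee that in any subsequential weak limit $(\eta_1, \ldots, \eta_{2N})$ the curves attach to the prescribed marked points $x_1, \ldots, x_{2N}$, so the limit takes values in $\bigcup_{\alpha\in\LP_N} X_\alpha(\Omega;\bs{x})$.

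Second, for any such subsequential weak limit, I would decompose according to the planar connectivity: with some probability $p_\alpha$ the collection has link pattern $\alpha\in\LP_N$. To identify the conditional law given connectivity $\alpha$, I would show by the domain Markov property of the Ising model, together with the single-interface convergence of~\cite{ChelkakSmirnovIsing, CDCHKSConvergenceIsingSLE}, that each $\eta_j$ has the following marginal property: conditionally on the other $2N-1$ curves, its law is chordal $\SLE_3$ in the pocket of $\Omega\setminus\bigcup_{k\neq j}\eta_k$ containing $x_j$, with endpoints $x_j$ and the appropriate partner determined by $\alpha$. The uniqueness result~\cite[Theorem~1.2]{BeffaraPeltolaWuUniqueness} for chordal $N$-$\SLE_\kappa$ with $\kappa=3$ then pins down the conditional law as $\PP_{\kappa=3\cond\alpha}^{(\Omega;\bs{x})}$.

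Third, I would identify each weight $p_\alpha$ with the ratio $\PartF_\alpha(\Omega;\bs{x})/\PartF(\Omega;\bs{x})$. Following~\cite{IzyurovIsingMultiplyConnectedDomains, PeltolaWuCrossingProbaIsing, KarrilaNewProba, CHI22}, the strategy is to construct discrete holomorphic (Smirnov-type fermionic) observables whose scaling limits link the connectivity probabilities for $(\eta_1^\delta, \ldots, \eta_{2N}^\delta)$ to the pure partition functions $\PartF_\alpha$. On one hand, the Pfaffian structure of boundary spin correlations directly yields the total partition function~\eqref{eqn::Ising_totalpartitionfunction}; on the other hand, applying the asymptotics characterizing $\PartF_\alpha$ (as in Section~\ref{subsec::pre_chordalNSLE}) to expansions of these observables as marked points fuse recovers the individual $\PartF_\alpha$, with uniqueness of solutions to the BPZ system with prescribed boundary asymptotics from~\cite{FloresKlebanPDE2, PeltolaWuGlobalMultipleSLEs} ensuring the identification.

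The hard part will be the third step. The first two are by now standard: tightness from RSW, and identification of the cascade law from single-interface convergence plus the Beffara-Peltola-Wu uniqueness. The third, however, requires extracting the combinatorial factors $\PartF_\alpha$ from the lattice model, which is delicate because the $\PartF_\alpha$ are defined abstractly via BPZ equations and recursive asymptotics rather than by an explicit closed form. Once the weights $p_\alpha = \PartF_\alpha/\PartF$ are established, the decomposition $\PP_{\Ising}^{(\Omega;\bs{x})} = \sum_\alpha p_\alpha \cdot \PP_{\kappa=3\cond\alpha}^{(\Omega;\bs{x})}$ is immediate, and since every subsequential limit agrees with this measure, the full weak convergence follows.
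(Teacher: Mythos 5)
Your outline is correct and coincides with how this result is established in the literature; the paper itself offers no independent argument and simply cites \cite{IzyurovIsingMultiplyConnectedDomains, BeffaraPeltolaWuUniqueness, PeltolaWuCrossingProbaIsing, KarrilaNewProba}, whose combined strategy (RSW tightness, identification of the conditional law via the resampling property and the uniqueness theorem of Beffara--Peltola--Wu, and extraction of the weights $\PartF_\alpha/\PartF$ from fermionic observables) is exactly what you describe. You also correctly locate the genuinely hard step in the identification of the connectivity weights.
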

\begin{proof}
	See \cite{IzyurovIsingMultiplyConnectedDomains, BeffaraPeltolaWuUniqueness, PeltolaWuCrossingProbaIsing, KarrilaNewProba}.
\end{proof}

\subsection{Convergence of multiple Ising interfaces in annulus} 
\label{sec::Ising_annulus}
In this section, we consider the convergence of Ising interfaces in annulus. For $\delta>0$, we will focus on critical Ising model on a discrete dual annular polygon $(\mathcal{A}^{\delta,\bullet};x_{1}^{\delta,\bullet},\ldots,x_{2N}^{\delta,\bullet})$ with boundary conditions~\eqref{eqn::bounda_condi}: 
	\begin{align*} 
		\oplus\text{ on }\cup_{j=1}^N(x_{2j-1}^{\delta,\bullet} x_{2j}^{\delta,\bullet}),\quad \ominus\text{ on }\cup_{j=1}^N(x_{2j}^{\delta,\bullet} x_{2j+1}^{\delta,\bullet}),\quad \text{and}\quad \text{free on }\intb^{\delta,\bullet}.
	\end{align*}
	We then define the Ising interface $\eta_j^{\delta}$ starting from $x_j^{\delta}$ as before.
The goal of this section is the following convergence of the collection of interfaces $(\eta_1^{\delta},\ldots,\eta_{2N}^{\delta})$.
		\begin{proposition} \label{prop::joint_Ising_annulus}
			Let $(\mathcal{A};x_1,\ldots,x_{2N})$ be an annular polygon and write $\boldsymbol{x}=(x_1,\ldots,x_{2N})$. Suppose that a sequence $(\mathcal{A}^{\delta};x_1^{\delta},\ldots,x_{2N}^{\delta})$ of discrete annular polygons converges to $(\mathcal{A};x_1,\ldots, x_{2N})$ in the close-Carath\'{e}odory sense. Consider the critical Ising model on $\mathcal{A}^{\delta,\bullet}$ with boundary conditions~\eqref{eqn::bounda_condi}. For $j\in \{1,2\ldots, 2N\}$, let $\eta_j^\delta$ be the interface starting from $x_j^\delta$. Then the law of the collection of interfaces $(\eta_1^{\delta},\ldots,\eta_{2N}^{\delta})$ converges weakly under the topology induced by~\eqref{eq::curve_metric_2}, and we denote by $\PP_{\Ising}^{(\mathcal{A};\boldsymbol{x})}$ the limiting distribution.
		\end{proposition}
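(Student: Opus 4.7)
The plan is to combine standard precompactness of the interface family with a Radon--Nikodym comparison against the simply connected case already established in Proposition~\ref{prop::Ising_polygon}. First, I would establish tightness of $\{(\eta_1^{\delta},\ldots,\eta_{2N}^{\delta})\}_{\delta>0}$ in the topology induced by~\eqref{eq::curve_metric_2} via the Kemppainen--Smirnov framework. What is needed is a uniform boundary RSW-type crossing estimate for the critical Ising model in discrete annular polygons, valid up to and including the marked points $x_j^{\delta}$; these follow from the techniques of~\cite{CHI22} combined with FKG, and the close-Carath\'{e}odory convergence of the domains supplies the uniform geometric control near each $x_j^{\delta}$ needed to extract continuous limit curves.

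To identify a subsequential limit, I would fix a small $r>0$ and let $\sigma_r^{\delta}$ be the first time at which two of the interfaces come within Euclidean distance $r$ of each other, or one of the interfaces comes within distance $r$ of $\intb^{\delta}$. Let $\tilde{\mathcal{A}}^{\delta}$ denote the simply connected domain obtained by filling in the inner hole of $\mathcal{A}^{\delta}$ (with, say, $\oplus$ on the added vertices). Comparing the annular Ising model to the simply connected Ising model on $\tilde{\mathcal{A}}^{\delta,\bullet}$ with boundary condition~\eqref{eqn::bound_condi_polygon}, the Radon--Nikodym derivative of the annular interface law with respect to the simply connected one, restricted to the $\sigma$-algebra generated by the interfaces up to time $\sigma_r^{\delta}$, is given explicitly by the ratio of conditional Ising partition functions in the remaining (stopped) domain. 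Using the convergence of partition function ratios for the critical Ising model in multiply connected domains from~\cite{IzyurovIsingMultiplyConnectedDomains,CHI22}, this Radon--Nikodym derivative converges as $\delta \to 0$ to a bounded continuous functional of the stopped curves. Combined with Proposition~\ref{prop::Ising_polygon}, this pins down the law of any subsequential limit when restricted to $\LF_{\sigma_r}$.

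Letting $r \to 0$, the stopping times $\sigma_r$ increase to the first time $\sigma$ at which either two interfaces meet or some interface hits $\intb$. By the Ising domain Markov property, the joint law after $\sigma$ reduces to an Ising problem in a simpler collection of domains with strictly fewer free interfaces (an annular polygon with fewer marked points, or simply connected pieces), to which one applies the same scheme inductively, invoking Proposition~\ref{prop::Ising_polygon} for any simply connected components that arise. Tightness together with uniqueness of subsequential limits then yields the desired weak convergence, and the limiting law $\PP_{\Ising}^{(\mathcal{A};\bs{x})}$ is described in terms of Ising partition function ratios and the known simply connected limits.

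The principal obstacle is the convergence of the discrete Radon--Nikodym derivative in the second step: showing that the ratio of Ising partition functions on the annulus (with the mixed $\oplus/\ominus/\free$ boundary that appears after stopping the interfaces) over those in the simply connected reference domain converges to the conformally covariant continuum ratio. This requires the s-holomorphic / fermionic observable analysis of~\cite{CHI22} adapted to annular geometry with free boundary arcs, combined with careful boundary control near the stopping configurations of the interfaces, and is the most delicate technical ingredient of the argument.
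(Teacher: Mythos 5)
Your overall architecture (tightness via Kemppainen--Smirnov, identification of the limit, induction to handle what happens after interfaces meet or hit $\intb$) is sound, but your identification step takes a genuinely different route from the paper's. The paper does not compare the annular interface law to the filled-in polygon via a Radon--Nikodym derivative. Instead it peels off one interface at a time: the marginal law of a single interface $\eta_j^{\delta}$ in the annulus is identified directly by citing~\cite[Corollary~1.5]{CHI22} (Lemma~\ref{lem::local_annulus}), and the single interface in a polygon with one free arc (the configuration that arises after an interface reaches $\intb$) is identified by~\cite[Theorem~3.1]{IzyurovObservableFree} (Lemma~\ref{lem::local_polygon_one_free}); these local statements are upgraded to convergence of the whole curve via Karrila's local-to-global framework (Lemmas~\ref{lem::polygon_localtoglobal} and~\ref{lem::annulus_localtoglobal}), and then the joint law is pinned down by induction on $N$ using the domain Markov property, the close-Carath\'{e}odory convergence of the slit domains (following~\cite{Izy22}), and Proposition~\ref{prop::Ising_polygon} for the simply connected pieces. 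What the paper's route buys is that all of the s-holomorphic observable analysis is outsourced to existing single-curve results; the drift $\partial_j\log\mathcal{H}$ in~\eqref{eqn::loca_annulus_SDE} comes packaged with~\cite{CHI22}.

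The substantive gap in your proposal is precisely the step you flag at the end: the convergence, as $\delta\to 0$, of the discrete Radon--Nikodym derivative between the annular and filled-in models restricted to $\LF_{\sigma_r^{\delta}}$. This is not available as a black box. It amounts to proving convergence of the ratio of Ising partition functions with mixed $\oplus/\ominus/\free$ boundary data on the random stopped slit domains, uniformly over the stopped configurations, to a conformally covariant continuum functional --- which is essentially equivalent in difficulty to re-deriving the fermionic observable analysis of~\cite{CHI22} in annular geometry with a free arc, i.e.\ to re-proving the very input the paper cites. Neither~\cite{IzyurovIsingMultiplyConnectedDomains} nor~\cite{CHI22} states such a partition-function-ratio convergence in the form you need (in particular with uniformity over random slit domains, and with boundedness of the ratio up to the stopping time $\sigma_r^{\delta}$, which requires separate RSW-type a priori bounds). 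Until that step is supplied, the identification of the subsequential limit is not complete. If you want to salvage your scheme, the cleanest fix is to replace the global partition-function comparison by the single-interface convergence results above, which is exactly what the paper does.
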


In this setup, the Ising interfaces always have a convergent subsequence in the curve space $(X^{2N},\metric)$.
\begin{lemma}\label{lem::Ising_tightness}
	Assume the same setup as in 
	Proposition~\ref{prop::joint_Ising_annulus}.
	The family of laws of $\{(\eta_1^{\delta},\ldots,\eta_{2N}^{\delta})\}_{\delta>0}$ is 
	precompact in the space $(X^{2N},\metric)$ of curves under the metric~\eqref{eq::curve_metric_2}.
	Furthermore, given any subsequential limit $(\eta_1,\ldots,\eta_{2N})$, for each $j\in \{1,2,\ldots,2N\}$,  the curve $\eta_j$ satisfies one of the following two properties:
	\begin{itemize}
		\item either $\eta_j$ connecting $x_j$ to another marked points $x_k$, then $\eta_j$ is the time-reversal of $\eta_k$ and $\eta_j\cap \left(\partial\mathcal{A}\cup\left(\cup_{l\neq j,k}\eta_l\right)\right)=\emptyset$;
		\item or $\eta_j$ connecting $x_j$ to $\intb$ and $\eta_j\cap \left(\outb\cup\left(\cup_{k\neq j}\eta_k\right)\right)=\emptyset$. 
	\end{itemize}
\end{lemma}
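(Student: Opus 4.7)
The plan is to first establish precompactness of each individual Ising interface via the Kemppainen--Smirnov framework, and then analyze the topological structure of subsequential limits by combining RSW-type estimates for the critical Ising model with the combinatorial structure of dual interfaces.

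For precompactness, I would treat each $j\in\{1,\ldots,2N\}$ separately. The law of the single interface $\eta_j^\delta$ on $(X,\metric)$ is tight by the general criterion of~\cite{KemppainenSmirnovRandomCurves}, provided one verifies the uniform annulus-crossing condition. This condition reduces to standard FK--Ising RSW estimates in the annular polygon $\mathcal{A}^\delta$; the close-Carath\'eodory convergence around each $x_j$ then upgrades Carath\'eodory control to enough boundary regularity near the marked points to exclude degenerate limit behavior. A diagonal extraction in $\delta$ produces joint convergence in $(X^{2N},\metric)$ of a subsequence to a limit $\boldsymbol{\eta}=(\eta_1,\ldots,\eta_{2N})$ of continuous curves, and simplicity of each $\eta_j$ in the limit follows from the enhanced Kemppainen--Smirnov precompactness results.

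Next, I would analyze where each $\eta_j$ can terminate. In the discrete model, $\eta_j^\delta$ is a simple dual path with $\oplus$-spins on one side and $\ominus$-spins on the other, so it can only terminate at a location where this separation fails: at another alternating marked point $x_k^\delta$ with $k\neq j$, or at the first moment it touches the free inner boundary $\intb^\delta$. Passing to the limit along the subsequence and using that $x_j^\delta\to x_j$ together with close-Carath\'eodory convergence near the marked points, each $\eta_j$ is a continuous simple curve starting at $x_j$ and ending either at some $x_k$ with $k\neq j$ or on $\intb$. Planar non-crossing of the discrete dual interfaces $\{\eta_l^\delta\}$ passes to the limit, so the curves $\{\eta_l\}$ do not cross. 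Moreover, if $\eta_j$ terminates at $x_k$, then $\eta_k^\delta$ in the discrete model is the reversal of $\eta_j^\delta$ up to a boundary lattice adjustment, so $\eta_k$ equals the time-reversal of $\eta_j$.

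The last and most delicate step is to upgrade non-crossing to the stated non-touching property. One needs to exclude the pathological scenario in which $\eta_j$ touches another $\eta_l$ or an arc of $\partial\mathcal{A}$ at a non-endpoint in the limit. Any such contact would force in the discrete approximation a polychromatic arm event with at least five alternating arms in a bulk annulus, or at least three alternating arms in a half-plane annulus near the boundary; both have arbitrarily small probability uniformly in $\delta$ by the Ising polychromatic arm exponent bounds of~\cite{WuAlternatingArmIsing} together with RSW monotonicity and the close-Carath\'eodory convergence at the marked points. I expect this last step to be the main obstacle, as one must carry out these arm estimates uniformly in the annular geometry and near the free inner boundary $\intb$, where one typically needs FK--Ising monotonicity to reduce to a simply connected reference domain before applying the known half-plane arm bounds.
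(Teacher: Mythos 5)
Your proposal follows essentially the same route as the paper's proof, which simply defers to the standard argument of~\cite[Proof of Corollary~1.2]{IzyurovIsingMultiplyConnectedDomains} with RSW estimates~\cite{KemppainenSmirnovRandomCurves} as the key input: tightness from the uniform annulus-crossing condition, identification of the admissible terminal points from the discrete interface structure, and exclusion of spurious touchings via arm-event estimates. One small correction to your last step: a touching of two distinct interfaces at a bulk point forces only four interface strands crossing the intermediate annulus, hence a four-alternating-arm event rather than a five-arm one; this still suffices because the Ising $4$-alternating-arm exponent $\tfrac{16\cdot 2^2-1}{24}=\tfrac{63}{24}$ from~\cite{WuAlternatingArmIsing} exceeds $2$, and likewise two strands near a boundary point yield the three-arm half-plane event you invoke.
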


\begin{proof}
	The proof of this lemma is standard nowadays, see, e.g.,~\cite[Proof of Corollary~1.2]{IzyurovIsingMultiplyConnectedDomains}. The key ingredient is RSW estimates, see~\cite{KemppainenSmirnovRandomCurves}. 
\end{proof}

The goal of this section is to prove Proposition~\ref{sec::Ising_annulus}. 
In fact, this convergence of the joint distributions of $(\eta_1^{\delta}, \ldots, \eta_{2N}^{\delta})$ is a combination of previous conclusions about the convergence of a single Ising interface in various setups. We recall them in Sections~\ref{sec::local_one_free} and~\ref{sec::local_annulus}.  

\subsubsection{Convergence of Ising interfaces in polygons with one free segment} 
\label{sec::local_one_free}
For $\delta>0$, we consider critical Ising model on a discrete dual polygon $(\Omega^{\delta,\bullet};x_1^{\delta,\bullet},\ldots,x_{n+2}^{\delta,\bullet})$ with the following boundary conditions ($x_{0}^{\delta,\bullet}:=x_{n+2}^{\delta,\bullet}$):
	\begin{align} \label{eqn::bounda_condi_one_free}
	(-1)^j \text{ on }(x_{j-1}^{\delta,\bullet}x_{j}^{\delta,\bullet}),\quad \text{for }j\in \{1,\ldots,n+1\},\quad\enspace \text{and}\quad \text{free on }(x_{n+1}^{\delta,\bullet}x_{n+2}^{\delta,\bullet}). 
\end{align}
For each $1\leq j\leq n$, we define Ising interface $\eta_j^{\delta}$ starting from $x_j^{\delta}$. For each $r>0$, we define $T_r^{\delta}(j)$ to be the first time that when $\eta_j^{\delta}$ gets within Euclidean distance $r$ from the other marked points $\{x_1^{\delta},\ldots,x_{j-1}^{\delta},x_{j+1}^{\delta},\ldots,x_{n}^{\delta}\}$ or the free boundary segment $(x_{n+1}^{\delta}x_{n+2}^{\delta})$. 

In order to describe the scaling limits of Ising interfaces under boundary conditions~\eqref{eqn::bounda_condi_one_free}, we have to define the corresponding partition function $\mathcal{R}$. We define
\begin{equation*}
	\mathcal{R}(\mathbb{H};\cdot)\colon \{(x_1,\ldots,x_{n+2})\in\R^{n+2}: x_1<\cdots<x_{n+2}\}\to\R
\end{equation*}
as follows. 	 
	If $n=2m$ with $m\geq 1$, we define 
	\begin{align}\label{eqn::Ising_polygon_onefree_even}
		\mathcal{R}(\mathbb{H};x_1,\ldots,x_{2m+2})=
		&\left(x_{2m+2}-x_{2m+1}\right)^{-1/8}\times \prod_{k=1}^{2m}\frac{1}{\sqrt{(x_{2m+2}-x_k)(x_{2m+1}-x_k)}}\notag\\
		&\times \sum_{\varpi\in\Pi_m}\mathrm{sgn}(\varpi)\prod_{\{a,b\}\in\varpi}\frac{(x_{2m+1}-x_a)(x_{2m+2}-x_b)+(x_{2m+1}-x_b)(x_{2m+2}-x_a)}{x_b-x_a},
	\end{align}
	where $\Pi_n$ and $\mathrm{sgn}(\varpi)$ are defined before~\eqref{eqn::Ising_totalpartitionfunction}. If $n=2m+1$ with $m\geq 0$, we define
	\begin{align}\label{eqn::Ising_polygon_onefree_odd}
		\mathcal{R}(\mathbb{H};x_1,\ldots,x_{2m+3})=&(x_{2m+3}-x_{2m+2})^{3/8}\times \prod_{k=1}^{2m+1}\frac{1}{\sqrt{(x_{2m+2}-x_k)(x_{2m+3}-x_k)}}\notag\\
		&\times \sum_{\varpi\in\Pi_{m+1}}\mathrm{sgn}(\varpi)\prod_{\{a,b\}\in \varpi\atop b\neq 2m+2} \frac{(x_{2m+2}-x_a)(x_{2m+3}-x_b)+(x_{2m+2}-x_b)(x_{2m+3}-x_a)}{x_b-x_a}.
	\end{align}
We may extend the definition of $\mathcal{R}$ to general polygon $(\Omega; x_1, \ldots, x_{n+2})$ whose marked points $x_1, \ldots, x_{n+2}$ lie on sufficiently regular boundary segments ($C^{1+\eps}$ for some $\eps>0$) as
	\begin{equation}\label{eqn::Ising_polygon_onefree_general}
	\mathcal{R}(\Omega;x_1,\ldots,x_{n+2}):=\prod_{j=1}^{n} |\varphi(x_j)|^{1/2}\times |\varphi(x_{n+1})|^{1/16}\times |\varphi(x_{n+2})|^{1/16}\times \mathcal{R}(\mathbb{H};\varphi(x_1),\ldots,\varphi(x_{n+2})),
	\end{equation}
	where $\varphi$ is any conformal map from $\Omega$ onto $\mathbb{H}$ so that $-\infty<\varphi(x_1)<\cdots<\varphi(x_{n+2})<\infty$. 
	In particular, for a polygon $(\mathbb{D};\exp(2\ii\theta^1),\ldots,\exp(2\ii\theta^{n+2}))$ with $\theta^1<\cdots<\theta^{n+2}<\theta^{1}+\pi$, we write
	\begin{equation}\label{eqn::Ising_polygon_onefree_unitD}
		\mathcal{R}(\theta^1,\ldots,\theta^{n+2})=\mathcal{R}(\mathbb{D};\exp(2\ii\theta^1),\ldots,\exp(2\ii\theta^{n+2})). 
\end{equation}
Then we have the following local convergence of a single interface:
\begin{lemma} \label{lem::local_polygon_one_free}
	Fix a polygon $(\Omega;x_1,\ldots,x_{n+2})$. Let $\varphi$ be a conformal map from $\unitD$ onto $\Omega$ and write $\varphi^{-1}(x_j)=\exp(2\ii\theta^j)$ with $\theta^1<\cdots<\theta^{n+2}<\theta^1+\pi$. Suppose that a sequence of discrete polygons $(\Omega^{\delta};x_1^{\delta},\ldots,x_{n+2}^{\delta})$ converges to $(\Omega;x_1,\ldots,x_{n+2})$ in the close-Carath\'{e}odory sense. Consider the Ising model on $\Omega^{\delta,\bullet}$ with boundary conditions~\eqref{eqn::bounda_condi_one_free}. Fix $j\in \{1,2,\ldots,n\}$, let $\eta_j^{\delta}$ be the interface starting from $x_j^{\delta}$. Then for each $r>0$, the curve $(\eta_j^{\delta}(t),0\leq t\leq T_r^{\delta}(j))$ converges weakly under the topology induced by~\eqref{eq::curve_metric_1} to the image under $\varphi$ of the Radial Loewner chain $\mathring{\eta}_j$ with the following driving function with the $\frac{2}{3}$-usual parameterization, up to the first time that the $r$-neighborhood of $\{x_1,\ldots,x_{j-1},x_{j+1},\ldots,x_n\}\cup (x_{n+1}x_{n+2})$ is hit:
		\begin{equation}\label{eqn::local_polygon_one_free_SDE}
		\begin{cases}
			\ud \xi_t^j=\ud B_t+(\partial_j \log\mathcal{R})(V_t^1,\ldots,V_t^{j-1},\xi_t^j,V_t^{j+1},\ldots,V_t^{n+2})\ud t,\quad	\xi_0^j=\theta^j,\\
			\ud V_t^k=\frac{2}{3}\cot(V_t^k-\xi_t^j)\ud t,\quad  V_0^k=\theta^k,\enspace \text{for }k\in \{1,2,\ldots,n+2\}\setminus \{j\},
		\end{cases}
	\end{equation}
where $B$ is a standard Brownian motion and $\LR$ is defined in~\eqref{eqn::Ising_polygon_onefree_even}-\eqref{eqn::Ising_polygon_onefree_unitD}. 
\end{lemma}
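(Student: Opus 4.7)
\medskip
\noindent\emph{Proof proposal.} The plan is to follow the martingale-observable strategy that goes back to Smirnov and Chelkak--Smirnov, adapted to mixed $\oplus/\ominus/\free$ boundary conditions as in~\cite{IzyurovIsingMultiplyConnectedDomains,CHI22}. First I would establish precompactness of the family $\{(\eta_j^\delta(t),0\le t\le T_r^\delta(j))\}_{\delta>0}$ in the metric space $(X,\metric)$ via the RSW-type crossing bounds of Kemppainen--Smirnov~\cite{KemppainenSmirnovRandomCurves}; these estimates are insensitive to whether a given boundary arc carries $\oplus$, $\ominus$, or $\free$ once the domain is approximated in the close-Carath\'eodory sense. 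Combined with the close-Carath\'eodory approximation of $(\Omega^\delta;x_1^\delta,\ldots,x_{n+2}^\delta)\to (\Omega;x_1,\ldots,x_{n+2})$, one extracts subsequential limits $\mathring\eta_j$ that are supported on simple curves staying at definite distance from the other marked points and from $(x_{n+1}x_{n+2})$ up to time $T_r(j)$.

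The core step is to identify the law of any such subsequential limit by exhibiting a discrete holomorphic martingale observable. Fix the exploration $\eta_j^\delta$ and consider the two-point discrete fermionic spinor $F^\delta(\cdot;x_j^\delta)$ with source at $x_j^\delta$, defined on the slit domain $\Omega^\delta\setminus\eta_j^\delta[0,t]$ and with discrete Riemann boundary values dictated by the decorated boundary conditions~\eqref{eqn::bounda_condi_one_free}: in particular, its square is real on the $\oplus/\ominus$ arcs and picks up the appropriate imaginary/free phase along $(x_{n+1}^{\delta,\bullet}x_{n+2}^{\delta,\bullet})$. By the domain Markov property of the Ising model together with the Kadanoff--Ceva analysis carried out in~\cite{CHI22}, suitable ratios of such spinors, once normalized by a local factor at the tip $\eta_j^\delta(t)$, are martingales in $t$. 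Convergence of these observables in the close-Carath\'eodory limit follows from the $s$-holomorphicity machinery of Chelkak--Smirnov and its extensions, and the continuum limits are the holomorphic functions whose normalization constants at conformal image of $\HH$ are precisely the Pfaffian expressions in~\eqref{eqn::Ising_polygon_onefree_even}--\eqref{eqn::Ising_polygon_onefree_odd}. (The exponents $-1/8$ and $3/8$ reflect the boundary scaling dimensions of $\oplus/\free$ and $\ominus/\free$ changes, while $1/\sqrt{\cdot}$ factors encode the spin changes at $x_1,\ldots,x_n$.)

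Having identified the limit of the martingale observable, I would parameterize $\mathring\eta_j$ by radial $\tfrac{2}{3}$-capacity seen from an interior reference point (the image of the origin under $\varphi$), so that its driving function $\xi_t^j$ is a continuous semimartingale while the force points $V_t^k$ evolve by the deterministic $\tfrac{2}{3}\cot(V_t^k-\xi_t^j)$ flow coming from the radial Loewner equation. Writing It\^o's formula for the continuum observable evaluated at the tip and equating the martingale (zero-drift) requirement with the Brownian contribution yields the quadratic variation $\ud\langle\xi^j\rangle_t=\ud t$ and a drift equal to $(\partial_j\log\mathcal{R})(V_t^1,\ldots,\xi_t^j,\ldots,V_t^{n+2})\,\ud t$, exactly matching~\eqref{eqn::local_polygon_one_free_SDE}; this is the usual SLE coordinate change between the chosen observable normalization and the pure partition-function representation. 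Since $\log\mathcal{R}$ is smooth away from collisions of its arguments, the SDE~\eqref{eqn::local_polygon_one_free_SDE} has a unique strong solution up to $T_r(j)$, so every subsequential limit has the same law and the full sequence converges.

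The main obstacle is the second step: verifying convergence and the precise boundary values of the fermionic observable at the $\oplus\to\ominus$ transition points together with the two endpoints of the free arc, and matching its normalization to the explicit Pfaffian $\mathcal{R}$ in~\eqref{eqn::Ising_polygon_onefree_even}--\eqref{eqn::Ising_polygon_onefree_odd}. The $\free$ boundary condition is delicate because it corresponds to a boundary spin operator of dimension $1/16$ rather than $1/2$, so the local behavior of the observable near $x_{n+1}$ and $x_{n+2}$ must be controlled by the discrete analogues of these critical exponents; here one invokes the boundary regularity estimates of~\cite{CHI22}, which are the key technical input. Once this match is established, Proposition~\ref{lem::local_polygon_one_free} reduces to the uniqueness of the SDE as outlined above.
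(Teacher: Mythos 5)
Your sketch reproduces, in outline, the martingale-observable strategy of the reference the paper actually invokes: the paper's entire proof of this lemma is the single line ``this is a special case of~\cite[Theorem~3.1]{IzyurovObservableFree}'', and that theorem is proved by precisely the route you describe (tightness via~\cite{KemppainenSmirnovRandomCurves}, a fermionic observable with Riemann-type boundary values adapted to the free arc, identification of the drift as $\partial_j\log\mathcal{R}$, and uniqueness of the SDE). So the approach is the same in substance; just note that the step you yourself flag as the ``main obstacle'' (convergence and normalization of the observable near the endpoints of the free arc) is exactly the content of the cited theorem, so as written your text defers the hard part to the literature much as the paper does.
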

\begin{proof}
	This is a special case of~\cite[Theorem~3.1]{IzyurovObservableFree}.
\end{proof}

\subsubsection{Convergence of Ising interfaces in annulus} \label{sec::local_annulus}
Now, for $\delta>0$, we consider critical Ising model on a dual annular polygon $(\mathcal{A}^{\delta,\bullet};x_1^{\delta,\bullet},\ldots,x_{2N}^{\delta,\bullet})$ with boundary conditions~\eqref{eqn::bounda_condi}. Let $j\in \{1,2,\ldots,2N\}$. For each $r>0$, we define $T_r^{\delta}(j)$ to be the first time that when $\eta_j^{\delta}$ gets within Euclidean distance $r$ from the other marked points $\{x_1^{\delta},\ldots,x_{j-1}^{\delta},x_{j+1}^{\delta},\ldots,x_{2N}^{\delta}\}$ or the free inner boundary $\intb^\delta$. 

In order to describe the scaling limits of Ising interfaces under boundary conditions~\eqref{eqn::bounda_condi}, we have to define the corresponding partition function $\mathcal{H}$.
 For $p>0$ and an annular polygon $(\mathbb{A}_p;\exp(2\ii\theta^1),\ldots,\exp(2\ii\theta^{2N}))$ with $\theta^1<\cdots<\theta^{2N}<\theta^1+\pi$,  we write $x_j=\exp(2\ii\theta^j)$ for $1\leq j\leq 2N$. We define 
	\begin{align} \label{eqn::def_H}
		\mathcal{H}(\mathbb{A}_p;\theta^1,\ldots,\theta^{2N}):=&\mathrm{Pf}\Big(\left(2/\sqrt{x_jx_k}\right)\cdot \mathrm{cs}\left(\log\left(x_j/x_k\right)\right)\Big)_{j,k=1}^{2N}\notag\\
		=&\sum_{\varpi \in \Pi_{N}}\mathrm{sgn}(\varpi)\sum_{\{a,b\}\in \varpi}\left(2/\sqrt{x_ax_b}\right)\cdot \mathrm{cs}\left(\log\left(x_a/x_b\right)\right),
	\end{align}
	where $\mathrm{cs}$ is the Jacobian elliptic function with half-periods $2K=2p$ and $2K'=2\pi\ii$ (see e.g.,~\cite[Section~16.1]{AbramowitzHandbook}), and where $\Pi_N$ and $\mathrm{sgn}(\varpi)$ are defined before~\eqref{eqn::Ising_totalpartitionfunction}. 
		Now consider a general annular polygon $(\mathcal{A};x_1,\ldots,x_{2N})$ with $\partial_{\mathrm{o}}=\partial \mathbb{D}$ and $x_j=\exp(2\ii\theta^j)$ for $j\in \{1,2,\ldots,2N\}$ with $\theta^1<\cdots<\theta^{2N}<\theta^1+\pi$. Suppose that $\mathcal{A}$ is conformally equivalent to some $\mathbb{A}_p$. We extend the definition of $\mathcal{H}$ via conformal image:
	\begin{equation} \label{eqn::H_general}
		\mathcal{H}(\mathcal{A};\theta^1,\ldots,\theta^{2N}):=\left(\prod_{j=1}^{2N}|\varphi'(x_j)|^{1/2}\right)\times \mathcal{H}(\mathbb{A}_p;\tilde{\theta}^1,\ldots,\tilde{\theta}^{2N}),
	\end{equation}
	where $\varphi$ is any conformal map from $\mathcal{A}$ onto $\mathbb{A}_p$ such that $\varphi(\partial\mathbb{D})=\partial\mathbb{D}$, and $\exp(2\ii\tilde{\theta}_j)=\varphi(x_j)$ where $\tilde{\theta}^1<\cdots<\tilde{\theta}^{2N}<\tilde{\theta}^1+\pi$.
	Then we have the following local convergence of a single interface:
	\begin{lemma} \label{lem::local_annulus}
		Assume the same setup as in Proposition~\ref{prop::joint_Ising_annulus}. Let $\varphi$ be a conformal map from some $\mathbb{A}_p$ onto $\mathcal{A}$ and write $\varphi^{-1}(x_j)=\exp(2\ii\theta^j)$ with $\theta^1<\cdots<\theta^{2N}<\theta^1+\pi$.  Fix $j\in \{1,2,\ldots,2N\}$. 
		 Then for each $r>0$, the curve $(\eta_j^{\delta}(t),0\leq t\leq T_r^{\delta}(j))$ converges weakly under the topology induced by~\eqref{eq::curve_metric_1} to the image under $\varphi$ of the Radial Loewner chain $\mathring{\eta}_j$ with the following driving function with the $\frac{2}{3}$-usual parameterization, up to the first time that the $r$-neighborhood of $\{x_1,\ldots,x_{j-1},x_{j+1},\ldots,x_{2N}\}\cup \intb$ is hit:
				\begin{equation} \label{eqn::loca_annulus_SDE}
			\begin{cases}
				\ud \xi_t^j=\ud B_t+(\partial_j \log\mathcal{H})(\Lambda_t^j; V_t^1,\ldots,V_t^{j-1},\xi_t^j,V_t^{j+1},\ldots,V_t^{2N})\ud t,\quad	\xi_0^j=\theta^j,\\
				\ud V_t^k=\frac{2}{3}\cot(V_t^k-\xi_t^j)\ud t,\quad  V_0^k=\theta^k,\enspace \text{for }k\in \{1,2,\ldots,2N\}\setminus \{j\},
			\end{cases}
		\end{equation}
	where $B$ is a standard Brownian motion, $\mathcal{H}$ is defined in~\eqref{eqn::def_H}-\eqref{eqn::H_general} and $\Lambda_t^j:=g_t^j\left(\mathbb{A}_p\setminus\mathring{\eta}_j\left([0,t]\right)\right)$, and where $g_t^j$ is the radial Loewner map associated with $\mathring{\eta}_j$.
	\end{lemma}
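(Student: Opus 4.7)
The plan is to follow the same strategy as in Lemma~\ref{lem::local_polygon_one_free} (and~\cite{IzyurovObservableFree, IzyurovIsingMultiplyConnectedDomains, CHI22}), namely: first establish precompactness of $\eta_j^\delta$ up to time $T_r^\delta(j)$, then identify any subsequential limit by constructing a discrete martingale observable and showing that it converges, in the scaling limit, to an explicit conformally covariant function written in terms of $\mathcal{H}$. First, the family of laws of $(\eta_j^\delta(t),\,0\le t\le T_r^\delta(j))$ is tight under the metric~\eqref{eq::curve_metric_1}: as in Lemma~\ref{lem::Ising_tightness}, this follows from the Kemppainen--Smirnov framework combined with standard Ising RSW estimates, together with the close-Carath\'eodory convergence of $(\mathcal{A}^\delta; x_1^\delta,\ldots,x_{2N}^\delta)\to (\mathcal{A}; x_1,\ldots,x_{2N})$ which provides uniform control near the marked boundary points. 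Consequently any subsequential limit is a radial Loewner chain parameterized by $\tfrac{2}{3}$-usual parameterization, driven by some continuous semimartingale $\xi_t^j$ stopped when the curve $\mathring\eta_j$ reaches the $r$-neighborhood of $\{x_k:k\neq j\}\cup\partial_{\mathrm{i}}$.

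Next I would identify the driving function. On $\mathcal{A}^{\delta,\bullet}$ with boundary conditions~\eqref{eqn::bounda_condi}, consider the discrete fermionic observable $F^\delta$ carrying the appropriate Riemann boundary values (dictated by the alternating $\oplus/\ominus$ arcs on $\partial_{\mathrm{o}}^\delta$ and the free arc on $\partial_{\mathrm{i}}^\delta$) and with a singularity at the tip of $\eta_j^\delta$. As in~\cite{IzyurovIsingMultiplyConnectedDomains, CHI22}, $F^\delta$ is discrete holomorphic away from the tip and its values, suitably normalized at the tip, provide a martingale with respect to the exploration of $\eta_j^\delta$. The continuous counterpart is a spinor on the annulus with Riemann boundary values, which under the uniformization $\varphi\colon\mathbb{A}_p\to\mathcal{A}$ is expressible via the Jacobian elliptic function $\mathrm{cs}$ with half-periods $2K=2p$ and $2K'=2\pi\ii$; this is precisely the annular analog of $1/(x_b-x_a)$ that appears in the polygon partition function~\eqref{eqn::Ising_totalpartitionfunction}. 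A Pfaffian combination of such two-point objects gives $\mathcal{H}$ as defined in~\eqref{eqn::def_H}--\eqref{eqn::H_general}, and the ratio used to build the martingale is exactly $\partial_j\log\mathcal{H}$ evaluated at the driving data.

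To extract~\eqref{eqn::loca_annulus_SDE}, apply It\^o's formula to the continuous limiting observable along the Loewner flow $g_t^j$ with $\frac{2}{3}$-usual parameterization: the geometric drift contains the $\frac{2}{3}\sum_{k\ne j}\cot(\xi_t^j-V_t^k)$ term coming from the motion of the marked points under $g_t^j$, while the requirement that the limiting observable remain a martingale forces a Girsanov drift equal to $(\partial_j\log\mathcal{H})(\Lambda_t^j;V_t^1,\ldots,\xi_t^j,\ldots,V_t^{2N})$, where $\Lambda_t^j=g_t^j(\mathbb{A}_p\setminus\mathring\eta_j([0,t]))$ encodes the slit annulus after uniformization. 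The stopping time $T_r^\delta(j)$ ensures that the observable is uniformly bounded away from its singular boundary locus, so the convergence of the martingale observable to its continuous counterpart (guaranteed by~\cite{CHI22} and~\cite{IzyurovIsingMultiplyConnectedDomains}) transfers to convergence of the drift of $\xi_t^j$ by a standard argument.

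The main obstacle is Step~2: ensuring convergence of the discrete observable to a precise conformally covariant spinor on the annulus with the mixed $\oplus/\ominus/\mathrm{free}$ boundary conditions, and checking that this scaling limit is the Pfaffian~\eqref{eqn::def_H} in terms of $\mathrm{cs}$. The algebraic identification (that the unique meromorphic function with the required monodromy on the cylinder $\mathbb{A}_p\cong\mathbb{C}/(2p\mathbb{Z}+2\pi\ii\mathbb{Z})$ is $\mathrm{cs}$) and the conformal covariance~\eqref{eqn::H_general} follow from the defining BPZ-type equations that $\mathcal{H}$ must satisfy; the harder analytic input is the discrete-to-continuous convergence of the observable on a doubly connected domain, which is provided by the results of~\cite{IzyurovIsingMultiplyConnectedDomains} and the simplification in~\cite{CHI22} acknowledged above. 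Once Step~2 is complete, Step~3 is a routine computation analogous to the proof of~\cite[Theorem~3.1]{IzyurovObservableFree}, with the only modification being the elliptic structure of $\mathcal{H}$.
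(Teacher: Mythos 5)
The paper's entire proof of this lemma is the single line ``This is a special case of~\cite[Corollary~1.5]{CHI22}'', so your martingale-observable outline --- tightness via RSW and the Kemppainen--Smirnov framework, convergence of the fermionic observable on the annulus to the elliptic spinor underlying $\mathcal{H}$, and identification of the drift by It\^o's formula --- is a faithful reconstruction of the strategy behind that citation rather than a different route. You also correctly isolate the discrete-to-continuous convergence of the observable on the doubly connected domain as the hard analytic step and attribute it to~\cite{IzyurovIsingMultiplyConnectedDomains} and~\cite{CHI22}, which is exactly where the paper outsources it.
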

\begin{proof}
This is a special case of~\cite[Corollary~1.5]{CHI22}.
\end{proof}

\subsubsection{From local convergence to global convergence}
The convergence of interfaces in Sections~\ref{sec::local_one_free}-\ref{sec::local_annulus} is local in the sense that we only consider initial segments of the interface $\eta_j^{\delta}$ restricted to a neighborhood of $x_j^{\delta}$. However, in order to prove Proposition~\ref{prop::joint_Ising_annulus}, we need the convergence of the whole interface $\eta_j^{\delta}$.  We follow the framework of~\cite[Section~5]{KarrilaMultipleSLELocalGlobal}.

\begin{lemma} \label{lem::polygon_localtoglobal}
	Assume the same setup as in Lemma~\ref{lem::local_polygon_one_free}. Then the curve $\eta_j^{\delta}$ converges weakly under the topology induced by~\eqref{eq::curve_metric_1} to the image under $\varphi$ of a continuous random curve $\mathring{\eta}_j$ starting from $x_j$. Moreover, $\mathring{\eta}_j$ is a radial Loewner chain, when using $\frac{2}{3}$-usual parameterization, it has the driving function satisfying the SDEs~\eqref{eqn::local_polygon_one_free_SDE}, up to the first time $\partial\unitD\setminus\{x_j\}$ is hit. 
\end{lemma}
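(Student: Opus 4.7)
The strategy is Karrila's localization-to-globalization framework from~\cite[Section~5]{KarrilaMultipleSLELocalGlobal}, upgrading the local statement of Lemma~\ref{lem::local_polygon_one_free} to convergence of the entire interface. The three ingredients are precompactness of $\{\eta_j^{\delta}\}_{\delta>0}$ in $(X,\metric)$, the local identification of the driving function from Lemma~\ref{lem::local_polygon_one_free}, and uniqueness of the SDE~\eqref{eqn::local_polygon_one_free_SDE}. Together they force every subsequential limit to have the same law, hence give convergence of the full sequence.

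First I would establish tightness of $\{\eta_j^{\delta}\}_{\delta>0}$ in $(X,\metric)$ via the Kemppainen--Smirnov criterion~\cite{KemppainenSmirnovRandomCurves}, whose hypotheses are verified by the standard RSW estimates for the critical FK-Ising / Ising model; this is the same argument as in Lemma~\ref{lem::Ising_tightness}. A consequence is that any subsequential limit $\mathring{\eta}_j$ (after pulling back by $\varphi^{-1}$) is almost surely a continuous curve in $\overline{\unitD}$ starting from $\exp(2\ii\theta^j)$ and admits a continuous radial Loewner driving function on $[0,\mathring{\tau}_j)$, where $\mathring{\tau}_j$ denotes the first time $\partial\unitD\setminus\{x_j\}$ is hit. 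Tightness of the driving functions in $C([0,T],\R)$ for each $T<\mathring{\tau}_j$ then follows from the general theory relating convergence of curves satisfying Kemppainen--Smirnov bounds to convergence of their driving functions.

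Second, take any weak subsequential limit $\eta_j^{\delta_n}\Rightarrow\mathring{\eta}_j$, and for each $r>0$ let $\mathring{T}_r(j)$ be the first time $\varphi^{-1}(\mathring{\eta}_j)$ enters the $r$-neighborhood of $\{x_1,\ldots,x_{n+2}\}\setminus\{x_j\}$. Combining Lemma~\ref{lem::local_polygon_one_free} with the convergence $T_r^{\delta_n}(j)\to\mathring{T}_r(j)$, which comes from close-Carath\'{e}odory convergence of the polygons together with tightness of the driving functions, one identifies $\varphi^{-1}(\mathring{\eta}_j|_{[0,\mathring{T}_r(j)]})$ as the $\tfrac{2}{3}$-radial Loewner chain driven by the unique strong solution to~\eqref{eqn::local_polygon_one_free_SDE} on $[0,\mathring{T}_r(j)]$. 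Strong existence and uniqueness hold because the drift coefficients $\cot(\xi^j-V^k)$ and $\partial_j\log\LR(\cdot)$ are real-analytic on the open chamber $\{\theta^1<\cdots<\theta^{n+2}<\theta^1+\pi\}$, where $\LR>0$. Sending $r\to 0+$, the stopping times $\mathring{T}_r(j)$ increase to $\mathring{\tau}_j$, and since the SDE description is consistent across $r$, the driving function of $\mathring{\eta}_j$ is uniquely characterized on $[0,\mathring{\tau}_j)$. Therefore every subsequential limit has the same law, and $\eta_j^{\delta}$ converges weakly.

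The main technical obstacle is justifying $\mathring{T}_r(j)\uparrow\mathring{\tau}_j$ almost surely: one must rule out the limit curve accumulating on $\partial\unitD\setminus\{x_j\}$ without actually hitting it, and one must ensure that close-Carath\'{e}odory convergence of $\Omega^{\delta}$ translates into genuine convergence of the hitting times $T_r^{\delta}(j)$ rather than merely one-sided bounds. Both points are precisely what the framework of~\cite[Theorem~4.2]{KarrilaConformalImage} together with the Kemppainen--Smirnov regularity of $\mathring{\eta}_j$ are designed to address; the close-Carath\'{e}odory hypothesis is indispensable here, since mere Carath\'{e}odory convergence permits boundary pathologies that would destroy the identification of $\mathring{T}_r(j)$ as the limit of $T_r^{\delta}(j)$.
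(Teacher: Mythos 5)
Your proposal is correct and follows essentially the same route as the paper: both reduce the lemma to Karrila's local-to-global framework~\cite[Section~5]{KarrilaMultipleSLELocalGlobal}, feeding in the local convergence of Lemma~\ref{lem::local_polygon_one_free} together with RSW estimates and the domain Markov property (the latter is worth stating explicitly, as it is one of the two hypotheses of that framework). The paper's proof is just this citation, so your more detailed sketch of tightness, subsequential-limit identification via the SDE, and the hitting-time issue is a faithful expansion of the same argument.
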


\begin{lemma}\label{lem::annulus_localtoglobal}
	Assume the same setup as in Lemma~\ref{lem::local_annulus}. Then the curve $\eta_j^{\delta}$ converges weakly under the topology induced by~\eqref{eq::curve_metric_1} to the image under $\varphi$ of a continuous curve $\mathring{\eta}_j$ starting from $x_j$. Moreover, $\mathring{\eta}_j$ is a radial Loewner chain, when using $\frac{2}{3}$-usual parameterization, it has the driving function satisfying the SDEs~\eqref{eqn::loca_annulus_SDE}, up to the first time $\partial\mathcal{A}\setminus \{x_j\}$ is hit.
\end{lemma}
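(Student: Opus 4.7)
The strategy is to upgrade the local convergence of Lemma~\ref{lem::local_annulus} to a global one using Karrila's local-to-global framework~\cite[Section~5]{KarrilaMultipleSLELocalGlobal}, adapted to the annular setting. First I would establish precompactness of the family $\{\eta_j^{\delta}\}_{\delta > 0}$ in the curve space $(X,\metric)$. This follows from the RSW estimates for the critical Ising model as in~\cite{KemppainenSmirnovRandomCurves}, which, combined with the close-Carath\'{e}odory convergence of the annular polygons, verify the Kemppainen-Smirnov conformal-crossing condition uniformly in $\delta$; see also Lemma~\ref{lem::Ising_tightness}. By Prokhorov's theorem, any $\delta_k\to 0$ admits a weakly convergent subsequence, and Skorokhod's representation allows me to assume the convergence is almost sure.

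Second, let $\tilde{\eta}_j$ be such a subsequential limit and set $\mathring{\eta}_j=\varphi^{-1}(\tilde{\eta}_j)$. The Kemppainen-Smirnov framework also guarantees that $\mathring{\eta}_j$ is almost surely a radial Loewner chain with a continuous driving function defined up to the first time it meets $\partial \A_p\setminus\{\exp(2\ii\theta^j)\}$; this uses the no-bottleneck and no-self-retracing estimates built into that framework. For every $r>0$, Lemma~\ref{lem::local_annulus} then identifies the law of $(\mathring{\eta}_j(t),\,0\le t\le T_r(j))$ under $\frac{2}{3}$-usual parameterization as the solution of the SDE~\eqref{eqn::loca_annulus_SDE}, where $T_r(j)$ is the first time $\mathring{\eta}_j$ comes within Euclidean distance $r$ of the other marked points or of $\intb$. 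Passing to the limit in $\delta$, the driving function $\xi^j$ of the subsequential limit satisfies~\eqref{eqn::loca_annulus_SDE} on $[0,T_r(j)]$ almost surely.

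Third, I would send $r\to 0^+$. The SDE~\eqref{eqn::loca_annulus_SDE} has smooth drift on the complement of the degenerate configurations where $\xi_t^j$ collides with one of the $V_t^k$ or where the tip approaches $\intb$, since $\LH$ is smooth and non-vanishing on its natural domain of definition. Hence it admits a unique strong solution up to the first such collision, and the associated radial Loewner curve coincides with $\mathring{\eta}_j$. Since the stopping times $T_r(j)$ are monotone in $r$ and increase to the first hitting time of $\partial\A_p\setminus\{\exp(2\ii\theta^j)\}$ as $r\to 0^+$, the identification~\eqref{eqn::loca_annulus_SDE} propagates to this full interval. Uniqueness in law then forces every subsequential limit to have the same distribution, and thus the whole family $\eta_j^{\delta}$ converges weakly to $\varphi(\mathring{\eta}_j)$.

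The main obstacle is the last step: one must check that the limiting stopping times $T_r(j)$ do exhaust the interval on which the driving function is well-defined, rather than stopping short because the discrete interface creates arbitrarily thin fjords near $\intb^{\delta}$ or near the other marked points that collapse in the limit. Ruling this out relies on the full strength of close-Carath\'{e}odory convergence at the marked boundary points (which prevents the approximations $x_j^{\delta}$ from pinching) together with quantitative RSW-type estimates excluding narrow bottlenecks along the interface. These are precisely the ingredients developed in~\cite[Theorem~4.4]{KarrilaMultipleSLELocalGlobal} and~\cite[Section~5]{KarrilaMultipleSLELocalGlobal}, and since RSW for the critical Ising model is insensitive to the topology of the domain, they transfer to the annular setting with only cosmetic modifications.
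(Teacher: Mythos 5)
Your proposal is correct and follows essentially the same route as the paper, which simply invokes Karrila's local-to-global framework (\cite[Theorem~5.8 \& Proposition~6.1]{KarrilaMultipleSLELocalGlobal}) with the two inputs you identify: the local convergence of Lemma~\ref{lem::local_annulus} and the RSW estimates together with the domain Markov property of the Ising model. Your write-up merely unpacks in more detail the steps that the cited framework carries out.
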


\begin{proof}[Proof of Lemmas~\ref{lem::polygon_localtoglobal} and~\ref{lem::annulus_localtoglobal}]
The proof is essentially the same as the proof of~\cite[Theorem~5.8 \& Proposition~6.1]{KarrilaMultipleSLELocalGlobal}, with the following inputs:
\begin{itemize}
	\item Local convergence of $\eta_j^{\delta}$ under boundary conditions~\eqref{eqn::bounda_condi_one_free} and~\eqref{eqn::bounda_condi} in Lemmas~\ref{lem::local_polygon_one_free} and~\ref{lem::local_annulus}, respectively, which corresponds to~\cite[Assumption~5.1]{KarrilaMultipleSLELocalGlobal};
	\item RSW estimates and domain Markov property for Ising model, which correspond to~\cite[Assumption~5.6]{KarrilaMultipleSLELocalGlobal}.
\end{itemize}
\end{proof}
%

%
%
\subsubsection{Proof of Proposition~\ref{prop::joint_Ising_annulus}}
With tightness in Lemma~\ref{lem::Ising_tightness} and  convergence of a single interface in Lemmas~\ref{lem::polygon_localtoglobal} and~\ref{lem::annulus_localtoglobal}, we are ready to prove Proposition~\ref{prop::joint_Ising_annulus}.

\begin{proof}[Proof of Proposition~\ref{prop::joint_Ising_annulus}]
	By Lemma~\ref{lem::Ising_tightness} (also by coupling them into the same probability space), we may choose a subsequence $\delta_n\to 0$ such that $(\eta_1^{\delta_n},\ldots,\eta_{2N}^{\delta_n})$ converges to $(\eta_1,\ldots,\eta_{2N})$ in metric~\eqref{eq::curve_metric_2} as $n\to \infty$, almost surely. We denote by $\tau$ the hitting time of $\partial\mathcal{A}\setminus \{x_1\}$ by $\eta_1$, and denote by $\tau_n$ the hitting time of $\partial\mathcal{A}^{\delta_n}$ by $\eta_1^{\delta_n}$. 
	 There are two cases considering the location of $\eta_1(\tau)$ (resp., $\eta_1^{\delta_n}(\tau_n)$): 
	 \begin{itemize}
	 	\item $\eta_1(\tau)=x_k$ (resp., $\eta_1^{\delta_n}(\tau_n)=x_k^{\delta_n}$) for some even $k$ and we denote by $\mathcal{A}_L$ and $\mathcal{A}_R$ (resp., $\mathcal{A}_L^{\delta_n}$ and $\mathcal{A}_R^{\delta_n}$) the connected components of $\mathcal{A}\setminus \eta_1$ (resp., $\mathcal{A}^{\delta_n}\setminus \eta_1^{\delta_n}$) that are on the left and right hand sides of $\eta_1$ (resp., $\eta_1^{\delta_n}$), respectively;
	 	\item  $\eta_1(\tau)\in \intb$ (resp., $\eta_1^{\delta_n}(\tau_n)\in \intb^{\delta_n}$) and we denote by $x_L^{*}$ and $x_R^{*}$ (resp., $x_L^{(*,\delta_n)}$ and $x_R^{(*,\delta_n)}$) the primal ends of $\partial\left(\mathcal{A}\setminus \eta_1\right)$ (resp., $\partial\left(\mathcal{A}^{\delta_n}\setminus \eta_1^{\delta_n}\right)$) correspond to $\eta_1(\tau)$ (resp., $\eta_1^{\delta_n}(\tau_n)$) lying on the left and right hand sides of $\eta_1$ (resp., $\eta_1^{\delta_n}$), respectively.
	 \end{itemize}
	 Thanks to Lemmas~\ref{lem::local_annulus} and~\ref{lem::annulus_localtoglobal}, the law of $\eta_1$ is independent of the subsequence $\{\delta_n\}_{n\geq 1}$. It remains to show that 
	 \begin{equation}\label{eqn::joint_annulus_aux1}
	 \text{given $\eta_1$, the conditional law of $(\eta_2,\ldots,\eta_{2N})$ is also independent of the subsequence $\{\delta_n\}_{n\geq 1}$. }
	 \end{equation} 
	We proceed by induction on $N\geq 1$.
	
	First, we consider the base case $N=1$. On the event $\{\eta_1(\tau)=x_2\}$,  the curve $\eta_2$ is the time-reversal of $\eta_1$, so that the conditional law of $\eta_2$ given $\eta_1$ is independent of $\{\delta_n\}_{n\geq 1}$, as desired. 
		On the event $\{\eta_1(\tau)\in \intb\}$, without loss of generality, we may assume that $\eta_1^{\delta_n}(\tau_n)\in \intb^{\delta_n}$ for all $n\geq 1$. 
	Then one can proceed as in~\cite[Proof of Lemma~5.6]{Izy22} to show that the discrete polygons $(\mathcal{A}^{\delta_n}\setminus \eta_1^{\delta_n};x_2^{\delta_n},x_L^{(*,\delta_n)},x_R^{(*,\delta_n)})$ converge almost surely to $(\mathcal{A}\setminus\eta_1;x_2,x_{L}^*,x_R^*)$ in the  close-Carath\'{e}odory sense. Thus, almost surely, there exist conformal maps $\varphi_n$ (resp., $\varphi$) from $\mathbb{D}$ onto $\mathcal{A}^{\delta_n}\setminus \eta_1^{\delta_n}$ (resp., $\mathcal{A}\setminus \eta_1$) such that, as $n\to \infty$, the maps $\varphi_n$ converge uniformly on compact subsets of $\mathbb{D}$ to $\varphi$ and we have $(\varphi_n)^{-1}(x_2^{\delta_n})\to (\varphi^{-1})(x_2)$, $(\varphi_n)^{-1}(x_L^{(*,\delta_n)})\to (\varphi^{-1})(x_L^*)$ and $(\varphi_n)^{-1}(x_R^{(*,\delta_n)})\to (\varphi^{-1})(x_R^*)$. 
	\begin{itemize}
		\item On the one hand, thanks to the domain Markov property of Ising model, Lemmas~\ref{lem::local_polygon_one_free} and~\ref{lem::polygon_localtoglobal}, the law of $(\varphi_n)^{-1}({\eta_2^{\delta_n}})$ converges weakly under the topology induced by~\eqref{eq::curve_metric_2} to a limit that is independent of the subsequence $\{\delta_n\}_{n\geq 1}$.
		\item On the other hand, thanks to~\cite[Theorem~4.2]{KarrilaConformalImage} and the hypothesis that $(\eta_1^{\delta_n},\eta_2^{\delta_n})$ converges almost surely to $(\eta_1,\eta_2)$ in the metric~\eqref{eq::curve_metric_2}, one can proceed as in~\cite[Proof of Lemma~4.7]{BeffaraPeltolaWuUniqueness} to show that $(\varphi_n)^{-1}(\eta_2^{\delta_n})$ converges weakly to $\varphi^{-1}(\eta_2)$ under the topology induced by~\eqref{eq::curve_metric_2}.
	\end{itemize}
	Thus, on the event $\{\eta_1(\tau)\in \intb\}$, the conditional law of $\eta_2$ given $\eta_1$ is independent of the subsequence $\{\delta_n\}_{n\geq 1}$. To sum up,~\eqref{eqn::joint_annulus_aux1} holds for the base case $N=1$.
	
	Second, let us consider the case $N\geq 2$. We make the induction hypothesis that~\eqref{eqn::joint_annulus_aux1} holds for all $N'\leq N-1$. We now consider the case $N$. 
\begin{itemize}
\item On the event $\{\eta_1(\tau)=x_k\}$, without loss of generality, we may assume that $\eta_1^{\delta_n}$ hits $x_k^{\delta_n}$ for all $n\geq 1$. 
One can proceed as in~\cite[Proof of Lemma~5.6]{Izy22} to show that $(\mathcal{A}_L^{\delta_n};x_{k+1}^{\delta_n},\ldots,x_{2N}^{\delta_n})$ (resp., $(\mathcal{A}_R^{\delta_n};x_{2}^{\delta_n},\ldots,x_{k-1}^{\delta_n})$) converges almost surely to $(\mathcal{A}_L;x_{k+1},\ldots,x_{2N})$ (resp., $(\mathcal{A}_R;x_2,\ldots,x_{k-1})$) in the close-Carath\'{e}odory sense. Then, thanks to the domain Markov property of Ising model, the induction hypothesis,  and Proposition~\ref{prop::Ising_polygon}, one can proceed as in the case of $N=1$ to show that  given $\eta_1$, on the event $\{\eta_1(\tau)=x_k\}$, the conditional law of $(\eta_2,\ldots,\eta_{k-1},\eta_{k+1},\ldots,\eta_{2N})$ is the independent product of $\mathbb{P}_{\Ising}^{(\mathcal{A}_L;x_{k+1},\ldots,x_{2N})}$ and $\mathbb{P}_{\Ising}^{(\mathcal{A}_R;x_2,\ldots,x_{k-1})}$ and in particular, is independent of the subsequence $\{\delta_n\}_{n\geq 1}$.
\item On the event $\{\eta_1(\tau)\in \intb\}$, one can proceed as in~\cite[Proof of Lemma~5.6]{Izy22} to show that the discrete polygons $(\mathcal{A}^{\delta_n}\setminus \eta_1^{\delta_n};x_2^{\delta_n}, \ldots, x_{2N}^{\delta_n}, x_L^{(*,\delta_n)},x_R^{(*,\delta_n)})$ converge almost surely to $(\mathcal{A}\setminus\eta_1;x_2,\ldots, x_{2N}, x_{L}^*,x_R^*)$ in the  close-Carath\'{e}odory sense. Then, thanks to the domain Markov property of Ising model and Lemmas~\ref{lem::local_polygon_one_free} and~\ref{lem::polygon_localtoglobal}, one can proceed as in the case of $N=1$ to show that given $\eta_1$, on the event $\{\eta_1(\tau)\in\intb\}$, the conditional law of $(\eta_2, \ldots, \eta_{2N})$ is also independent of $\{\delta_n\}_{n\ge 1}$.  
\end{itemize}	
To sum up,~\eqref{eqn::joint_annulus_aux1} also holds for the case $N$, as we set out to prove. 
\end{proof}

\subsection{Proof of Theorem~\ref{thm::Ising_annulus_radialSLE}} 
\label{sec::cvg_Ising_radial}
	In this section, we combine Propositions~\ref{prop::Ising_polygon} and~\ref{prop::joint_Ising_annulus} with tools from Section~\ref{sec::multipleSLE} to show Theorem~\ref{thm::Ising_annulus_radialSLE}. We fix parameters
	\begin{equation*}
		\kappa=3,\quad a=\frac{2}{3},\quad b=\frac{1}{2}.
	\end{equation*}
	
Fix $\theta^1<\cdots<\theta^{2N}<\theta^1+\pi$ and write $\bs{\theta}=(\theta^1, \ldots, \theta^{2N})$. Denote $x_j=\exp(2\ii \theta^j)$ for $1\leq j\leq 2N$ and write $\boldsymbol{x}=(x_1,\ldots,x_{2N})$.
\begin{itemize}
	\item Recall that $\PP_{\Ising}^{(\mathbb{D};\boldsymbol{x})}$ in Proposition~\ref{prop::Ising_polygon} denotes the limiting distribution of Ising interfaces in polygon $(\unitD; x_1, \ldots, x_{2N})$. 
	\item Recall that $\PP_{\Ising}^{(\A_p;\boldsymbol{x})}$ in Proposition~\ref{prop::joint_Ising_annulus} denotes the limiting distribution of Ising interfaces in annular polygon $(\A_p; x_1, \ldots, x_{2N})$. 
\end{itemize}
We denote 
\[E_p=\{\eta_1, \ldots, \eta_{2N}\text{ all hit }e^{-p}\unitD\}. \]

\begin{lemma} \label{lem::compare_arm_events}
There exist two constants $c_1,c_2>0$ that only depend on $N$ so that 
\begin{equation} \label{eqn::IsingAsy_aux2}
	c_1\leq \frac{\mathbb{P}_{\Ising}^{(\A_p;\bs	{x})}[E_p]}{\mathbb{P}_{\Ising}^{(\unitD;\bs{x})}[E_p]}\leq c_2. 
\end{equation}
\end{lemma}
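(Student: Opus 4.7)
My plan is to prove the ratio bound by working at the discrete approximation and transferring the bound to the continuum via Propositions~\ref{prop::Ising_polygon} and~\ref{prop::joint_Ising_annulus}. For $\delta>0$, let $\mu_1^\delta$ denote the critical Ising measure on $\A_p^{\delta,\bullet}$ with boundary conditions~\eqref{eqn::bounda_condi}, and let $\mu_2^\delta$ denote the critical Ising measure on $\unitD^{\delta,\bullet}$ with the alternating $\pm$ condition on the outer boundary. Let $E_p^\delta$ be the natural discrete analogue of $E_p$. The key observation is that $\mu_1^\delta[E_p^\delta]$ and $\mu_2^\delta[E_p^\delta]$ depend only on the spin configuration restricted to $\A_p^{\delta,\bullet}$, since $E_p^\delta$ is equivalent to the existence of $2N$ disjoint alternating-color monochromatic arms in $\A_p^{\delta,\bullet}$ running from neighborhoods of the marked points $x_j^\delta$ to the inner boundary $\intb^\delta$, i.e., a polychromatic $2N$-arm event in this annulus.

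The next step is to compute, by a direct Boltzmann-weight calculation, the Radon--Nikodym derivative between the two discrete Ising measures restricted to $\A_p^{\delta,\bullet}$; it depends only on $\sigma|_{\intb^\delta}$ and is proportional to $\exp(\beta H_{\intb}(\sigma|_{\intb^\delta}))\cdot Z_{e^{-p}\unitD^{\delta,\bullet}}^{\sigma|_{\intb^\delta}}$, where $H_{\intb}$ is the edge-sum along $\intb^\delta$ and the partition function $Z_{e^{-p}\unitD^{\delta,\bullet}}^{\tau}$ encodes the effect of marginalizing out the spins inside the inner disk. Consequently, the ratio $\mu_2^\delta[E_p^\delta]/\mu_1^\delta[E_p^\delta]$ equals the ratio of the conditional expectation (given $E_p^\delta$) of this derivative and its unconditional expectation under $\mu_1^\delta$. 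To bound this ratio above and below by constants depending only on $N$ (uniformly in $\bs{x}$ and $p$), I plan to invoke the following ingredients, which jointly imply that the polychromatic $2N$-arm probability in $\A_p^{\delta,\bullet}$ is insensitive up to constants depending only on $N$ to the choice of inner boundary condition: (i)~the Edwards--Sokal coupling and FKG monotonicity on the associated FK--Ising configuration, yielding one-sided comparisons between free, ``integrated'', and wired inner boundary conditions; and (ii)~RSW estimates for critical Ising (\`a la Chelkak--Duminil-Copin--Hongler) together with Kesten-type quasi-multiplicativity of arm events, providing the matching reverse comparison. Since the two measures share identical outer boundary data, any $\bs{x}$-dependence of the arm probability factors out in the ratio.

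Letting $\delta\to 0$ and invoking Propositions~\ref{prop::Ising_polygon} and~\ref{prop::joint_Ising_annulus}, the discrete ratio bound transfers to the continuum and yields~\eqref{eqn::IsingAsy_aux2}. The main obstacle is establishing the inner-boundary-insensitivity of the polychromatic $2N$-arm event uniformly in $\bs{x}$, $p$, and $\delta$: while FKG monotonicity handles one direction cleanly, the reverse bound requires a careful RSW/quasi-multiplicativity argument to decouple the arm event from the inner boundary at scale $e^{-p}$ and to ensure that the multiplicative constants are independent of the geometry of the marked points.
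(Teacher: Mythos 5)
Your overall architecture coincides with the paper's: pass to discrete approximations, identify $E_p^{\delta}$ with a $2N$-alternating-arm event in the annulus, show that this arm probability is insensitive (up to constants depending only on $N$) to whether the inner hole carries free boundary conditions or is filled in, and transfer back to the continuum via Propositions~\ref{prop::Ising_polygon} and~\ref{prop::joint_Ising_annulus}. The paper implements the insensitivity in two steps: (a) the ratio-mixing estimate of Duminil-Copin--Hongler--Nolin (Lemma~\ref{lem::spatial}), applied to the event that all interfaces reach the slightly larger circle $\Gamma_{Ce^{-p}}^{\delta}$, which depends only on spins outside radius $Ce^{-p}$ while the two measures differ only at radius $e^{-p}$; and (b) a one-step quasi-multiplicativity (Lemmas~\ref{lem::RSW} and~\ref{lem::Ising_well_separa} together with FKG) showing that extending the $2N$ arms from $\Gamma_{Ce^{-p}}^{\delta}$ down to $\Gamma_{e^{-p}}^{\delta}$ costs only a constant factor.

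The genuine gap in your plan is your step~(i). The event $E_p^{\delta}$ is a polychromatic arm event---it requires $2N$ disjoint arms with alternating $\oplus/\ominus$ spins---and is therefore not monotone in the spin configuration, nor does it become a monotone event for the FK configuration under the Edwards--Sokal coupling. Hence FKG monotonicity between free, ``integrated'', and wired inner boundary conditions does not yield a comparison of the probabilities of $E_p^{\delta}$ in either direction; it only compares increasing events. What replaces it is precisely a ratio-mixing estimate valid for arbitrary events, which is the content of Lemma~\ref{lem::spatial}; its proof does use monotone couplings, but applied to couple the two measures across an intermediate annulus rather than to the target event. If you substitute such a mixing estimate for the FKG step (and keep your RSW/quasi-multiplicativity step, now only needed to bridge the remaining scale gap between $Ce^{-p}$ and $e^{-p}$), your argument becomes the paper's proof. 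Your Radon--Nikodym computation, while correct, does not by itself advance matters: the derivative is a nontrivial function of the inner-boundary spins, and controlling its conditional expectation given $E_p^{\delta}$ is exactly the decoupling problem you still need to solve.
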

We postpone the proof of Lemma~\ref{lem::compare_arm_events} to the next section. Assuming Lemma~\ref{lem::compare_arm_events}, we are ready to prove Theorem~\ref{thm::Ising_annulus_radialSLE}. For $\delta>0$ and $r\in (0,1]$, let $\Gamma_r^{\delta} $  be a closed simple loop consisting of edges in $\delta\mathbb{Z}^2$ so that $\Gamma_r^{\delta}$ tends to $r\partial\unitD$ as $\delta$ tends to $0$ in the Hausdorff metric.  Let $\Lambda_r^{\delta}$ be the discrete annular domain with outer boundary $\Gamma_1^{\delta}$ and inner boundary  $\Gamma_r^{\delta}$. From now on, we fix a choice of $\Gamma_{r}^{\delta}$ and hence $\Lambda_{r}^{\delta}$ for $\delta>0$ and $r\in (0,1)$.

\begin{proof}[Proof of~\eqref{eqn::IsingAsy}]
		According to Theorem~\ref{thm::NChordalSLEGreenFunction} and~\eqref{eqn::def_ptotal}, there exists a constant $\tilde{C}\in (1,\infty)$ that may depend on $\boldsymbol{x}$ so that 
	\begin{equation} \label{eqn::IsingAsy_aux1}
		\tilde{C}^{-1}\exp\left(-\frac{16N^2-1}{24}p\right)\le \PP_{\Ising}^{(\unitD;\bs{x})}\left[\eta_1, \ldots, \eta_{2N}\text{ all hit } e^{-p}\unitD\right]\le \tilde{C}\exp\left(-\frac{16N^2-1}{24}p\right). 
	\end{equation}
Then~\eqref{eqn::IsingAsy} follows readily from Lemma~\ref{lem::compare_arm_events} and~\eqref{eqn::IsingAsy_aux1}.
\end{proof}

\begin{proof}[Proof of~\eqref{eqn::IsingAnnulusCvg}]
		We will compare the following three measures: 
	\begin{itemize}
		\item Recall that $\PP_{\mathrm{Ising}\cond p}^{(\A_p;\bs{x})}$ denotes the law of the limit $(\eta_1, \ldots, \eta_{2N})$ in Proposition~\ref{prop::joint_Ising_annulus} conditional on $E_p$. 
		\item Recall that $\PP_{\kappa=3\cond *}^{(\bs{\theta})}$ denotes the law of $2N$-sided radial $\SLE_{3}$ in polygon $(\unitD; \exp(2\ii\theta^1), \ldots, \exp(2\ii\theta^{2N}))$. 
		\item Denote by $\PP_{\Ising\cond p}^{(\unitD;\bs{x})}$ the law of $(\eta_1,\ldots,\eta_{2N})$ under $\mathbb{P}^{(\unitD;\bs{x})}_{\Ising}$ in Proposition~\ref{prop::Ising_polygon} conditional on $E_p$.  
	\end{itemize}

	We parameterize $\bs{\eta}=(\eta_1, \ldots, \eta_{2N})$ by 
$\frac{2}{3}$-common parameterization and denote by $(\LF_t, t\ge 0)$ the filtration generated by $(\boldsymbol{\eta}(t), t\ge 0)$. Fix any $s>0$, when all three measures are restricted to $\LF_s$, the total variation distances go to zero as $p\to\infty$: 
\begin{equation}\label{eqn::TV_Total_Radial}
	\lim_{p\to \infty}\dist_{TV}\left(\PP_{\Ising\cond p}^{(\unitD;\bs{x})}[\cdot \cond_{\LF_s}],\PP_{\kappa=3\cond *}^{(\bs{\theta})}[\cdot\cond_{\LF_s}]\right)=0;
\end{equation}
\begin{equation}\label{eqn::TV_Ising_Total}
	\lim_{p\to \infty}\dist_{TV}\left(\PP_{\mathrm{Ising}\cond p}^{(\A_p;\bs{x})}[\cdot \cond_{\LF_s}],\PP_{\Ising\cond p}^{(\unitD;\bs{x})}[\cdot \cond_{\LF_s}]\right)=0.
\end{equation}
The conclusion follows from these two convergences. The convergence in~\eqref{eqn::TV_Total_Radial} follows from Corollary~\ref{cor::chordaltoradial} and~\eqref{eqn::def_ptotal}. To show the convergence in~\eqref{eqn::TV_Ising_Total}, it suffices to show that 
\begin{equation} \label{eqn::Ising_annulus_radialSLE_aux21}
	\lim_{p\to \infty}	\sup_{E\in \mathcal{F}_s}\left| \frac{\PP_{\mathrm{Ising}}^{(\A_p;\bs{x})}[E\cap E_p]}{\PP_{\mathrm{Ising}}^{(\A_p;\bs{x})}[E_p]}-\frac{\PP_{\Ising}^{(\unitD;\bs{x})}[E\cap E_p]}{\PP_{\Ising}^{(\unitD;\bs{x})}[E_p]}\right|=0.
\end{equation}

Now let us prove~\eqref{eqn::Ising_annulus_radialSLE_aux21}.   
\begin{itemize}
\item On the one hand, let $(\mathcal{A}^{\delta};x_{1}^{\delta},\ldots,x_{2N}^{\delta})$ be a sequence of discrete annular polygons so that $(x_{j}^{\delta}x_{j+1}^{\delta})$ converges to $(x_j x_{j+1})$ for each $j$ under the Hausdorff distance and that $\mathcal{A}^{\delta}=\Lambda_{e^{-p}}^{\delta}$. Let $\mathcal{A}^{\delta,\bullet}$ be the dual graph of $\mathcal{A}^{\delta}$ and define $x_{j}^{\delta,\bullet}$ to be a vertex in $(\delta\mathbb{Z}^2)^{\bullet}$ that is nearest to $x_j^{\delta}$ for $1\leq j\leq 2N$.
Denote by $\PP^{\delta}_{\LA}$ the Ising measure on $\mathcal{A}^{\delta,\bullet}$ with boundary conditions~\eqref{eqn::bounda_condi}. Then, according to Proposition~\ref{prop::joint_Ising_annulus},  the law of the collection of Ising interfaces $(\eta_1^{\delta},\ldots,\eta_{2N}^{\delta})$ under $\PP^{\delta}_{\LA}$ converges weakly to $\PP_{\mathrm{Ising}}^{(\A_p;\bs{x})}$ as $\delta\to0$ under the topology induced by~\eqref{eq::curve_metric_2}. 
\item On the other hand, denote by $\Omega^{\delta}$ the simply connected subgraph of $\delta\mathbb{Z}^2$ induced by the vertices lying on or enclosed by the outer boundary of $\mathcal{A}^{\delta}$. Let $\Omega^{\delta,\bullet}$ be the dual graph of $\Omega^{\delta}$ and define $x_{j}^{\delta,\bullet}$ to be a vertex in $(\delta\mathbb{Z}^2)^{\bullet}$ that is nearest to $x_j^{\delta}$ for $1\leq j\leq 2N$. Denote by ${\mathbb{P}}_{\Omega}^{\delta}$ the Ising measure on $\Omega^{\delta,\bullet}$ with boundary conditions~\eqref{eqn::bound_condi_polygon}.
According to Proposition~\ref{prop::Ising_polygon}, the law of the collection of Ising interfaces $(\eta_1^{\delta},\ldots,\eta_{2N}^{\delta})$ under ${\mathbb{P}}_{\Omega}^{\delta}$ converges weakly to $\PP_{\Ising}^{(\unitD;\bs{x})}$ as $\delta\to0$ under the topology induced by~\eqref{eq::curve_metric_2}. 
\end{itemize}

We denote
\[E_p^{\delta}=\{\eta_1^{\delta},\ldots,\eta_{2N}^{\delta}\text{ all hit }\Gamma_{e^{-p}}^{\delta}\}.\]
Thanks to Lemma~\ref{lem::Ising_coupling} below (proven in Section~\ref{sec::technical}), in the discrete, there exist constants $c_3, c_4>0$ that depend only on $\boldsymbol{x}$ so that for any $r\in (e^{-p},1)$,  small enough $\delta>0$ and  any event $E^{\delta}$ depends on status inside $\Lambda_{r}^{\delta}$, we have 
\begin{equation}\label{eqn::Ising_annulus_radialSLE_aux22}
	\left|\PP^{\delta}_{\LA}[E^{\delta}|E_p^{\delta}]-\mathbb{P}_{\Omega}^{\delta}[E^{\delta}|E_p^{\delta}]\right|\leq c_3(\frac{e^{-p}}{r})^{c_4}.
\end{equation}
According to  Koebe's $\frac{1}{4}$ theorem and the definition of $\frac{2}{3}$-common parameterization, we have 
\begin{equation}\label{eqn::Ising_annulus_radialSLE_aux23}
\dist\left(0,\eta_j[0,s]\right)\geq \frac{1}{4}e^{-\frac{4Ns}{3}},\quad \forall 1\leq j\leq 2N.
\end{equation}
Combining convergence of discrete interfaces with~\eqref{eqn::Ising_annulus_radialSLE_aux22}-\eqref{eqn::Ising_annulus_radialSLE_aux23}, we have that for $E\in\LF_s$,
\[\left| \frac{\PP_{\mathrm{Ising}}^{(\A_p;\bs{x})}[E\cap E_p]}{\PP_{\mathrm{Ising}}^{(\A_p;\bs{x})}[E_p]}-\frac{\PP_{\Ising}^{(\unitD;\bs{x})}[E\cap E_p]}{\PP_{\Ising}^{(\unitD;\bs{x})}[E_p]}\right|\leq c_3(4e^{-p+\frac{4Ns}{3}})^{c_4}.\]
This proves~\eqref{eqn::Ising_annulus_radialSLE_aux21} and completes the proof of~\eqref{eqn::IsingAnnulusCvg}.
\end{proof}

\begin{lemma}\label{lem::Ising_coupling}
		Let $\PP^{\delta}_{\LA}$, $\PP^{\delta}_{\Omega}$ and $E_p^{\delta}$ be as in the proof of~\eqref{eqn::IsingAnnulusCvg}. Then there exist two constants $c_3,c_4>0$ that depend only on $\boldsymbol{x}$ so that for any $r\in (e^{-p},1)$ and any  small enough $\delta>0$, there exist a coupling $\hat{\mathbb{P}}^{\delta}$ of two conditional measures $\PP^{\delta}_{\LA}[\cdot | E_p], \mathbb{P}_{\Omega}^{\delta}[\cdot |E_p]$ and an event $\mathcal{O}_r^{\delta}$ so that
		\begin{equation*}
			\hat{\mathbb{P}}^{\delta}[\mathcal{O}_r^{\delta}]\geq 1-c_3(\frac{e^{-p}}{r})^{c_4},
		\end{equation*}  
		and that if $\mathcal{O}_r^{\delta}$ happens, then status of vertices in $\Lambda_{r}^{\delta}$ are the same under both measures $\PP^{\delta}_{\LA}[\cdot |E_p]$ and $\mathbb{P}_{\Omega}^{\delta}[\cdot| E_p]$. In particular, for any event $E^{\delta}$ depending on the status of vertices inside $\Lambda_{r}^{\delta}$, we have~\eqref{eqn::Ising_annulus_radialSLE_aux22}.
	\end{lemma}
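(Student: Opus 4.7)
The two Ising measures $\PP^\delta_\LA$ and $\PP^\delta_\Omega$ share identical outer boundary conditions and differ only at the inner boundary: free on $\Gamma_{e^{-p}}^\delta$ versus no inner hole at all. Conditioning on the rare event $E_p^\delta$ should make this inner difference propagate only weakly out to scale $r$. The plan is to first couple the marginal laws of the spin configuration on $\Lambda_r^\delta$, and then extend to a coupling of the full configurations via the spatial Markov property of the Ising model.

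For $\bullet \in \{\LA, \Omega\}$, write $\nu_\bullet$ for the marginal law of $\sigma|_{\Lambda_r^\delta}$ under $\PP^\delta_\bullet[\,\cdot \mid E_p^\delta\,]$. By the spatial Markov property, the Radon--Nikodym derivative factors as
\[
\frac{d\nu_\LA}{d\nu_\Omega}(\sigma)
\;=\;
C_{p,r}\cdot \frac{Z^\LA_{\mathrm{in}}(\sigma)}{Z^\Omega_{\mathrm{in}}(\sigma)}\cdot \frac{q_\LA(\sigma)}{q_\Omega(\sigma)},
\]
where $Z^\bullet_{\mathrm{in}}(\sigma)$ is the Ising partition function in the inner disk $\{|z|<r\}$ with outer boundary data $\sigma|_{\Gamma_r^\delta}$ and inner boundary either free at $\Gamma_{e^{-p}}^\delta$ or absent, $q_\bullet(\sigma)$ is the probability of $E_p^\delta$ in the corresponding inner model, and $C_{p,r}$ is a constant absorbing $\PP^\delta_\bullet[E_p^\delta]$ and the overall partition functions. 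The goal is to show that this derivative equals $1+O((e^{-p}/r)^{c_4})$ uniformly on a set of $\nu_\Omega$-probability at least $1-O((e^{-p}/r)^{c_4})$.

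The partition function ratio $Z^\LA_{\mathrm{in}}/Z^\Omega_{\mathrm{in}}$ is controlled by standard domain-perturbation estimates combined with RSW, since the two inner problems agree outside a disk of radius $e^{-p}$. The more delicate factor is $q_\LA/q_\Omega$, which amounts to a quasi-multiplicativity statement for the $2N$-arm Ising event with alternating $\oplus/\ominus$ boundary conditions on $\Gamma_r^\delta$. The main obstacle is the associated separation lemma: with $\nu_\Omega$-probability $1-O((e^{-p}/r)^{c_4})$, the $2N$ interface arms land on $\Gamma_r^\delta$ at well-separated positions, so that the continuation from $\Gamma_r^\delta$ to $\Gamma_{e^{-p}}^\delta$ has the same probability under either inner boundary condition up to multiplicative error $1+O((e^{-p}/r)^{c_4})$. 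This is the Ising analog of the Garban--Pete--Schramm separation lemmas for percolation and is closely related to the alternating $2N$-arm estimates of~\cite{WuAlternatingArmIsing}. Once this uniform ratio estimate is in hand, the maximal coupling of $\nu_\LA$ and $\nu_\Omega$ succeeds with probability at least $1-c_3(e^{-p}/r)^{c_4}$ (defining $\mathcal{O}_r^\delta$), and on this event we extend to a coupling of the full Ising configurations by independently sampling the inner spin configurations from their respective conditional laws, which play no role in the bound~\eqref{eqn::Ising_annulus_radialSLE_aux22}.
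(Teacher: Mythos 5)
Your reduction is correct in its bookkeeping: the spatial Markov property does give the factorization of $d\nu_{\LA}/d\nu_{\Omega}$ into a normalizing constant, an inner partition-function ratio, and the ratio $q_{\LA}(\sigma)/q_{\Omega}(\sigma)$ of conditional arm-continuation probabilities, and the unconditioned factor $Z^{\LA}_{\mathrm{in}}/Z^{\Omega}_{\mathrm{in}}$ is indeed handled by spatial mixing (Lemma~\ref{lem::spatial}). The gap is in the treatment of $q_{\LA}/q_{\Omega}$. First, as literally stated your claim is false: the continuation probabilities under the two inner boundary conditions (free hole at $\Gamma_{e^{-p}}^{\delta}$ versus filled disc) do \emph{not} agree up to $1+O((e^{-p}/r)^{c_4})$; they differ by a multiplicative constant bounded away from $1$, which is exactly why Lemma~\ref{lem::compare_arm_events} only asserts two-sided constant bounds $c_1\le\cdot\le c_2$ rather than a ratio tending to $1$. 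What your argument actually requires is that $q_{\LA}(\sigma)/q_{\Omega}(\sigma)=\rho_{p,r}\,(1+O((e^{-p}/r)^{c_4}))$ for a constant $\rho_{p,r}$ independent of $\sigma$, uniformly over a high-probability set of well-separated boundary data on $\Gamma_r^{\delta}$ (the constant is then absorbed into $C_{p,r}$). This refined, polynomial-rate insensitivity of the ratio to the data at $\Gamma_r^{\delta}$ is \emph{not} a consequence of a separation lemma: separation plus RSW plus quasi-multiplicativity only yield comparability up to universal constants, not a $1+O((e^{-p}/r)^{c_4})$ error. The polynomial error rate is precisely the content of the lemma you are trying to prove, so at its key step the proposal is circular.

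The paper's proof sidesteps this entirely: it invokes the argument of Garban--Pete--Schramm (\cite[Proposition~3.1]{GarbanPeteSchrammPivotalClusterInterfacePercolation}), with percolation RSW replaced by the strong Ising RSW estimate of Lemma~\ref{lem::RSW}. That argument constructs the coupling directly, scale by scale: moving through the dyadic scales between $e^{-p}$ and $r$, separation of arms and RSW give a uniformly positive probability at each scale of merging the two conditional laws on a separating circuit together with well-separated arm landings, after which the configurations further out are identically distributed; iterating over the $\asymp\log(re^{p})$ scales yields the bound $1-c_3(e^{-p}/r)^{c_4}$. The refined ratio estimate you need is then a corollary of this coupling, not an input to it. To repair your proposal you would either have to import such a coupling result (at which point the maximal-coupling detour is unnecessary) or prove the ratio-mixing estimate by an equivalent multiscale argument.
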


\begin{proof}[Proof of~\eqref{eqn::IsingAnnulusDrivingCvg}]
	This is immediate from~\eqref{eqn::IsingAnnulusCvg} because $\nu_{\mathrm{Ising}\cond p}^{(\bs{\theta})}$ is the induced measure on the driving function $(\bs{\theta}_t, t\ge 0)$ under $\PP_{\mathrm{Ising}\cond p}^{(\unitD;\bs{x})}$ and $\nu_{\kappa=3|*}^{(\bs{\theta})}$ is the induced measure on the driving function under $\PP_{\kappa=3\cond *}^{(\bs{\theta})}$. 
\end{proof}

\subsection{Proof of Lemmas~\ref{lem::compare_arm_events} and~\ref{lem::Ising_coupling}} 
\label{sec::technical}
The goal of this section is to prove Lemmas~\ref{lem::compare_arm_events} and~\ref{lem::Ising_coupling}. We first collect some properties of critical Ising model that we will use: a strong RSW estimates Lemma~\ref{lem::RSW}, spatial mixing property Lemma~\ref{lem::spatial} and well-separatedness of arm events Lemma~\ref{lem::Ising_well_separa}.

A discrete topological rectangle $(\Omega;a,b,c,d)$ is a discrete polygon with $4$ marked points. We denote by $\mathrm{d}_{\Omega}\left(\left(ab\right),\left(cd\right)\right)$ the discrete extremal distance between $(ab)$ and $(cd)$ in $\Omega$; see~\cite[Section~6]{ChelkakRobust}. The discrete extremal distance is uniformly comparable to its continuous counterpart--the classical extremal distance. 
The topological rectangle $(\Omega;a,b,c,d)$ is crossed by $\oplus$ in an Ising configuration if there exists a path of $\oplus$ connecting $(ab)$ to $(cd)$ in $\Omega$. We denote this event by $\{(ab)\xlongleftrightarrow{\oplus}(cd)\}$. We will use the following strong RSW estimate. 

\begin{lemma}{\cite[Corollary~1.7]{ChelkakDuminilHonglerCrossingprobaFKIsing}} \label{lem::RSW}
	For each $L>0$, there exists $c(L)>0$ such that the following holds: for any discrete topological rectangle $(\Omega;a,b,c,d)$ with $\mathrm{d}_{\Omega}\left((ab),(cd)\right)\leq L$, we have
	\begin{equation*}
		\mu_{\beta_c,\Omega}^{\mathrm{mixed}} \left[(ab)\xlongrightarrow{\oplus}(cd)\right]\geq c(L),
	\end{equation*}
	where the boundary conditions are free on $(ab)\cup (cd)$ and $\ominus$ on $(bc)\cup (da)$. 
\end{lemma}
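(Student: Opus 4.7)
The statement in question is a verbatim citation of a strong RSW estimate from \cite{ChelkakDuminilHonglerCrossingprobaFKIsing}, not a result to be proved from scratch within this paper. My "proof plan" therefore reduces to explaining how I would verify the cited statement if pushed to, and what the essential ingredients are.

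The plan is to invoke the result directly as a black box; it is Corollary~1.7 of \cite{ChelkakDuminilHonglerCrossingprobaFKIsing}, which applies to mixed $\ominus/\mathrm{free}$ boundary conditions. The only step needed in this paper is to identify the quantities: $(\Omega;a,b,c,d)$ is a discrete topological rectangle in the sense used in that reference, the discrete extremal distance $\mathrm{d}_\Omega((ab),(cd))$ matches their definition (which, by~\cite[Section~6]{ChelkakRobust}, is uniformly comparable to the continuous extremal distance), and the event $\{(ab)\xlongleftrightarrow{\oplus}(cd)\}$ is the primal crossing event they consider.

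If I were asked to sketch \emph{why} such an estimate holds, my strategy would be threefold. First, pass from the Ising model with mixed $\ominus/\mathrm{free}$ boundary to the FK-Ising representation via the Edwards--Sokal coupling, so that an $\oplus$-path in the Ising model corresponds to a primal FK-Ising crossing under appropriate wired/free boundary assignments; the monotonicity in boundary conditions (FKG and the comparison between boundary conditions for FK-Ising) then lets one compare the mixed-boundary crossing probability with the uniformly bounded one. Second, apply the classical RSW machinery (square-root trick, FKG, and gluing of crossings) to promote a weak crossing estimate into one depending only on the extremal distance modulus $L$. Third --- and this is the deep input that makes the estimate \emph{strong}, i.e.\ uniform in the shape of $\Omega$ and down to microscopic scales --- invoke the discrete holomorphic fermionic observable of Chelkak--Smirnov, which provides quantitative control of crossing probabilities that is stable under conformal equivalence of the discretized rectangles.

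The main obstacle, and the reason we appeal to the cited paper rather than reprove this, is precisely that uniformity over all discrete topological rectangles with only a bound on the extremal distance is not accessible by soft RSW arguments alone: one genuinely needs the discrete complex analytic input from \cite{ChelkakDuminilHonglerCrossingprobaFKIsing}. Since no quantitative improvement is required for our use in the proofs of Lemmas~\ref{lem::compare_arm_events} and~\ref{lem::Ising_coupling}, citing Corollary~1.7 there is sufficient.
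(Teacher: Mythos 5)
Your proposal matches the paper exactly: the paper states this lemma as a direct citation of Corollary~1.7 of \cite{ChelkakDuminilHonglerCrossingprobaFKIsing} and offers no independent proof, which is precisely what you do. Your supplementary sketch of the FK-Ising/RSW/discrete-holomorphicity ingredients is accurate but not required.
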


With Lemma~\ref{lem::RSW}, it is immediate to prove Lemma~\ref{lem::Ising_coupling}
\begin{proof}[Proof of Lemma~\ref{lem::Ising_coupling}]
	This can be proved in the same way as the proof of~\cite[Proposition~3.1]{GarbanPeteSchrammPivotalClusterInterfacePercolation} for percolation model, with RSW estimate for critical percolation model replaced by the RSW estimates Lemma~\ref{lem::RSW} for critical Ising model. 
	\end{proof}

\begin{lemma}\label{lem::spatial}
	There exist universal constants $c_5,c_6>0$ such that for any $0<r_1<r_2<1$, any boundary conditions $\tau,\xi$ such that $\tau=\xi$ on $\Gamma_{1}^{\delta}$, and any event $E$ depending on the status of vertices in $\Lambda_{r_2}^{\delta}$, we have the following estimate for all $\delta>0$ small enough: 
	\begin{align*}
		\left | 1-\frac{\mu_{\beta_c, \Lambda_{r_1}^{\delta}}^{\tau}[E]}{ \mu_{\beta_c, \Lambda_{r_1}^{\delta}}^{\xi}[E]}\right |\leq c_5\left(\frac{r_1}{r_2}\right)^{c_6}. 
	\end{align*}
\end{lemma}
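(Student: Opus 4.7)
The plan is the classical insulating circuit strategy for critical Ising, combining Lemma~\ref{lem::RSW}, FKG, and the domain Markov property. The heuristic is that an Ising $\oplus$-circuit inside the annular region $\{r_1\le|z|\le r_2\}$ insulates $\Lambda_{r_2}^\delta$ from the differing boundary data on $\Gamma_{r_1}^\delta$, and such a circuit fails to exist with probability polynomially small in the conformal modulus (so the bound should be read as $c_5(r_1/r_2)^{c_6}$).

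First I would decompose $\{r_1\le|z|\le r_2\}$ into $K=\lfloor\log_2(r_2/r_1)\rfloor$ concentric dyadic sub-annuli $A_k=\{2^kr_1\le|z|\le 2^{k+1}r_1\}$, each of bounded modulus. For every $k$, let $\mathcal{C}_k$ be the event that $A_k$ contains a closed circuit of $\oplus$-spins surrounding the origin. Applying Lemma~\ref{lem::RSW} to four overlapping topological rectangles inside $A_k$ and gluing the crossings via FKG yields a universal constant $c_0\in(0,1)$ with $\mu^{\eta}_{\beta_c,\Lambda_{r_1}^\delta}[\mathcal{C}_k]\ge c_0$, uniformly over every boundary condition $\eta$ (the worst-case boundary data on $\Gamma_1^\delta\cup\Gamma_{r_1}^\delta$ are handled by monotonicity in boundary conditions). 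Iterating from the outside in, I obtain
\[
\mu^{\eta}_{\beta_c,\Lambda_{r_1}^\delta}\Bigl[\,\bigcap_{k=0}^{K-1}\mathcal{C}_k^{\,c}\,\Bigr]\;\le\;(1-c_0)^K\;\le\;c_5\bigl(r_1/r_2\bigr)^{c_6}
\]
for suitable constants $c_5,c_6>0$. Let $\mathcal{C}$ denote the innermost such circuit (defined on $\bigcup_k\mathcal{C}_k$). By the domain Markov property, for every admissible $\gamma$ and every spin configuration $\omega$ on $\Lambda_{r_2}^\delta$,
\[
\mu^\tau[\sigma_{\Lambda_{r_2}^\delta}=\omega\mid\mathcal{C}=\gamma]=\mu^\xi[\sigma_{\Lambda_{r_2}^\delta}=\omega\mid\mathcal{C}=\gamma],
\]
because conditionally on $\{\mathcal{C}=\gamma\}$ the spins strictly outside $\gamma$ form an Ising model with $\oplus$-boundary on $\gamma$ and the common outer data $\tau|_{\Gamma_1^\delta}=\xi|_{\Gamma_1^\delta}$.

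The main obstacle is upgrading the additive total-variation control that this coupling immediately provides to the stated multiplicative bound on the ratio $\mu^\tau[E]/\mu^\xi[E]$. The cleanest route is a two-scale version of the construction: apply the insulating-circuit argument in $\{r_1\le|z|\le\sqrt{r_1r_2}\}$ to control the law of $\mathcal{C}$ itself, showing that $\mu^\tau[\mathcal{C}=\gamma]/\mu^\xi[\mathcal{C}=\gamma]$ is uniformly $1+O((r_1/r_2)^{c_6'})$ on the support, and apply it independently in $\{\sqrt{r_1r_2}\le|z|\le r_2\}$ to decouple $\omega$ from $\mathcal{C}$. Composing the two pointwise ratio estimates shows that the Radon-Nikodym derivative of the marginals $\mu^\tau|_{\Lambda_{r_2}^\delta}$ and $\mu^\xi|_{\Lambda_{r_2}^\delta}$ is uniformly close to $1$ on its support, and integrating against $\mathbf{1}_E$ then yields the lemma after a relabeling of $c_5$ and $c_6$. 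Every step apart from this final pointwise ratio upgrade is a direct application of Lemma~\ref{lem::RSW} and FKG monotonicity.
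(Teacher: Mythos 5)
Your first paragraph is the right skeleton and matches what the paper's one-sentence proof points to: the paper simply cites Eq.~(50) of Duminil-Copin--Hongler--Nolin, whose proof is exactly the combination of dyadic annuli, RSW-produced insulating circuits, and the domain Markov property that you describe. You are also correct that the exponent in the statement must be read as $(r_1/r_2)^{c_6}$ rather than $(r_2/r_1)^{c_6}$; this is how the lemma is actually invoked in~\eqref{eqn::IsingAsy_aux6}--\eqref{eqn::IsingAsy_aux7}.

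The step you yourself flag as the main obstacle is, however, where the argument breaks, and the two-scale fix you propose does not close it. You assert that the insulating-circuit argument in $\{r_1\le|z|\le\sqrt{r_1r_2}\}$ yields $\mu^{\tau}[\mathcal{C}=\gamma]/\mu^{\xi}[\mathcal{C}=\gamma]=1+O((r_1/r_2)^{c})$ uniformly on the support. This is false: take $\tau\equiv\oplus$ and $\xi\equiv\ominus$ on $\Gamma_{r_1}^{\delta}$ and let $\gamma$ be the circuit of vertices adjacent to $\Gamma_{r_1}^{\delta}$; the event $\{\mathcal{C}=\gamma\}$ forces all of these $\asymp r_1/\delta$ spins to be $\oplus$, and its probabilities under $\tau$ and $\xi$ differ by a factor exponentially large in $r_1/\delta$. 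The law of the innermost screening circuit is precisely the kind of quantity that is \emph{not} ratio-stable under a change of the adjacent boundary data. If you instead place $\mathcal{C}$ far from $\Gamma_{r_1}^{\delta}$, say so that $\{\mathcal{C}=\gamma\}$ depends only on spins at distance at least $\sqrt{r_1r_2}$ from the origin, then the asserted ratio bound for the events $\{\mathcal{C}=\gamma\}$ is an instance of the lemma you are trying to prove (with $r_2$ replaced by $\sqrt{r_1r_2}$), and the argument is circular. The standard resolution --- Alexander's ratio weak mixing, which is the argument behind the cited Eq.~(50) --- leaves the circuit weights $\mu^{\tau}[\mathcal{C}=\gamma]$ alone and instead proves that the common conditional probabilities $\nu_{\gamma}[E]=\mu[E\mid\mathcal{C}=\gamma]$ agree up to a factor $1+O((r_1/r_2)^{c})$ uniformly over admissible circuits $\gamma$; this is established by a recursion over scales in which the ratio defect $\sup_{\gamma,\gamma'}\nu_{\gamma}[E]/\nu_{\gamma'}[E]-1$ contracts by a fixed factor each time a dyadic annulus carrying a screening circuit with uniformly positive probability is crossed. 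A related repair is needed for your remainder term: $\mu^{\tau}[E\cap\{\text{no circuit}\}]$ must be bounded by $(r_1/r_2)^{c}\mu^{\tau}[E]$ and not merely by $(r_1/r_2)^{c}$, which one gets by writing it as $\mu^{\tau}[E]\,\mu^{\tau}[\text{no circuit}\mid E]$ and noting that the RSW circuit construction is uniform over the boundary data induced on $\Gamma_{r_2}^{\delta}$ by the conditioning on $E$, rather than by the crude union bound you use.
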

\begin{proof}
	The proof is essentially the same as that of~\cite[Proposition~5.11]{DuminilCopinHonglerNolinRSWFKIsing}. 
\end{proof}

We now introduce arm events and well-separated arm events, which are important for the proof of Lemma~\ref{lem::compare_arm_events}. Fix $0<r_1<r_2\leq 1$ and consider the discrete annular domain $\Lambda_{r_1}^{\delta}\setminus \Lambda_{r_2}^{\delta}$. A simple path of $\oplus$ or of $\ominus$ connecting $\Gamma_{r_2}^{\delta}$ to $\Gamma_{r_1}^{\delta}$ is called an \textit{arm}. We define $\mathcal{C}_{\delta}(r_1,r_2)$ to be the event that there are $2N$ disjoint arms $(\gamma_j^{\delta})_{1\leq j\leq 2N}$ connecting $\Gamma_{r_2}^{\delta}$ to $\Gamma_{r_1}^{\delta}$ in the discrete annular domain $\Lambda_{r_1}^{\delta}\setminus \Lambda_{r_2}^{\delta}$ with alternating spins $\oplus/\ominus/\cdots/\oplus/\ominus$, where $(\gamma_{j}^{\delta})_{1\leq j\leq 2N}$ are in counterclockwise order.

A \textit{landing sequence} $(I_j)_{1\leq j\leq 2N}$ is a sequence of disjoint subintervals on $\partial\mathbb{D}$ in counterclockwise order. We denote by $z(I_j)$ the center of $I_j$. We say $(I_j)_{1\leq j\leq 2N}$ is \textit{$\epsilon$-separated} if:
\begin{enumerate}[label=\textnormal{(\arabic*):}, ref=\arabic*]
	\item the intervals are at distance at least $2\epsilon$ from each other;
	\item for each $I_j$, the length of $I_j$ is at least $2\epsilon$. 
\end{enumerate}
We let $\omega_j=\oplus$ if $j$ is odd and $\omega_j=\ominus$ if $j$ is even. Given two $\epsilon$-separated landing sequences $(I_j)_{1\leq j\leq 2N}$ and $(I'_j)_{1\leq j\leq 2N}$. We say that the arms $(\gamma_{k}^{\delta})_{1\leq j\leq 2N}$ are \textit{$\epsilon$-well-separated} with landing sequence $(I_j)_{1\leq j\leq 2N}$ on $\Gamma_{r_2}^{\delta}$ and landing sequence $(I'_j)_{1\leq j\leq 2N}$ on $\Gamma_{r_1}^{\delta}$ with alternating spins if:
\begin{enumerate}[label=\textnormal{(\arabic*):}, ref=\arabic*]
	\item \label{item::res_1} for each $j$, the arm $\gamma_j^{\delta}$  connects $r_2I_j$ to $r_1I'_j$; 
	\item \label{item::res_2} for each $j$, the arm $\gamma_j^{\delta}$ can be $\omega_j$-connected to distance $\epsilon r_1$ of $\Gamma_{r_1}^{\delta}$ inside 
	\[\{z=rz': z'\in I'_j, r\in (r_1-\epsilon r_1,r_1+\epsilon r_1)\}; \] 
	\item for each $j$, the arm $\gamma_j^{\delta}$ can be $\omega_j$-connected to distance $\epsilon r_2$ of $\Gamma_{r_2}^{\delta}$ inside 
	\[\{z=rz': z'\in I_j, r\in (r_2-\epsilon r_2,r_2+\epsilon r_2)\}.\]
\end{enumerate}
We denote this event by $\mathcal{C}_{\delta}^{I/I'}(r_1,r_2;\epsilon)$. 

\begin{lemma}\label{lem::Ising_well_separa}
	Fix $\epsilon>0$. There exist two constants $c_7,c_8>0$ that only depend on $\epsilon$ and $N$ so that for any two $\epsilon$-separated landing sequences $(I_j)_{1\leq j\leq 2N}$, $(I'_j)_{1\leq j\leq 2N}$, for any $0<2r_0<r_1<r_2<r_3/2$ and for any boundary condition $\tau$ on $\Gamma_{r_0}\cup \Gamma_{r_3}$, we have the following estimate for all $\delta>0$ small enough:
	\begin{equation*}
		c_7\leq \frac{		\mu^{\tau}_{\beta_c, \Lambda_{r_0}^{\delta}\setminus \Lambda_{r_3}^{\delta}}\left[\mathcal{C}_{\delta}(r_1,r_2)\right]}{		\mu^{\tau}_{\beta_c, \Lambda_{r_0}^{\delta}\setminus \Lambda_{r_3}^{\delta}}\left[\mathcal{C}^{I/I'}_{\delta}(r_1,r_2;\epsilon)\right]}\leq c_8.
	\end{equation*}
\end{lemma}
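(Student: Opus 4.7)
The plan is to prove this well-separatedness estimate by a standard Kesten-type argument for the critical Ising model, following the lines of~\cite{ChelkakDuminilHonglerCrossingprobaFKIsing}. The lower bound on the ratio is trivial with $c_7=1$ since $\mathcal{C}^{I/I'}_\delta(r_1,r_2;\epsilon)\subset\mathcal{C}_\delta(r_1,r_2)$. The real content is the upper bound, equivalent to
\begin{equation*}
\mu^\tau_{\beta_c,\Lambda_{r_0}^\delta\setminus\Lambda_{r_3}^\delta}\!\left[\mathcal{C}^{I/I'}_\delta(r_1,r_2;\epsilon)\right]\;\geq\; c(\epsilon,N)\cdot\mu^\tau_{\beta_c,\Lambda_{r_0}^\delta\setminus\Lambda_{r_3}^\delta}\!\left[\mathcal{C}_\delta(r_1,r_2)\right].
\end{equation*}

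The first step is to construct, inside each of the two thin annular shells $S_1:=\Lambda_{r_1(1-\epsilon)}^\delta\setminus\Lambda_{r_1(1+\epsilon)}^\delta$ and $S_2:=\Lambda_{r_2(1-\epsilon)}^\delta\setminus\Lambda_{r_2(1+\epsilon)}^\delta$, a deterministic ``forcing'' pattern of spins: in the radial sector corresponding to each $I_j$ (resp.\ $I'_j$) one creates an $\omega_j$-crossing joining the two circular boundaries of $S_2$ (resp.\ $S_1$), flanked by $\omega_j$-paths in sub-sectors so that any arm crossing the shell at the correct location is forced to merge with it, and such that the remaining angular gaps between consecutive sectors contain $\omega_j$- and $\omega_{j+1}$-arcs preventing the arms from wandering. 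Each component of this pattern lies in a topological rectangle whose discrete extremal distance is bounded by a constant depending only on $\epsilon$ and $N$, so by iterating Lemma~\ref{lem::RSW} (which is uniform in boundary conditions) the probability of the pattern in each shell is bounded below by $c(\epsilon,N)>0$, independently of $\delta$.

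The second step is to combine these local patterns with a realization of $\mathcal{C}_\delta(r_1,r_2)$. I would explore the $2N$ outermost alternating arms inward from $\Gamma_{r_2}^\delta$ and outward from $\Gamma_{r_1}^\delta$, stopping each exploration when it first reaches the outer side of $S_2$ or $S_1$ respectively; this leaves two simply connected regions adjacent to $\Gamma_{r_2}^\delta$ and $\Gamma_{r_1}^\delta$ inside which the conditional measure is still Ising with some (random) boundary condition inherited from the arms and from $\tau$. On this conditional measure I apply the forcing construction of the previous paragraph, and then use monotonicity (FKG for the $\oplus$-arms, and the symmetric statement for $\ominus$-arms, exactly as in~\cite{ChelkakDuminilHonglerCrossingprobaFKIsing}) to glue the forced pattern to the explored arms, producing a configuration that lies in $\mathcal{C}^{I/I'}_\delta(r_1,r_2;\epsilon)$ and hence the required lower bound.

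The main obstacle will be uniformity with respect to the exterior boundary condition $\tau$ on $\Gamma_{r_0}^\delta\cup\Gamma_{r_3}^\delta$ and with respect to the random interface boundary revealed by the exploration. The hypothesis $2r_0<r_1$ and $r_2<r_3/2$ is tailored precisely for this: it places buffer annuli between the shells $S_1,S_2$ and the carriers of $\tau$, so that Lemma~\ref{lem::spatial} lets me replace $\tau$ by a canonical boundary condition at the cost of a multiplicative constant depending only on $\epsilon$; alternatively, the RSW estimate of Lemma~\ref{lem::RSW} is already uniform in boundary conditions and can be iterated through an additional layer of topological rectangles between each shell and its buffer annulus, yielding constants $c_7,c_8$ depending only on $\epsilon$ and $N$.
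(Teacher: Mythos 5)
Your proposal is correct and follows essentially the same route as the paper, which simply invokes the arm-separation argument of \cite[Proposition~5.6]{ChelkakDuminilHonglerCrossingprobaFKIsing} with the RSW estimate (Lemma~\ref{lem::RSW}) as the key ingredient; your reconstruction (trivial lower bound from inclusion, then the Kesten-type forcing of landing patterns in thin shells glued to the explored outermost arms via RSW and monotonicity) is exactly that argument spelled out.
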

\begin{proof}
The proof is exactly the same as the proof of~\cite[Proposition~5.6]{ChelkakDuminilHonglerCrossingprobaFKIsing}. The key ingredient is the RSW estimates Lemma~\ref{lem::RSW}. 
\end{proof}

From now on, we fix a $\frac{1}{8N}$-separated landing sequence $(I_j)_{1\leq j\leq 2N}$ and write $\mathcal{C}_{\delta}^{I}(r_1,r_2):=\mathcal{C}_{\delta}^{I/I}(r_1,r_2;\frac{1}{8N})$. For fixed $r\in (0,1)$, we let $(R_j^r)_{1\leq j\leq 2N}$ be $2N$ disjoint topological rectangles (that is, polygons with four marked points) in $\{z: r<|z|<1\}$ so that for each $j$, 
\[R_j^r\cap \{z:r<|z|<(1+\frac{1}{8N})r\}=\{z=r'z': z'\in I_j, r'\in (r,(1+\frac{1}{8N})r)\}\]
and that there are $2$ sides of $R_j^r$ lying on $(x_j,x_{j+1})$ and $rI_j$, respectively. Now we are ready to prove Lemma~\ref{lem::compare_arm_events}.

\begin{proof}[Proof of Lemma~\ref{lem::compare_arm_events}]
We adopt the same notations as in the proof of~\eqref{eqn::IsingAnnulusCvg}.  We denote
\[E_p^{\delta}=\{\eta_1^{\delta},\ldots,\eta_{2N}^{\delta}\text{ all hit }\Gamma_{e^{-p}}^{\delta}\}.\]
According to the convergence of discrete interfaces under $\PP^{\delta}_{\LA}$ and $\PP^{\delta}_{\Omega}$, we have 
\begin{align}
			\lim_{\delta\to 0}\mathbb{P}^{\delta}_{\LA}[E_p^{\delta}]=\PP_{\mathrm{Ising}}^{(\mathbb{A}_p;\bs{x})}[E_p], \qquad
					\lim_{\delta\to 0}\PP^{\delta}_{\Omega}[E_p^{\delta}]=\PP_{\Ising}^{(\unitD;\bs{x})}[E_p]. \label{eqn::IsingAsy_aux5}
\end{align}
Now let us compare $\mathbb{P}^{\delta}_{\LA}[E_p^{\delta}]$ and $\PP_{\Omega}^{\delta}[E_p^{\delta}]$ in the discrete.

First, we choose $C>1$ so that for constants $c_5$ and $c_6$ in Lemma~\ref{lem::spatial}, we have 
\begin{equation}\label{eqn::IsingAsy_aux7}
	c_5(\frac{1}{C})^{c_6}\leq \frac{1}{2}.
\end{equation}
Without loss of generality, We may assume that $p$ is large enough so that $Ce^{-p}<\frac{1}{100}$.	From Lemma~\ref{lem::spatial}, when $\delta>0$ is small enough, we have 
\begin{equation}\label{eqn::IsingAsy_aux6}
	1/2\le 1-c_5C_p^{-c_6}\leq \frac{\PP^{\delta}_{\LA}[\eta_1^{\delta},\ldots,\eta_{2N}^{\delta}\text{ all hit }\Gamma_{Ce^{-p}}^{\delta}]}{\PP_{\Omega}^{\delta}[\eta_1^{\delta},\ldots,\eta_{2N}^{\delta}\text{ all hit }\Gamma_{Ce^{-p}}^{\delta}]}\leq 1+c_5C_p^{-c_6}\le 3/2.
\end{equation}

Second, we prove that there exists a constant $c_9>0$ that depends only on $N$ and $C$ in~\eqref{eqn::IsingAsy_aux7} so that 
\begin{equation}\label{eqn::IsingAsy_aux8}
	1\leq\frac{\PP^{\delta}_{\LA}[\eta_1^{\delta},\ldots,\eta_{2N}^{\delta}\text{ all hit }\Gamma_{Ce^{-p}}^{\delta}]}{\PP^{\delta}_{\LA}[\eta_1^{\delta},\ldots,\eta_{2N}^{\delta}\text{ all hit }\Gamma_{e^{-p}}^{\delta}]}\leq c_{9},
\end{equation}
\begin{equation}\label{eqn::IsingAsy_aux9}
	1\leq\frac{\PP^{\delta}_{\Omega}[\eta_1^{\delta},\ldots,\eta_{2N}^{\delta}\text{ all hit }\Gamma_{Ce^{-p}}^{\delta}]}{\PP^{\delta}_{\Omega}[\eta_1^{\delta},\ldots,\eta_{2N}^{\delta}\text{ all hit }\Gamma_{e^{-p}}^{\delta}]}\leq c_{9}.
\end{equation}
The left hand sides follow immediately from the fact that $C>1$. Now let us prove right hand sides. Note that 
\begin{align}
	\PP^{\delta}_{\LA}[\eta_1^{\delta},\ldots,\eta_{2N}^{\delta}\text{ all hit }\Gamma_{Ce^{-p}}^{\delta}]=&\PP^{\delta}_{\LA}[\mathcal{C}_{\delta}(Ce^{-p},1)]\notag\\
	\leq& \PP^{\delta}_{\LA}[\mathcal{C}_{\delta}(Ce^{-p},\frac{1}{10})]\notag\\
	\leq & c_8\PP^{\delta}_{\LA}[\mathcal{C}^{I}_{\delta}(Ce^{-p},\frac{1}{10})] , \label{eqn::IsingAsy_aux10}
\end{align}
where the last inequality is due to Lemma~\ref{lem::Ising_well_separa}.
Conditional on the event $\mathcal{C}^{I}_{\delta}(Ce^{-p},\frac{1}{10})$, for each $j$, denote by $\mathrm{Cross}_j^{(1)}$ the event that there exist $2$ paths with spin $\omega_j$ so that one path connects $\frac{1}{10}I_j$ to $\partial\mathbb{D}$ inside $R_j^{\frac{1}{10}}$ and that the other path connectes $\{z=rz_1(j): r\in (\frac{1}{10},\frac{1}{10}(1+\frac{1}{8N}))\}$ to $\{z=rz_2(j): r\in (\frac{1}{10},\frac{1}{10}(1+\frac{1}{8N}))\}$ inside
\[\{z=rz': z'\in I_j, r\in (\frac{1}{10},\frac{1}{10}(1+\frac{1}{8N}))\},\]
where $z_1(j),z_2(j)$ are two end points of $I_j$. Conditional on the event $\mathcal{C}^{I}_{\delta}(Ce^{-p},\frac{1}{10})$, for each $j$, denote by $\mathrm{Cross}_j^{(2)}$ the event that there exist $2$ paths with spin $\omega_j$ so that one path connects $Ce^{-p}I_k$ to $e^{-p}I_k$ inside $\{z=rz': z\in I_k, r\in (e^{-p},Ce^{-p})\}$ and that the other path connects $\{z=rz_1(j): r\in (Ce^{-p}(1-\frac{1}{8N}),Ce^{-p})\}$ to $\{z=rz_2(j): r\in (Ce^{-p}(1-\frac{1}{8N}),Ce^{-p})\}$ inside
\[\{z=rz': z'\in I_j, r\in (Ce^{-p}(1-\frac{1}{8N}),Ce^{-p})\}.\]
Note that
\begin{equation}\label{eqn::IsingAsy_aux11}
	\mathcal{C}^{I}_{\delta}(Ce^{-p},\frac{1}{10})\bigcap\left(\bigcap_{j=1}^{2N}\left(\mathrm{Cross}_j^{(1)}\cap \mathrm{Cross}_j^{(2)}\right)\right) \subseteq \mathcal{C}_{\delta}(e^{-p},1).
\end{equation}
Thanks to FKG inequality and RSW estimates for Ising model, we find a universal constant $c>0$ so that the conditional probability
\begin{equation}\label{eqn::IsingAsy_aux12}
	\PP^{\delta}_{\LA}\left[\bigcap_{j=1}^{2N}\left(\mathrm{Cross}_j^{(1)}\cap \mathrm{Cross}_j^{(2 )}\right)|\mathcal{C}^{I}_{\delta}(Ce^{-p},\frac{1}{10})\right]\geq c.
\end{equation}
Combining~\eqref{eqn::IsingAsy_aux10}-\eqref{eqn::IsingAsy_aux12}, we obtain the right hand side of~\eqref{eqn::IsingAsy_aux8}. Similarly, we can obtain the right hand side of~\eqref{eqn::IsingAsy_aux9}. Thus, we obatin~\eqref{eqn::IsingAsy_aux8} and~\eqref{eqn::IsingAsy_aux9}. 

Combining~\eqref{eqn::IsingAsy_aux6}-\eqref{eqn::IsingAsy_aux9} with~\eqref{eqn::IsingAsy_aux5}, we obtain~\eqref{eqn::IsingAsy_aux2} and complete the proof.
\end{proof}

%

{\small\newcommand{\etalchar}[1]{$^{#1}$}
}

\end{document}